\definecolor{mygreen}{rgb}{0,0.6,0}
\definecolor{mygray}{rgb}{0.5,0.5,0.5}
\definecolor{mymauve}{rgb}{0.58,0,0.82}
\tiny\color{mygray}, 
\def\barint_#1{\mathchoice
{\mathop{\vrule width 6pt height 3 pt depth -2.5pt
\kern -8.8pt \intop}\nolimits_{#1}}%
{\mathop{\vrule width 5pt height 3 pt depth -2.6pt
\kern -6.5pt \intop}\nolimits_{#1}}%
{\mathop{\vrule width 5pt height 3 pt depth -2.6pt
\kern -6pt \intop}\nolimits_{#1}}%
{\mathop{\vrule width 5pt height 3 pt depth -2.6pt
\kern -6pt \intop}\nolimits_{#1}}}
\numberwithin{equation}{section}
\newcommand*{\CopyCounter}[2]{
  \expandafter\def\csname c@#2\endcsname{\csname c@#1\endcsname}
  \expandafter\def\csname p@#2\endcsname{\csname p@#1\endcsname}
  \expandafter\def\csname the#2\endcsname{\csname the#1\endcsname}}
\newtheorem{theorem}{Theorem}[section]
\newtheorem{assumption}{Assumption}[section]
\numberwithin{Theorem}{section}
\theoremstyle{plain}
\theoremstyle{definition}
\newtheorem{remark}[Remark]{Remark}
\newif\ifnever\neverfalse
\newcommand{\be}{\mathbf{e}}
\newcommand{\bw}{\mathbf{w}}
\newcommand{\ba}{\mathbf{a}}
\newcommand{\balpha}{\boldsymbol\alpha}
\newcommand{\D}{\mathcal{D}}
\newcommand{\I}{\mathcal{I}}
\newcommand{\x}{\mathbf{x}}
\newcommand{\xhat}{\hat{\x}}
\newcommand{\xbar}{\bar{x}}
\newcommand{\y}{\mathbf{y}}
\newcommand{\z}{\mathbf{z}}
\newcommand{\Bf}{\mathbf{f}}
\newcommand{\zhat}{\hat{\z}}
\newcommand{\w}{\mathbf{w}}
\newcommand{\X}{\mathcal{X}}
\newcommand{\Z}{\mathbf{Z}}
\newcommand{\xtilde}{\tilde{\mathbf{x}}}
\newcommand{\ort}{\text{ort}}
\newcommand{\M}{\mathcal{M}}
\newcommand{\f}{\textbf{f}}
\newcommand{\tildeU}{\tilde{U}}
\newcommand{\bC}{\bar{C}}
\newcommand{\tildeu}{\tilde{u}}
\newcommand{\GroupNote}[1]{%
}
\newcommand{\marginfix}{
\setlength{\parskip}{0.01cm}
\setlength{\textwidth}{6.0in}
\setlength{\oddsidemargin}{-0.0 in}
\setlength{\evensidemargin}{0.0 in}
\setlength{\topmargin}{-0.5in}
\setlength{\textheight}{9.0 in}
}
\begin{document}


\vspace*{1cm}
\centerline{\Large\textbf{Piecewise-Deterministic Optimal Path-Planning}}

\vspace*{.1in}

\renewcommand*{\thefootnote}{\fnsymbol{footnote}}

{\Large
\centerline{Z. Shen\footnotemark[1] and A. Vladimirsky\footnotemark[1]
\footnotetext[1]{\sc Supported in part by the National Science Foundation grant DMS-1016150.}
}}

\renewcommand*{\thefootnote}{\arabic{footnote}}

{\large
\vspace*{.1in}
\centerline{Center for Applied Mathematics and Department of Mathematics}
\vspace{-.05cm}
\centerline{Cornell University, Ithaca, NY 14853}
}


\vspace*{.1in}
\begin{abstract}
\noindent
We consider piecewise-deterministic optimal control problems in which the environment randomly switches among several deterministic modes, and the goal is to optimize the expected cost up to the termination while taking the likelihood of future mode-switches into account.  The dynamic programming approach yields a weakly-coupled system of Hamilton-Jacobi-Bellman PDEs satisfied by the value functions of individual modes.    We derive and implement semi-Lagrangian and Eulerian numerical schemes for that system.  We further recover simpler, ``asymptotically optimal'' controls and test their performance in non-asymptotic regimes.  Our approach is illustrated on a simple anisotropic path-planning problem: the time-optimal control for a boat affected by randomly switching winds.
\end{abstract}

\vspace*{.1in}


\section{Introduction}
Dynamic programming is a popular approach for solving continuous optimal control problems with a moderate number of degrees of freedom $d$.  The value function is defined on a $d$-dimensional domain to encode the minimum-cost-till-termination for each initial configuration.  Bellman's optimality principle 
allows to recover the value function by solving the corresponding Hamilton-Jacobi-Bellman (HJB) partial differential equation \cite{bardi2008optimal}.  If the controlled process is deterministic, that PDE involves only the gradient of the value function.  On the other hand, the most common setting for the stochastic optimal control is based on the assumption that the deterministic dynamics is perturbed by the additive Brownian motion, yielding second partial derivatives in the equation \cite{fleming2006controlled}.

Piecewise-deterministic controlled systems \cite{davis1986control, verms1985optimal} occupy an intermediate niche and are useful for encoding non-diffusive stochastic perturbations.  In this framework, the assumption is that both the cost and dynamics remain deterministic except for occasional transitions to different deterministic modes.  The value functions of different modes satisfy a weakly-coupled system of first-order HJB equations, which is more expensive computationally.
Nevertheless, this formulation is extensively used to model
the optimal control of manufacturing processes
(e.g., \cite{akella1986optimal, bielecki1988optimality, olsder1980time, sethi2012hierarchical}), production/maintenance planning \cite{boukas1990optimal},
economic growth \& global climate change modeling \cite{haurie2006stochastic},
as well as multi-generational games \cite{haurie2005multigenerational}.
More generally, they can be used to optimize the expected performance of any {\it stochastic switching system}, provided the total number of possible environment configurations is moderate, the environment changes are random in nature, and the planner becomes aware of the exact change as soon as it happens.  We believe that they can be particularly useful in path-planning problems, where describing stochastic perturbations by additive Brownian motion is often unsatisfactory \cite{yershov2013simplicial}.
Another related problem is the deterministic optimal control of randomly-terminated processes recently considered in \cite{andrews2013deterministic}.

For a concrete example, consider a generalization of the classical ``Zermelo's navigation problem'' \cite{zlobec2001zermelo}.
We will assume that, in the absence of winds, a rowboat can move with a (location-dependent) speed $s(\x)$.
In this case, the total time to the target $Q$ can be minimized by using the gradient descent in the value function $v$, defined by the Eikonal equation
$\|\nabla v\| s(\x) = 1$ with $v=0$ on $Q$.
If we now add wind which moves that boat with velocity $\w(\x)$, the boat dynamics becomes anisotropic.
For a unit vector $\ba$ encoding the direction in which we are currently rowing, the resulting boat velocity is $\Bf(\x,\ba) = s(\x) \ba + \w(\x)$, and the corresponding HJB equation is
$$\|\nabla v\| s(\x) - \w(\x) \cdot \nabla v = 1.$$
This equation can be solved by a variety of fast numerical methods for static HJB developed in the last 15 years \cite{sethian2001ordered, sethian2003ordered, alton2012ordered, mirebeau2014efficient, boue1999, kao2005fast, qian2007fast, bornemann2006, jeong2007interactive, gillberg2012new}.

But what if we have several different ``wind maps''  ( $\w_i(\x), \; i=1,\ldots,n$) commonly observed on this reservoir?
It is easy to define $\Bf_i$ for each $\w_i$ and then solve for each corresponding $v_i$ separately.  The relevant wind map $\w_i$ is selected when the boat starts moving, and the path is planned based on $v_i$. However, the wind direction might change before we reach the target.
An intuitive strategy is to react by switching to a new path (based on the new wind map) as soon as this change occurs.
But if we have statistics on how often such switches happen, this  can be used to improve our path planning.
Suppose $\lambda_{ij}  \geq 0$ characterizes the rate of switching from $\w_i$ to $\w_j$.
The new value function $u_i$ is defined as the minimal expected time to target if we start from $\x$
when the wind map $\w_i$ is in effect.  It is easy to show that such value functions must satisfy
 a {\it weakly-coupled} system:
\begin{equation}\label{eq:coupledEik}
\| \nabla u_i \| s(x) \; - \;
\w_i(\x) \cdot \nabla u_i \; = \; 1
\; - \;
\sum\limits_{j=1}^n \lambda_{ij} (u_i-u_j),
\qquad
i=1,\ldots,n.
\end{equation}
More general piecewise-deterministic systems are described in detail in section \ref{sec:statement} and the numerical methods for them are discussed in section \ref{sec:Numeric}.

Since solving a coupled system is more expensive,
the natural question is whether path-planning based on $v_i$'s would result in a significantly larger expected time of arrival.
We note that the controls based on $v_i$'s can be viewed as asymptotically optimal if all transition rates tend to zero.
A different approach to reduce the computational cost is based on a {\em single} value function $u^{\infty}$ describing the optimal behavior of a controller
who does not keep track of the current mode of dynamics \cite{yershov2013simplicial, bujorianu2004general}.  In section \ref{sec:asymptotic}, we explain why this strategy can be also viewed as ``asymptotically optimal'' when all rates of transition tend to infinity.  We then test the performance of both asymptotically optimal strategies with finite, positive
$\lambda_{ij}$ values.  In section \ref{sec:experiments}, we show that both of them result in a noticeable performance degradation, with the strategy based on $u^{\infty}$ yielding collisions with obstacles in a significant portion of Monte Carlo simulations.

Our example used for this benchmarking is particularly simple: a single rectangular obstacle and only two space-homogeneous wind-maps.  Yet, we show that the resulting dynamics based on the correct $u_i$'s has several subtle features.
E.g., it is well-known that in deterministic path planning, optimal trajectories are piecewise straight lines if the velocity is the same throughout the domain.
Our tests in section \ref{sec:experiments} show that this is no longer the case in a piecewise-deterministic setting.
In section \ref{sec:BM} we consider a different limiting case, with the number of modes/wind directions tending to infinity, and show the connection to degenerate elliptic HJB PDEs on $\mathbb{R}^{d+1}$.
We conclude by discussing several directions for future work in section \ref{sec:conclusions}.




\section{General Problem Statement}


Consider the following feedback control process in a bounded domain $\Omega \subset \mathbb{R}^d$:
\begin{align}
\left\{
\begin{array}{ccl}
\y'(t)&=&\textbf{f}_{m(t)}\Big( \y(t), \, \balpha \big( \y(t), m(t) \big) \Big),\\
\y(0)&=&\x,\\
m(0)&=& i. \label{eq:ODE}
\end{array}
\right.
\end{align}
The full state of the process is specified by its continuous component $\y(t)$ and the current ``mode'' of the dynamics
$m(t) \in \M=\{1,\cdots,n\}.$
The continuous component changes according to a differential equation, while
the mode $m(t)$ randomly switches as a
continuous-time Markov process on $\M$.
The right hand side in the ODE for the continuous component depends on the current state $(\y, m)$ and the current control
$\balpha ( \y, m )$
chosen from
a compact set of control values 
$A.$
We note that $\y(t)$ is a random variable which also depends on the initial conditions $(\x, i)$, and the control $\balpha \in \mathcal{A},$
but we 
omit these dependencies from the notation
for the sake of readability.

As usual, the goal is to find an {\em optimal feedback control function} from the admissible
set\\ $\mathcal{A} = \left\{ \text{ measurable } \balpha: \Omega\times \M \mapsto A \right\}$, but before we specify the optimization criteria,
we need to describe the mode-switching process $m(t)$.
It can be characterized by the transition rate matrix $\Lambda= \left(\lambda_{ij}\right)$, where $\lambda_{ij}$ is the transition rate from mode $i$ to mode $j$:
\begin{align}\label{eq:transition_rate}
\lim_{\tau\to 0}\frac{Pr(m(\tau)=j \, | \, m(0) = i) \, - \, \delta_{ij}}{\tau} = \lambda_{ij},
\qquad \; \delta_{ij} = \left\{
\begin{array}{cc}
1,&i=j\\
0,&i\neq j
\end{array}
\right.
\end{align}

It is easy to verify that the transition rate matrix should satisfy
\begin{align}\label{eq:transition_condition}
\left.
\begin{array}{cl}
\lambda_{ij}\geq 0, &\qquad \forall j\neq i;\\
\lambda_{ii} = -\sum\limits_{j\neq i}\lambda_{ij}, &\qquad \forall i.
\end{array}
\right.
\end{align}

We also recall several standard properties of continuous-time Markov processes useful in our context:
\begin{itemize}
\item At any time $t$, conditional on $m(t)=i$, the probability that the system stays in current mode for at least time $s$ is
$\exp(\lambda_{ii}s) = \exp(-\sum_{j\neq i}\lambda_{ij}s)$.
\item If we define
$p_{ij}(t)=Pr\left(m(t)=j \, | \, m(0)=i\right)$, the evolution of the probability matrix $P(t)=\left(p_{ij}(t)\right)$ is then described by
\[\frac{d}{dt}P(t) = \Lambda P(t) = P(t) \Lambda, \qquad P(0)=I.\]
Therefore, $P(t)=\exp(\Lambda t)$.
\end{itemize}

Given a closed ``target set'' $Q\subset \bar{\Omega},$
we define the \emph{stopping time} $T=\min_{t\geq 0}\{t \, \mid \, \y(t)\in Q\}$.
The total (random) cost of starting from $(\x, i)$ and using the control $\balpha(\cdot)$ is defined as
\begin{align}
J\left(\x,i, \balpha \right) \; = \; \int_{0}^{T}C_{m(t)}\left(\y(t),\balpha(\y(t), m(t)) \right)dt \, + \, q(\y(T)),
\end{align}
where $C$ is the positive {\em running cost} defined on $\left( \Omega \backslash Q \right) \times \M \times A$
and $q$ is the {\em terminal cost} defined on $Q \cup \partial \Omega$.
To constrain the dynamics to $\Omega$, we impose a prohibitive terminal cost ($q = + \infty$) on $\partial \Omega \backslash Q$.
We can now define the {\em value function} as the minimum expected cost starting from 
$\x$ with initial mode $i$:
\begin{align}
u(\x, i) = \inf_{\balpha \in \mathcal{A}} \mathbb{E} \left[ J \left( \x,i,\balpha \right) \right]
\end{align}
Throughout this paper we will use $u_i(\x)$ and $u(\x,i)$ notations interchangeably.

A standard formal argument can be used to derive the Hamilton-Jacobi-Bellman (HJB) equation for $u$.
The optimality of $\balpha\in\mathcal{A}$ yields
\begin{align}
u(\x,i)=\min_{\balpha\in\mathcal{A}} \left\{
\mathbb{E}\left[\int_0^{\tau} C_{m(t)}(\y(t), \balpha(\y(t),m(t)))dt\right]
\, + \,
\mathbb{E}\left[u(\y(\tau), m(\tau))\right]
\right\}
\end{align}
for $\tau$ less than the stopping time. As $\tau\to 0$, we have
\begin{align}\label{eq:CoupledDiscretize}
u(\x,i)=\min_{\textbf{a}\in A} \left\{ \tau C_{i}(\x,\ba) \, + \, \sum_{j=1}^n p_{ij}(\tau) u(\xtilde_{\ba},j) \right\} \, + \, o(\tau),
\end{align}
where $\xtilde_{\ba}=\x+\tau\f_{i}(\x,\ba)$.

The first-order approximations of $p_{ij}(\tau)$ and $u(\xtilde_{\ba}, i)$ are:
\begin{align*}
p_{ij}(\tau) \, = & \, 1-\exp(-\lambda_{ij}\tau) + o(\tau) \, = \, \lambda_{ij}\tau + o(\tau), \qquad \text{ if } j\neq i ,\\
p_{ii}(\tau)\, = & \, 1-\sum_{j\neq i} p_{ij}(\tau) \, = \, 1 - \sum_{j\neq i}\lambda_{ij}\tau + o(\tau),\\
u_i(\xtilde_{\ba}) \, = & u_i(\x) + \tau \f_i(\x,\ba)\cdot \nabla u_i(\x) + o(\tau).
\end{align*}
Using these approximations in \eqref{eq:CoupledDiscretize} we obtain
\begin{align}\label{eq:CoupledSimplify}
u_i(\x)=\min_{\textbf{a}\in A} \left\{ \tau C_{i}(\x,\ba) \, + \, \sum_{j \neq i} \lambda_{ij}\tau u_j(\x) \, + \, \left( u_i(\x) + \tau \f_i(\x,\ba)\cdot \nabla u_i(\x)
 \, - \, \sum_{j \neq i} \lambda_{ij} \tau  u_i(\x) \right)
\right\} \, + \, o(\tau).
\end{align}
Simplifying and letting $\tau \to 0$, we obtain the HJB equation:
\begin{equation}\label{coupledHJB}
\min_{\ba\in A}\{\nabla u_i(\x)\cdot \f_{i}(\x,\ba)+C_{i}(\x,\ba)\}-\sum\limits_{j\neq i}\lambda_{ij} (u_i(\x)-u_j(\x))=0,
\end{equation}
with $u_i(\x) = q(\x)$ on $Q \cup \partial \Omega$.
Since this has to hold for every $i \in \M$, it is more natural to think of this as a system of $n$ HJB PDEs.  We note that this system is {\em weakly coupled} as long as $\lambda$'s are non-zero.

The above derivation is formal since these PDEs typically don't admit classical solutions even for $n=1$. To obtain a derivation for the most general case, we need the concept of viscosity solution, which was first introduced in \cite{crandall1981viscosity}.
The viscosity solution of similar stochastic hybrid control problems is discussed in
\cite{bensoussan2000stochastic}.


\label{sec:statement}

\section{Mode Differences and Asymptotic Approximations}\label{sec:asymptotic}

The coupling in PDEs \eqref{coupledHJB} presents significant challenges for the numerical computation of the value function and implementation of optimal controls.  The mode-switching results in the interdependence of value functions for specific modes, increasing the number of iterations needed to solve the discretization of these PDEs.
To alleviate these difficulties, it can be useful to consider simplified/asymptotic versions of the above system.  If all rates of switching approach zero, the system becomes uncoupled (with many efficient numerical methods available).  Alternatively,
we also consider the infinite-transition-rate case, in which the switches happen so often that it is not necessary (or is no longer possible in practice) to keep track of the current mode.

Both of these asymptotic approximations can be used instead of the correct value function $u$ (based on the actual switching rates matrix $\Lambda$)
to simplify the computation of optimal controls.
The performance implications of this approach are explored in the simulations of section \ref{sec:experiments}.

\subsection{Uncoupled Planning}\label{ss:uncoupled_plan}

If there is no switching between different modes (i.e., $\Lambda=0$), the corresponding value function, defined as $u^0_i(\x)$,
must satisfy the system of HJBs
\begin{align}\label{uncoupledHJB}
\min_{\ba\in A}\{\nabla u^0_i(\x)\cdot \f_{i}(\x,\ba)+C_{i}(\x,\ba)\}=0.
\end{align}
Since this system is {\em uncoupled},  $u^0$ can be computed for each mode $i$ separately\footnote{An example of this
kind is the description of one-wind-direction-only value functions $v_i$ in the Introduction.}.
This is simply a collection of $n$ standard deterministic optimal control problems, for which many efficient numerical methods are already available.
Methods based on Fast Sweeping \cite{boue1999, zhao2005fast, qian2007fast, kao2005fast}, generalizations of Fast Marching \cite{sethian1996fast, sethian2001ordered, sethian2003ordered, alton2012ordered, mirebeau2014efficient}, hybrid two-scale methods \cite{chacon2012fast, chacon2015parallel}, or other label-correcting-type techniques \cite{bornemann2006, bak2010some, jeong2007interactive, gillberg2012new}
might be advantageous depending on what is known about the running cost and the dynamics.
Crucially, many of these take advantage of the direction of characteristics, which are typically quite different for each mode $i$.
This is not a problem in the uncoupled case, but makes such techniques much less efficient when we are interested in a general $\Lambda$.

\subsection{Infinite-transition-rate Planning}\label{ss:Inf-Trans}
For the purposes of this section, we assume that the switching process is irreducible; i.e. it is possible to eventually get to any mode starting from any other mode.
Given the transition rate matrix $\Lambda$, it is logical to ask what portion of time the process spends in each mode.
This gives rise to an invariant distribution on $\M$.  We recall several of its standard properties
(e.g., see \textbf{Theorem 3.5.3} and \textbf{Theorem 3.6.2} in \cite{norris1998markov}):

\noindent
For an irreducible transition rate matrix $\Lambda$ and its probability matrix $P(t)$,
\vspace*{-2mm}
\begin{itemize}
\item[(1)] There exists a unique invariant distribution $\pi^* \in \mathbb{R}^n$ satisfying $\pi^*=\pi^* P(t)$ for all $t>0$;
\item[(2)] $\lim\limits_{t\to +\infty}||\pi P(t) - \pi^*||=0$ for all initial distributions $\pi.$
\end{itemize}
If transition rate matrix $\Lambda$ is replaced by $\Lambda_c = c\Lambda$ for some $c>0$,
the probability matrix becomes $P_c(t) = \exp(ct\Lambda)$.  It is interesting to study the {\em infinite transition rate limit}: the behavior of the corresponding value functions $u^c_i(\x)$'s as $c \to \infty$.
Since every probability distribution $\pi^c(t)$ will satisfy
$$
\pi^c(t) = \pi^c(0) P_c(t) = \pi^c(0) \exp(ct\Lambda) = \pi^c(0) P_1(ct) = \pi^1(ct), \qquad
\lim_{c\to \infty} \pi^c(t) = \lim_{t\to\infty}\pi^1(t) = \pi^*,
$$
in the limiting case the distribution is always $\pi^*$ at any $t> 0$ regardless of the initial mode.
Additional technical assumptions (made precise in Theorem \ref{thm:upperbound} in Appendix), yield an upper bound for mode differences:
\begin{equation}
\label{eq:upper_rate}
\|u_i^c(\x) - u_j^c(\x)\|_{\infty} = O(c^{-1}).
\end{equation}
Thus, in the infinite-transition-rate limit, the value functions in all modes will be the same, and
we can simply use $u^{\infty}(\x)$ instead of $u^{\infty}_i(\x)$ to represent the value function.
Additional controllability assumptions about $\f_i$'s will yield the local Lipschitz-continuity of all $u^c_i$'s
on $\Omega \backslash Q$.  An argument similar to the one in \cite{evans1989perturbed, barron2008infinity}
can be then used to prove that their convergence to $u^{\infty}$ is uniform.
The latter function can be recovered as the viscosity solution of the HJB PDE
\begin{equation}\label{infHJB}
\min_{\ba\in A}\{\nabla u^{\infty}(\x)\cdot \mathbb{E}_{i\sim\pi^*}[ \f_{i} (\x,\ba)]+\mathbb{E}_{i\sim\pi^*} [C_{i}(\x,\ba)]\}=0,
\end{equation}
where
$\quad \mathbb{E}_{i\sim\pi^*}[ \f_{i} (\x,\ba)] = \sum_{i=1}^n \pi^*_{i}\f_{i} (\x,\ba)
\quad \text{ and } \quad
\mathbb{E}_{i\sim\pi^*}[ C_{i}(\x,\ba)] = \sum_{i=1}^n \pi^*_{i} C_{i} (\x,\ba)$.

Due to the irreducibility of the switching process\footnote{
For a more general case of possibly reducible switching processes, the value function does depend on the {\em communicating class} of the starting mode
\cite{norris1998markov}, and
a PDE similar to \eqref{infHJB} would have to be solved for each communicating class.
},
the choice of optimal controls in the limit becomes independent of the current mode $i$ and can be determined based on $\pi^*$.
We note that the optimal controls derived from \eqref{infHJB} can be applied even for finite $c$ as a way of planning for a {\em partially observable}
process.  (I.e., what is the best way to reach the target if we do not know the current mode $m(t)$?) This approach is advocated in several prior papers
\cite{yershov2013simplicial, bujorianu2004general}.
In section \ref{sec:experiments} we show that it has significant disadvantages for robotic path planning problems.

\subsection{Using Miscalculated Transition Rates}
\label{ss:approx}
Another interesting question is the expected total cost of path planning if we base it on ``approximately correct'' (rather than the real) transition rates.  This arises naturally if the rates we use come from the statistics accumulated in previous runs.   Alternatively, even if the real transition rate matrix $\Lambda^r = (\lambda_{ij}^r)$ is known explicitly, one might prefer to replace it with $\Lambda^p = (\lambda_{ij}^p)$ used for path planning purposes just for the sake of computational efficiency.
Assuming that $\Lambda^r$ and $\Lambda^p$ are related by simple scaling as in subsection \ref{ss:Inf-Trans}, we will use the corresponding $c$ values (including the limiting case $c = \infty$) in the superscript.
If $\balpha^p \in \mathcal{A}$ is the ``optimal'' control based on $\Lambda^p$, we define the expected cost $u^{r,p}$ of using that control when $\Lambda^r$ is in effect as
\begin{align}
u^{r, p}_i(\x) \; = \; \mathbb{E}[ J(\x, i, \balpha^p) \, | \, \Lambda^r].
\end{align}
The case $r=p$ corresponds to the equations in sections \ref{sec:statement}-\ref{ss:Inf-Trans}; otherwise,
$u^{r,p}$ can be recovered by solving a coupled system of linear HJB PDEs
\begin{align}\label{eq:urp}
&\nabla u^{r,p}_i(\x) \cdot \f_i(\x, \balpha^p(\x, i)) - \sum_{j \neq i} \lambda_{ij}^{r} (u^{r, p}_i(\x) - u^{r,p}_j(\x)) + C_i(\x, \balpha^p(\x, i)) = 0,
\end{align}
where $ \balpha^p(\x, i) = \arg\min\limits_{\ba\in A} \{ \nabla u^{p,p}_i(\x) \cdot \f_i(\x, \ba) + C_i(\x, \ba) \}$ are assumed to be already known.



\section{Numerical Method}\label{sec:Numeric}

To simplify the notation, we will focus on the time-optimal control problems (i.e., $C_{i}(\x,\ba) = 1$ for all $\x, \ba,$ and $i$).
The numerical solution is sought on a Cartesian grid with grid spacing $h$ imposed over $\Omega$.
We will use $\X$ to denote the set of all gridpoints.
A separate copy of the grid is used for each mode $i \in \M$.
The PDE in a continuous domain is replaced by a coupled
system of discretized equations (one for each $\z=(\x,i)$).  So, the approximation
method consists of two components:
\begin{enumerate}
\item
a formula for approximating each individual $u(\x, i)$ if the value function for each neighboring gridpoint in $\X \times \M$ is already known;\\
\item
an algorithm for solving the entire coupled system.
\end{enumerate}
We start with the latter, which is largely ``discretization-neutral'', and postpone the former until subsection \ref{ss:discretization}.
The following notations will be used throughout the section:
\begin{center}
\begin{tabular}{| p{2.5cm} | p{10cm} |}
  \hline

  $h$ & the gridline spacing;\\
  \hline
  $\X$ & the set of all gridpoints in $\Omega$\\
  \hline
  $\mathcal{Z}$ & the set of all gridpoints in $\Omega\times\M$\\
  \hline
  $X(\z)$& the projection onto $\X$ for all $\z\in \mathcal{Z}$\\
  \hline
  $M(\z)$& the projection onto $\M$ for all $\z\in \mathcal{Z}$\\
  \hline
  $U(\z)$& the approximate value function at $\z$\\
  \hline
  $\mathcal{I}(\z)$ &
   the (discretization-dependent) set of ``possibly influenced'' gridpoints;
   i.e., $\mathcal{I}(\z) = \left\{ \zhat \in \mathcal{Z} \, | \,
   U(\zhat) \text{ might depend on } U(\z) \right\}$\\
  \hline
  $\mathcal{D}(\z)$ &
   the (discretization-dependent) set of ``possibly influencing'' gridpoints;
   i.e., $\mathcal{D}(\z) = \left\{ \zhat \in \mathcal{Z} \, | \, \z \in \I(\zhat) \right\}$\\
  \hline
  $\mathcal{Q}$ & the discretized target set; i.e., $\mathcal{Q} = \X \cap Q$\\
  \hline
  $q(\x)$ & the boundary condition for $\x\in \X$\\
  \hline
  $\mathcal{N}(\x)$ & the set of neighboring gridpoints; i.e., \newline
  $\mathcal{N}(\x) \, = \, \left\{ \xbar \in \X
  \, | \,  \| \x - \xbar \| = h \right\}$\\
  \hline
  $ActiveFlag(\z)$ & a boolean flag indicating whether $u(\z)$ should be recomputed\\
  \hline
\end{tabular}
\end{center}
Whenever the correspondence $\z=(\x,i)$ is clear from the context, we will use both the $\z$-based and $(\x,i)$-based notations
interchangeably; e.g.,  $U(\x,i)=U(\z),$ and $\f_{i}(\x,\ba)=\f(\z,\ba).$

We will focus on discussing algorithms for the general weakly-coupled planning.  The problems described in sections \ref{ss:uncoupled_plan} and \ref{ss:Inf-Trans} can be handled similarly, but are also covered by more efficient non-iterative methods; e.g., \cite{sethian2003ordered, alton2012ordered, mirebeau2014efficient}.

\subsection{Value Iterations}

Our overall approach is iterative -- a Gauss-Seidel relaxation of the {\em value iterations method}.
This is essentially an extension of {\em fast sweeping methods}, originally introduced for single-mode problems \cite{boue1999, zhao2005fast}.
We alternate through a set of predefined ``geometric orderings'' to sweep through gridpoints in $\X$ (e.g., in 2D, the four sweeping directions are
from the south-west, from the south-east, from the north-east, and from the north-west).
In each sweep, we attempt to update the value at each gridpoint by solving the discretized HJB equations (see section 
\ref{ss:discretization});
whenever a specific
$\x \in \X$ is processed, we re-compute $U(\x, i)$ for all $i \in \M$.
The newly-computed value is only used if it is smaller than the previous version of $U(\x,i)$.  This relies on the {\em monotonicity} of our discretization
(discussed in section \ref{ss:discretization})
and the initialization $U(\x, i) = +\infty$ for all $\x \not \in \mathcal{Q}.$

Our implementation also relies on one further speed-up technique: {\em the active flags}, first used in \cite{bak2010some} for the Eikonal PDE, are employed to identify the gridpoints that might need to be updated in each sweep.  (There is no point in updating $\z$ if none of the values on $\D(\z)$ have changed since the last time $U(\z)$ was computed.)
Finally, the iterations are terminated once the maximal change in the value function observed in the last sweep
falls below the predefined tolerance parameter
$\varepsilon > 0$.

\begin{remark}\label{rem:loop_struct}
In the current loop structure (iterate over $\M$ in the inner loop), none of the marching-type methods (e.g., \cite{sethian1996fast, sethian2003ordered}) are directly applicable since characteristic directions are usually quite different in different modes and the value functions for different modes are coupled.
One could also use an alternative loop structure, iterating over $\M$ in the outer loop.
 This technique iteratively ``freezes'' the value function for all modes but one, with the latter then solved by the same techniques available for decoupled problems.
 All previously developed methods (marching, sweeping, hybrid, label-setting, etc) are available as solvers for this ``one mode updated at a time'' problem.  However, our computational experiments showed that this alternative loop structure usually requires a much larger number of iterations.
 This determined our implementation choices described in Algorithm \ref{agDyn}.
\end{remark}

\begin{center}\label{agDyn}
\begin{algorithm}[!ht]
\#\texttt{Initilization:}\\
\For {$\z \in \mathcal{Z}$}
{
    $ActiveFlag(\z)\leftarrow$ \textbf{false}\\
}
\For {$\x\in \X$}
{
    \eIf {$\x \in \mathcal{Q}$}
    {
        \For {$i \in \M$}
        {
            $U(\x, i) \leftarrow q(\x)$\\
            \For {$\zhat \in \mathcal{I}(\x, i)$}
            {
            	$ActiveFlag(\zhat) \leftarrow$ \textbf{true}\\
            }
        }
    }
    {
        \For {$i \in \M$}
        {
            $U(\x, i) \leftarrow \infty$\\
        }
    }
}
\ \\
\#\texttt{Main Loop:}\\
 $converged \leftarrow$ \textbf{false}\\
\While {not $converged$}
{
    $max\_change \leftarrow 0$\\
    \For  {$\x\in \X \backslash \mathcal{Q}$ { enumerated in the current sweep order}}
    {
            \For {$i\in \M$}
            {
                $\z \leftarrow (\x,i)$\\
                \If {($ActiveFlag(\z) ==$ \textbf{true})}
                {
                    $ActiveFlag(\z) \leftarrow $ \textbf{false} \\	
                    $U^* \leftarrow \textbf{update}(\z)$  \hfill\#\ \texttt{subsection \ref{ss:discretization}}\\
                    $change\_z \leftarrow U(\z) - U^*$ \\
                    $max\_change \leftarrow \max\{change\_z, max\_change\}$\\
                    $U(\z) \leftarrow U^*$  \\
                    \For {$\zhat \in \mathcal{I}(\z)$}
                    {
                        \If {$X(\zhat) \not\in \mathcal{Q}$}
                        {
                    	   $ActiveFlag(\zhat) \leftarrow \textbf{true}$\\
                        }
                }
            }
   	 }
    	\eIf {$max\_change < \varepsilon$}
    	{
         	$converged \leftarrow$ \textbf{true}
   	 }
    	{
        		switch to the next sweep order
        }
    }
}
\caption{Pseudocode for solving the dynamic programming equations on a grid.}
\end{algorithm}
\end{center}

\subsection{Discretizations on the Grid}\label{ss:discretization}
We discuss several first-order accurate update formulas used to compute $U(\z)$
if $U(\zhat)$ are already known for all $\zhat \in \D(\z).$
We start with the most general semi-Lagrangian discretization and then show how additional
assumptions about the dynamics can be leveraged to obtain more efficient algorithms.
We introduce the notation used throughout this section:
\begin{center}
\begin{tabular}{| p{2.5cm} | p{12cm} |}
  \hline
  $(\textbf{e}_1, \ldots, \textbf{e}_d)$ & the canonical basis in $\mathbb{R}^d$\\
  \hline
 ort $= (\epsilon_1, \cdots, \epsilon_d)$ & a vector encoding a particular orthant in $\mathbb{R}^d$:  $\; \epsilon_k = \pm 1, \quad k = 1, \cdots, d$\\
 \hline
 $\mathcal{O}$ & the set of all orthants in $\mathbb{R}^d$\\
 \hline
 $\xi = (\xi_1, \cdots, \xi_d)$ & a point in the unit $(d-1)$-simplex: $\; \sum_{k=1}^d \xi_k = 1$, $\xi_k \geq 0, \quad  k = 1, \cdots, d$ \\
 \hline
 $\Xi$ & the unit $(d-1)$-simplex\\
  \hline
\end{tabular}
\end{center}



\subsubsection{Semi-Lagrangian Method}\label{sss:semiLag}
Semi-Lagrangian discretizations are based on following the characteristic for a short time and then using
the interpolated value function at the resulting point.  A comprehensive overview for the fully deterministic case can be found in \cite{falcone2013semi}.

\noindent
The idea is to use the dynamic programming equation \eqref{eq:CoupledDiscretize} to approximate the value function:
\begin{align}\label{eq:semiLag}
U(\x, i) \approx \min_{\ba\in A}
\left\{
\tau + \sum_{j \in \M} {p}_{ij}(\tau) {U}(\tilde{\x}_{\ba, \tau}, j)
\right\},
\qquad
\tilde{\x}_{\ba, \tau} = \x + \tau \f_i(\x,\ba).
\end{align}
$U(\tilde{\x}_{\ba, \tau}, j)$ is computed by an interpolation procedure, since $\tilde{\x}_{\ba, \tau}$ is generally not a gridpoint.
Here, we can use a first order approximation for ${p}_{ij}(\tau)$.


The discretization also implicitly depends on the choice of the (pseudo-)timestep $\tau$.
One popular choice is to take $\tau>0$ to be a constant proportional to $h$ \cite{falcone1994minimum}.
 In this approach, it is common to approximate $U(\tilde{\x}_{\ba, \tau}, j)$ through bi-linear interpolation using all $2^d$ vertices of the grid cell containing $(\tilde{\x}_{\ba, \tau}, j)$.

Another choice is to use a control-dependent $\tau_{\ba} = h / \|\f_i(\x,\ba)\|$ to ensure that $\tilde{\x}_{\ba, \tau}$ lies in a ($d-1$)-dimensional simplex formed by the gridpoints adjacent to $\x$. In this case, we can interpolate ${U}(\tilde{\x}_{\ba, \tau}, j)$ linearly using the values at the vertices of that simplex.



In our implementation we use the latter approach, which has important computational advantages (discussed at the end of this section),
particularly for the {\em small-time controllable problems}, where the controller can move the system in every direction regardless of the current mode.
I.e., for the rest of this section we will assume that, given any $\forall \z \in \mathcal{Z}$ and any
$(\ort, \xi) = \big((\epsilon_1, \cdots, \epsilon_d), \, (\xi_1, \cdots, \xi_d)\big) \in \mathcal{O} \times \Xi$,
there exists a unique control $\ba = \ba(\z, \ort, \xi)\in A$ and $\tau = \tau(\z, \ort, \xi) > 0$ such that
\begin{align}\label{eq:depend_tau}
\tau \f(\z, \ba) \, = \, (h \epsilon_1 \xi _1, \cdots, h \epsilon_d \xi_d).
\end{align}

Starting from $\z\in\mathcal{Z}$ with the control $\ba=\ba(\z,\ort,\xi)$ determined by $(\ort,\xi)$, we can compute the corresponding timestep
\begin{equation}
\label{eq:tau_for_simplex}
\tau = \tau(\z,\ort,\xi) = \frac{\sqrt{\sum_{k=1}^d \xi_k^2}}{\| {\f}(\z,\ba)\|}h,
\end{equation}
and define the set of contributing neighbors
\[\x_k = \x+\epsilon_k h\textbf{e}_k, \qquad k = 1, \cdots, d.\]

\noindent
If $(\ort, \xi)$ encodes the optimal direction, the approximate value function at $\z$ is
\begin{equation}
\label{eq:1_simplex_update}
\tildeU(\z, \ort, \xi)=
\tau+\sum_{k=1}^d \xi_k \left( \, p_{ii}(\tau)  U(\x_k,i) \, + \, \sum_{j\neq i} p_{ij}(\tau)U(\x_k,j) \, \right).
\end{equation}


Therefore, instead of searching for the optimal control $\ba^*\in A$,
we can search for the optimal $(\ort, \xi)^*\in\mathcal{O}\times\Xi$.
The minimization is efficiently performed on an orthant-by-orthant basis; i.e.,
\begin{align}
\label{eq:multi_simplex_update}
U(\z)=\min_{(\ort,\xi)\in \mathcal{O}\times\Xi}\tildeU(\z,\ort,\xi).
\end{align}
This update strategy is summarized in Algorithm \ref{agSemiLag}.
Since $\xi_k$ and $p_{ij}(\tau)$ are always non-negative, (\ref{eq:1_simplex_update}-\ref{eq:multi_simplex_update}) ensure that $U(\z)$ is a monotone non-decreasing function of $U(\x_k,j)$ for all $k$ and $j$.
As a result, the updates in Algorithm \ref{agDyn} are also performed in a monotone fashion; i.e., all gridpoint values are decreasing.


\begin{center}
\begin{algorithm}[!ht]\label{agSemiLag}
\texttt{function} $U^*=$ \textbf{update$(\z)$}:\\
$U^* \leftarrow U(\z)$\\
\For {$\ort = (\varepsilon_1, \varepsilon_2, \cdots, \varepsilon_d)\in \mathcal{O}$}
{
    $U' \leftarrow \min_{\xi\in\Xi} \tildeU(\z, \ort, \xi)$ \\
    (see formula \eqref{eq:1_simplex_update})\\
    \ \\
    \If {$U' < U^*$}
    {
        $U^* \leftarrow U'$\\
    }
}
\Return $U^*$\\

\caption{A semi-Lagrangian update function with a control-dependent timestep.}
\end{algorithm}
\end{center}


\subsubsection{Eulerian Discretization}\label{sss:Eulerian}
In general, the minimization in \eqref{eq:1_simplex_update} has to be performed numerically,
but for special types of dynamics it might be possible to find an analytic formula for the optimal direction, and Eulerian numerical schemes become preferable.
To illustrate the latter, we will focus on the dynamics of the rowboat affected by changing winds (as described in section \ref{sec:intro}).  The boat velocity in mode $i \in \M$ is $\f_{i}(\x, \ba) \; = \; s(\x) \ba + \bw_{i}(\x)$,
where $\ba \in \mathbb{S}^1$ is our rowing direction, $s(\x)$ is the rowing speed,
and $\bw_i$ is the velocity component due to the $i$-th wind map.
In this case,
\begin{align}
\min_{\ba\in A}\{\nabla u(\x,i) \cdot \f_{i}(\x,\ba)\}
 \; = \;
 -s(\x) \|\nabla u(\x,i)\| + \nabla u(\x,i)\cdot \bw_{i}(\x).
\end{align}
Therefore, the HJB equation for $u(\x,i)$ becomes
\begin{align}
\label{eq:Eik_version}
s(\x) \|\nabla u(\x,i)\| \; = \;  \nabla u(\x,i)\cdot \bw_{i}(\x) - \sum_{j\neq i}\lambda_{ij}(u(\x,i)-u(\x,j)) + 1.
\end{align}
To simplify the notation, we will only describe the discretization for $d=2$.
Having chosen a particular quadrant $\ort = (\epsilon_1, \epsilon_2)$,
we can use $$ D^h U(\x,i) = \left[ \frac{U(\x+\epsilon_1 h\be_1, i)-U(\x, i)}{\epsilon_1 h}, \frac{U(\x+\epsilon_2 h\be_2, i)-U(\x, i)}{\epsilon_2 h} \right]^T$$
to approximate $\nabla u(\x, i)$.
Plugging in this approximation into \eqref{eq:Eik_version}, reduces the PDE to
\begin{equation} \label{eq:Eik_discr}
s^2(\x) \left\| D^h U(\x,i) \right\|^2
\quad = \quad
\left[
D^h U(\x,i) \cdot \bw_{i}(\x) \; - \;
\sum\limits_{j\neq i}\lambda_{ij} \left(U(\x,i)-U(\x,j) \right) \; + \; 1
\right]^2.
\end{equation}
Assuming the values of $U$ at all neighboring gridpoints in $\Z$ are known,
this is simply a quadratic equation for $U(\x,i)$.

Note that $\ba^* = \frac{-D^h U}{\|D^h U\|}$ approximates the optimal control value at $(\x, i)$.
If the quadratic equation \eqref{eq:Eik_discr} has real roots, we need to satisfy the additional {\em upwinding condition}, by choosing the smallest root for which
the velocity vector points from the same quadrant used to approximate $\nabla u(x,i)$; i.e.,
\begin{align}\label{eq:upwinding}
{\f}_i(\x,\ba^*)\cdot \epsilon_k\be_k\geq 0,
\qquad k = 1, 2.
\end{align}
A straightforward computation shows that, if the quadratic equation \eqref{eq:Eik_discr} has more than one real root, the smaller root is always inconsistent with the upwinding condition \eqref{eq:upwinding}, and only the larger of them is relevant.

If the quadratic equation does not have real roots, or the upwinding condition \eqref{eq:upwinding} is not satisfied,
we default to ``one-sided'' 
updates along the directions $\epsilon_k\be_k$ for $k=1,2$:
\begin{align}\label{eq:onesided}
U(\x,i) \; = \; \min\limits_{k=1,2} \tildeU_k,
\qquad \quad \text{with } \quad
\tildeU_k \; = \;
\frac{ \tau_k + U( \x + h \epsilon_k \be_k, i ) +  \tau_k \sum\limits_{j\neq i} \lambda_{ij} U(\x,j) }
{1 + \tau_k \sum\limits_{j\neq i} \lambda_{ij} },
\end{align}
where $\tau_k=\frac{h}{\f_{i}(\x,\ba)}$ and $\ba\in A$ is such that $\f_{i}(\x,\ba)=c \epsilon_k \be_k$ for some $c>0$.
This approximation is derived by taking functions $u(\x,j)$ to be known for all $j \neq i$ and writing down the characteristic ODEs for $u(\x,i)$ with the assumption that the characteristic direction at $\x$ is $(\epsilon_k \be_k).$

Finally, since the entire discretization was described for one quadrant, we need to minimize the update over all $\ort \in \mathcal{O}$.
The complete update procedure is summarized in Algorithm \ref{agEulerian}.

 \begin{center}
\begin{algorithm}[!ht]\label{agEulerian}
\texttt{function} $U^*=$ \textbf{update$(\z)$}:\\
$U^*\leftarrow U(\z)$\\
\For {$\ort = (\epsilon_1, \epsilon_2)\in \mathcal{O}$}
{
    \eIf {\eqref{eq:Eik_discr} has real roots AND the larger root $\tildeU$ satisfies the upwinding condition \eqref{eq:upwinding}}
    {
        $U^* \leftarrow \min(U^*, \tildeU)$\\
    }
    {
    	\For {$k=1,2$}
    	{
	       	compute $\tildeU_k$ by formula \eqref{eq:onesided}\\
		$U^*\leftarrow \min(U^*, \tildeU_k$)\\
    	}
    }
}
\Return $U^*$\\

\caption{An Eulerian update function.}
\end{algorithm}
\end{center}
Even though the approach described here is fairly similar to what is done in the Eikonal/single-mode case, there are several important subtleties worth noting.
\begin{enumerate}
\item
This discretization is first order accurate in $h$, consistent, and monotone.
I.e., it is easy to verify that $U(\x,i)$ is a monotone non-decreasing function of all $U(\x,j)$ and
$U(\x+h \epsilon_k \be_k,i)$.  When $u(\x,j)$ is known for all $j \neq i$, the standard argument \cite{barles1991convergence} shows the convergence
of $U(\x,i)$ to the viscosity solution $u(\x,i).$  This is also the reason why the update is only used if it is below the previously computed value of $U(\x,i)$.
(Recall that all $U$ values outside of $\mathcal{Q}$ are initialized to $\infty$ in Algorithm \ref{agDyn}.)
\item
The upwinding condition \eqref{eq:upwinding} is quite different from the one imposed in the simple Eikonal case.  In particular,
$U(\x,i)$ generally {\em is not} larger than the neighboring values from the quadrant used to compute it.  This means that the Fast Marching Method \cite{sethian1996fast} is not directly applicable.  An attempt to extend it to this case has been presented in \cite{dahiya2013characteristic} using a two-sided update equivalent to \eqref{eq:Eik_discr}; however, the lack of causality prevents the convergence for a wide range of examples.  The causal properties can be restored by sufficiently extending the stencil (as in Ordered Upwind Methods) and the corresponding examples were presented in \cite{sethian2003ordered}.  Similar problems were also handled by Fast Sweeping in \cite{cristiani2015modeling}.
\item
An extension of \eqref{eq:Eik_discr} and \eqref{eq:upwinding} to higher dimensional problems is quite straightforward.
However, when the upwinding condition is not satisfied, it becomes necessary to solve a sequence of quadratic equations corresponding to characteristics lying in lower dimensional faces of the update orthant.
\end{enumerate}



 \subsubsection{Comparison of discretizations}\label{sss:comparisions}
 We finish this section by summarizing the properties of three discretizations discussed above:
(a) the semi-Lagrangian method with a constant timestep $\tau$, (b) the semi-Lagrangian method with a control-dependent $\tau$, and (c) the Eulerian method.  All of them are first-order accurate, but their computational costs are quite different.
\begin{itemize}
\item Method (a) works on the most general problems, but is more computationally expensive than (b) and (c)
because (1) the number of iterations required for convergence is large when $\tau$ is relatively small;
(2) a larger stencil makes it more difficult to impose the boundary conditions and treat exit sets with empty interior;
(3) the non-local influence/dependence sets (i.e., $\mathcal{I}(\z)$ and $\mathcal{D}(\z)$) make it far more difficult to implement ActiveFlags efficiently.
\item Method (b) is more computationally efficient and often more accurate 
due to its stencil locality
($\mathcal{I}(\x,i)=\mathcal{D}(\x,i)=\mathcal{N}(\x) \times \M$).
However, it still requires numerical minimization and is only suitable for speed profiles containing the origin in the interior.
\item Method (c) is the most efficient of these three, but it is only applicable when the optimal direction can be found explicitly as a function of the gradient.
Its computational stencil is also local but slightly different;
i.e., $\mathcal{I}(\x,i)=\mathcal{D}(\x,i)=\left(\mathcal{N}(\x) \times \{i\}\right) \bigcup \left(\{\x\} \times \M\right)$.
An argument based on Kuhn-Tucker optimality conditions can be used to show its equivalence
to a related semi-Lagrangian scheme, 
and its output is at most $O(\tau^2)$ different from
the numerical solution produced by the method (b).
We omit the proof for the sake of brevity; see the Appendix in \cite{andrews2013deterministic} for a similar approach.
\end{itemize}

\section{Experimental Results} \label{sec:experiments}
Our experiments are based on a simple model problem described in section \ref{sec:intro}: a rowboat trying to reach its destination in the presence of  (randomly changing) wind.
We also compare the performance of optimal controls with those found under simplifying assumptions
(i.e., zero or infinite wind-switching rate).

\subsection{Experimental setting and parameter values}
We minimize the expected time for the boat
whose velocity is defined by
$$\f_i(\x, \ba) = s(\x)\ba + \w_i(\x), \quad \ba \in \mathbb{S}^1, \quad i = 1, \cdots, n,$$
with the assumption that $\|\w_i\| < s(\x)$, for all $(\x, i)$.
If the transition between the modes are possible, this results in a weak-coupling
and yields the system of equations \eqref{eq:Eik_version},
which we solve iteratively using the Eulerian discretization from section \ref{sss:Eulerian}
on a cartesian grid with $h = \frac{1}{320}$.
The iterations are terminated when the changes in grid values become lower than $\varepsilon=10^{-6}.$


The emphasis of our experiments is on the influence of mode-switching rates on the optimal controls.
To this end, we further simplify the dynamics by taking the constant boat-speed in still water ($s(\x) = 2$),
and two constant wind map versions:$$n=2, \quad \w_1(\x) = [1.5, 0]^T,  \quad \w_2(\x) = [-1.5, 0]^T.$$
We restrict the dynamics to a square domain with a rectangular obstacle
$$
\Omega \quad = \quad (0, 1) \times (0,1) \quad \backslash \quad [0.1, 0.85] \times [0.1, 0.15]
$$
and choose the target at the location $Q = \{(0.5, 0.05)\}.$  The boundary conditions are specified by
$q=0$ on $Q$ and $q=+\infty$ on $\partial \Omega \backslash Q$.

Finally, we also make the mode switching symmetric by taking $\lambda_{12} \; = \; \lambda_{21} \; = \; \lambda$
and study the change in the value function as we vary this $\lambda$.

\subsection{Value Functions}
Figure \ref{figValueFunctions} shows the value functions computed for $\lambda=0,1,10, 50$ and $\infty$
with a magenta disk used to indicate the target.
When $\lambda=0$, the problem corresponds to the uncoupled planning problem in \eqref{uncoupledHJB}. When $\lambda=\infty$, the problem becomes the infinite-transition-rate planning problem in \eqref{infHJB}.
The computational cost of our experiments is very dependent on the strength of coupling:
the number of iterations/sweeps needed up to convergence is 6, 19, 35, 87 and 6 for the respective $\lambda$ values listed above.
\newcommand{\Figonetype}{png}
\def \FIGONETYPE {pdf}

\begin{figure}[!ht]
\centerline{
\includegraphics[width=0.25\textwidth, height=0.25\textwidth]{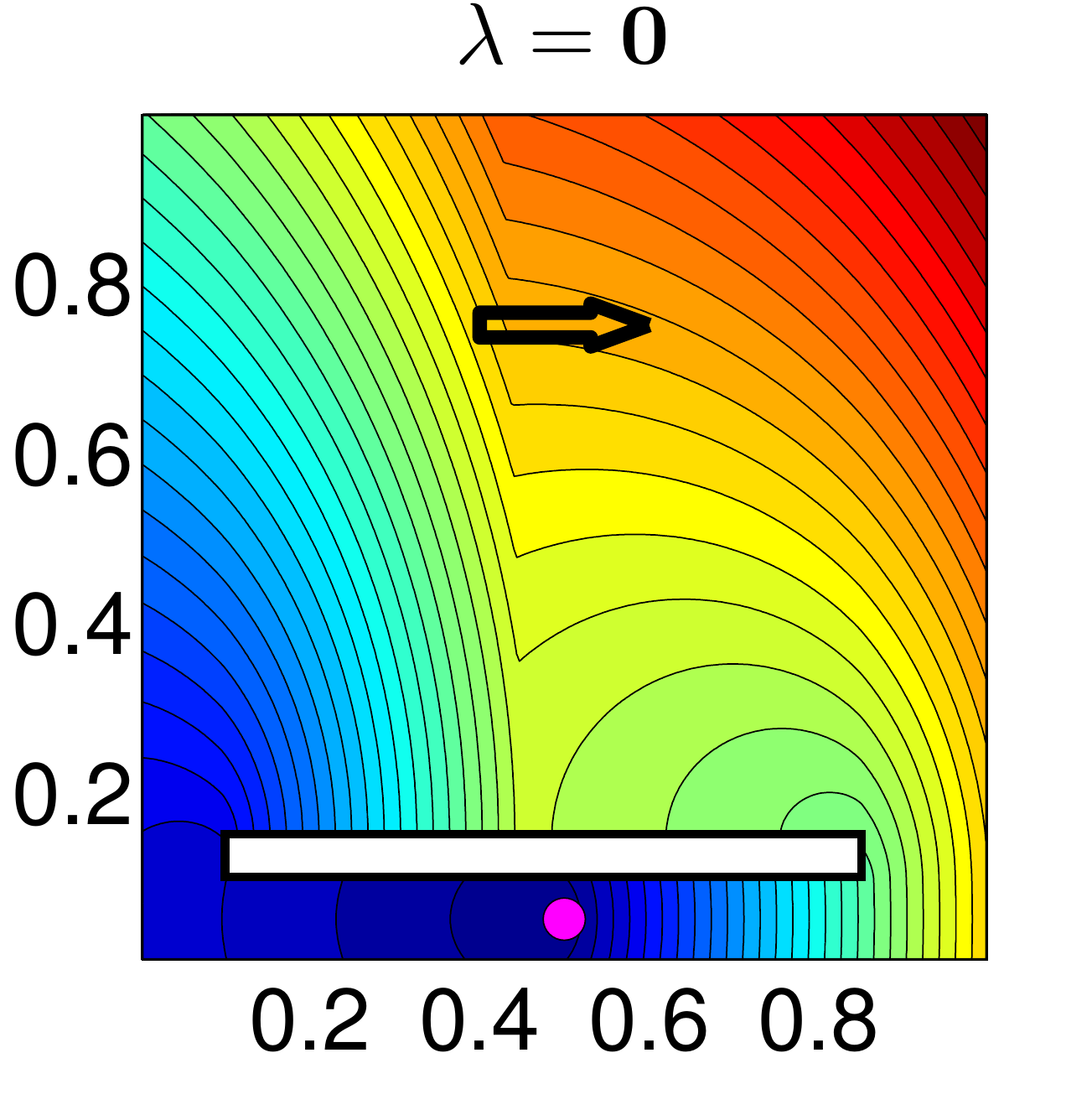}
\includegraphics[width=0.25\textwidth, height=0.25\textwidth]{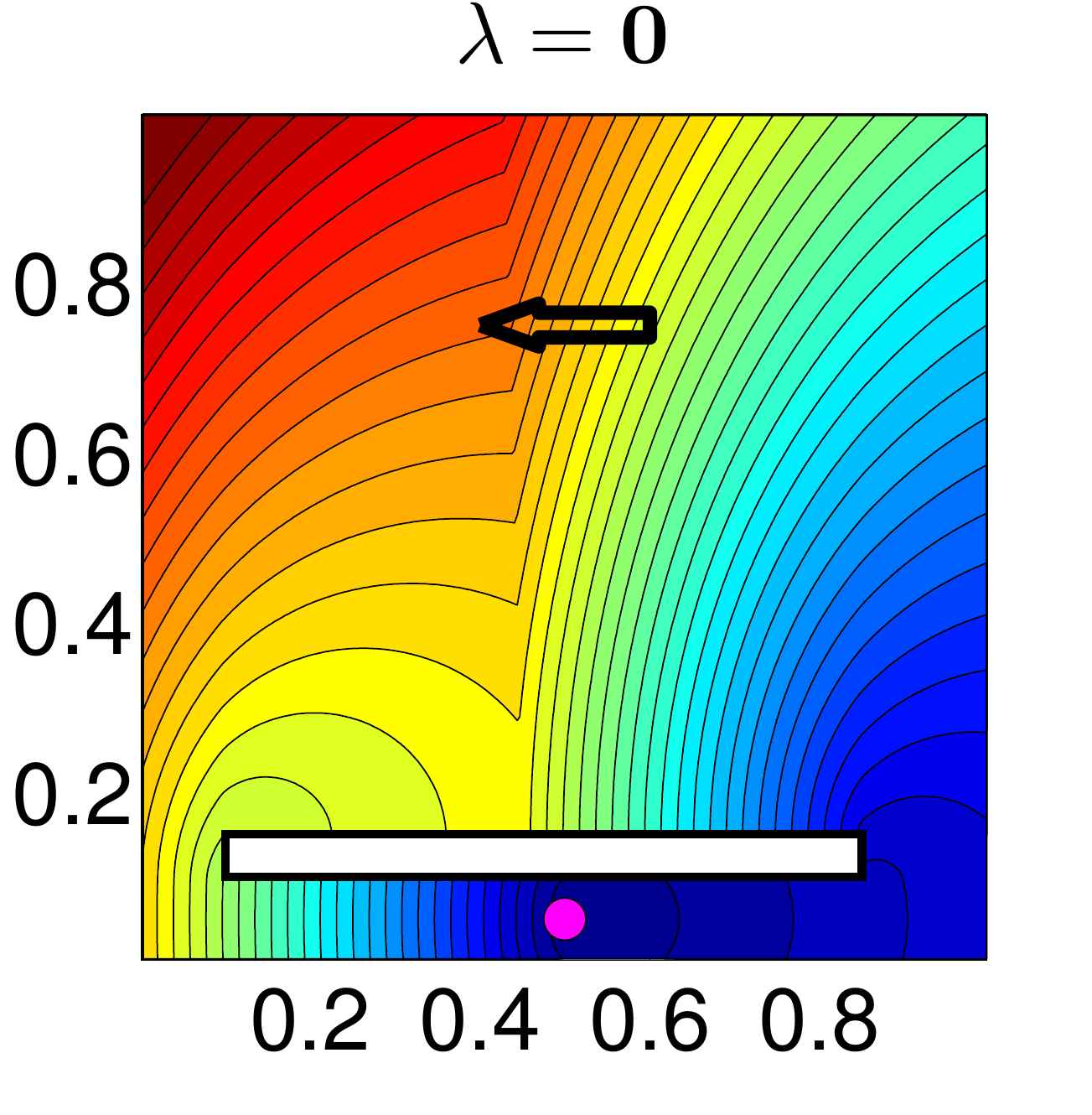}
\includegraphics[width=0.25\textwidth, height=0.25\textwidth]{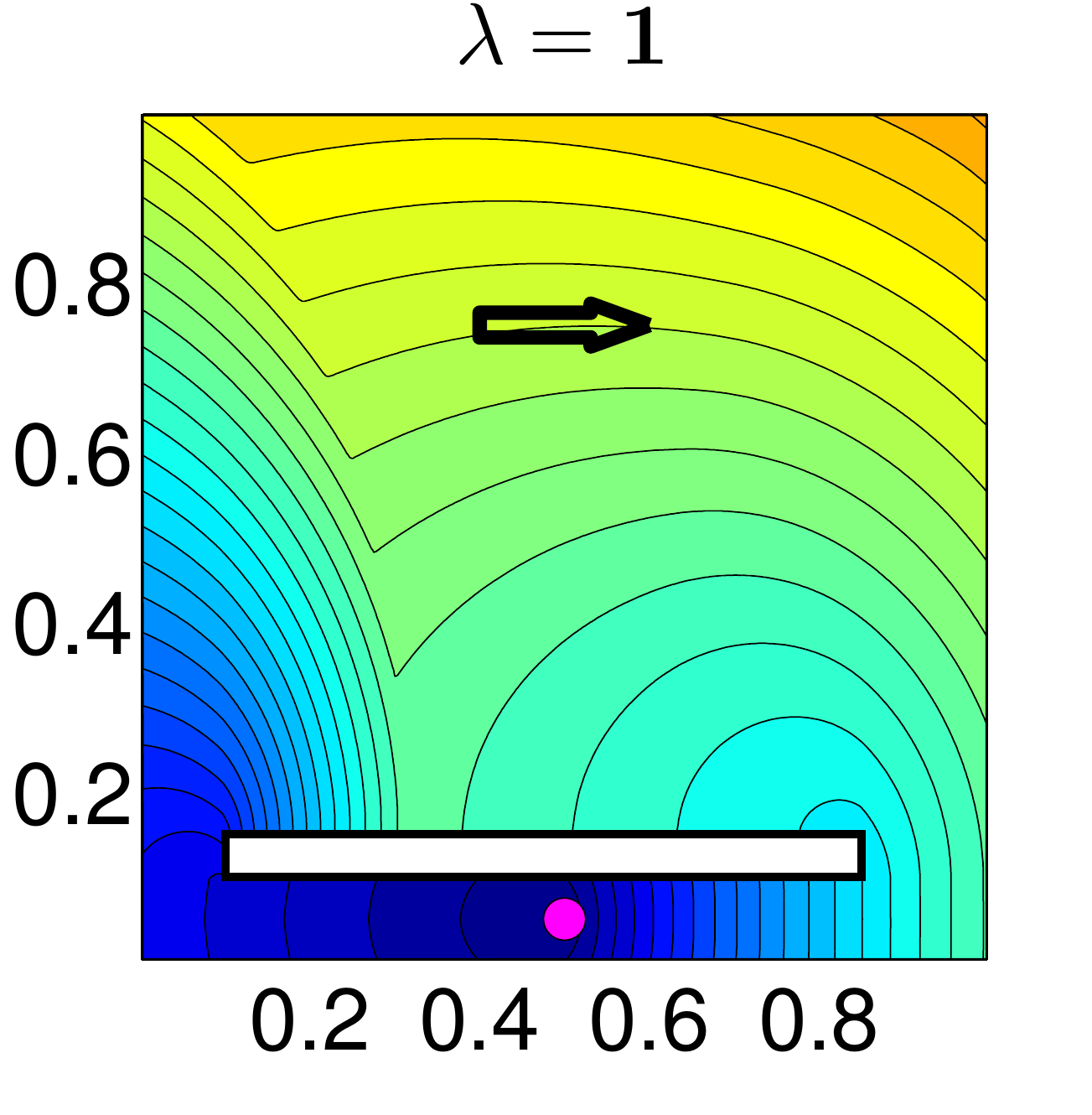}
\includegraphics[width=0.25\textwidth, height=0.25\textwidth]{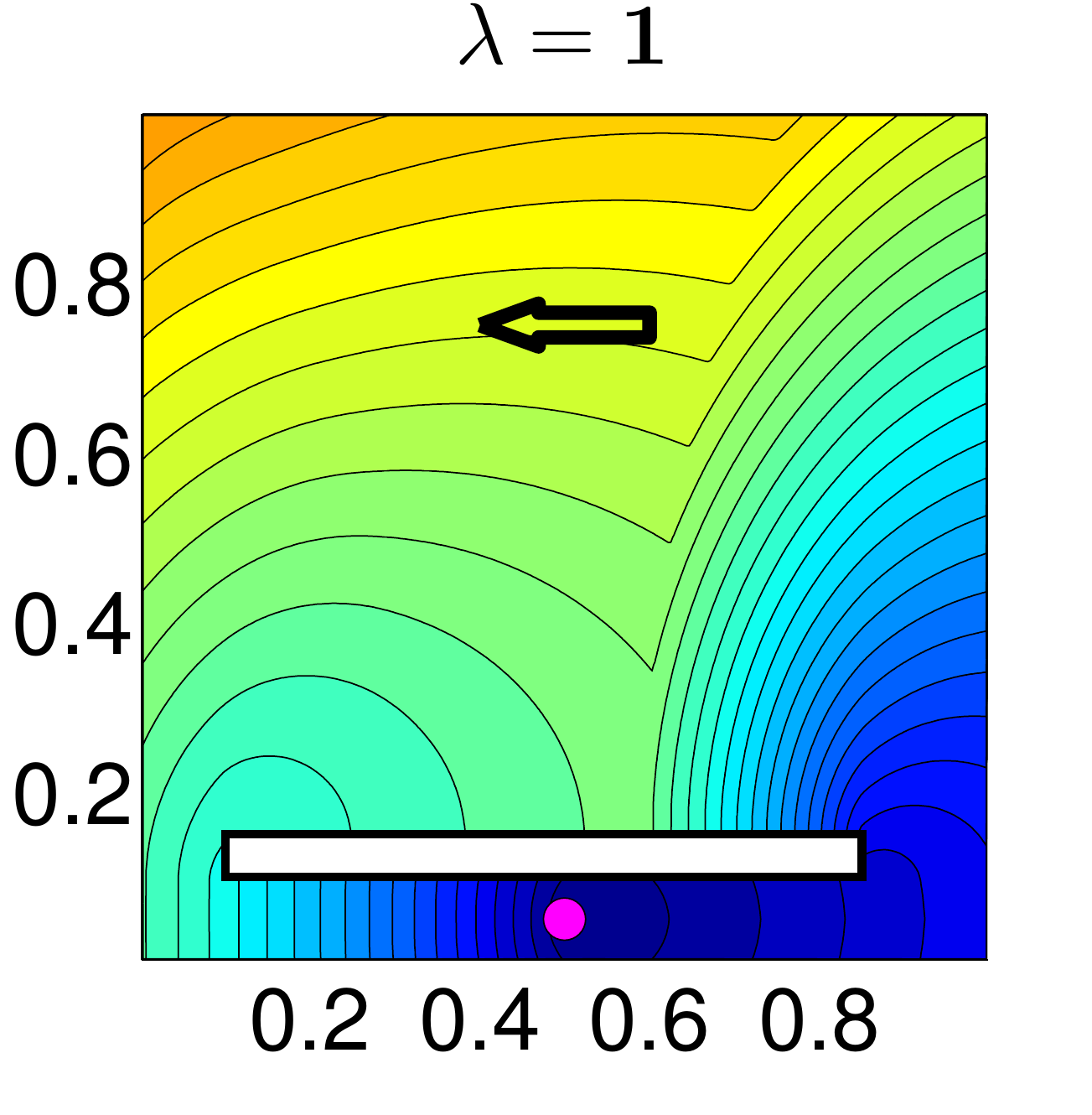}
}

\centerline{
\includegraphics[width=0.3\textwidth, height=0.01\textwidth]{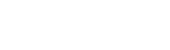}
}
\centerline{
\includegraphics[width=0.25\textwidth, height=0.25\textwidth]{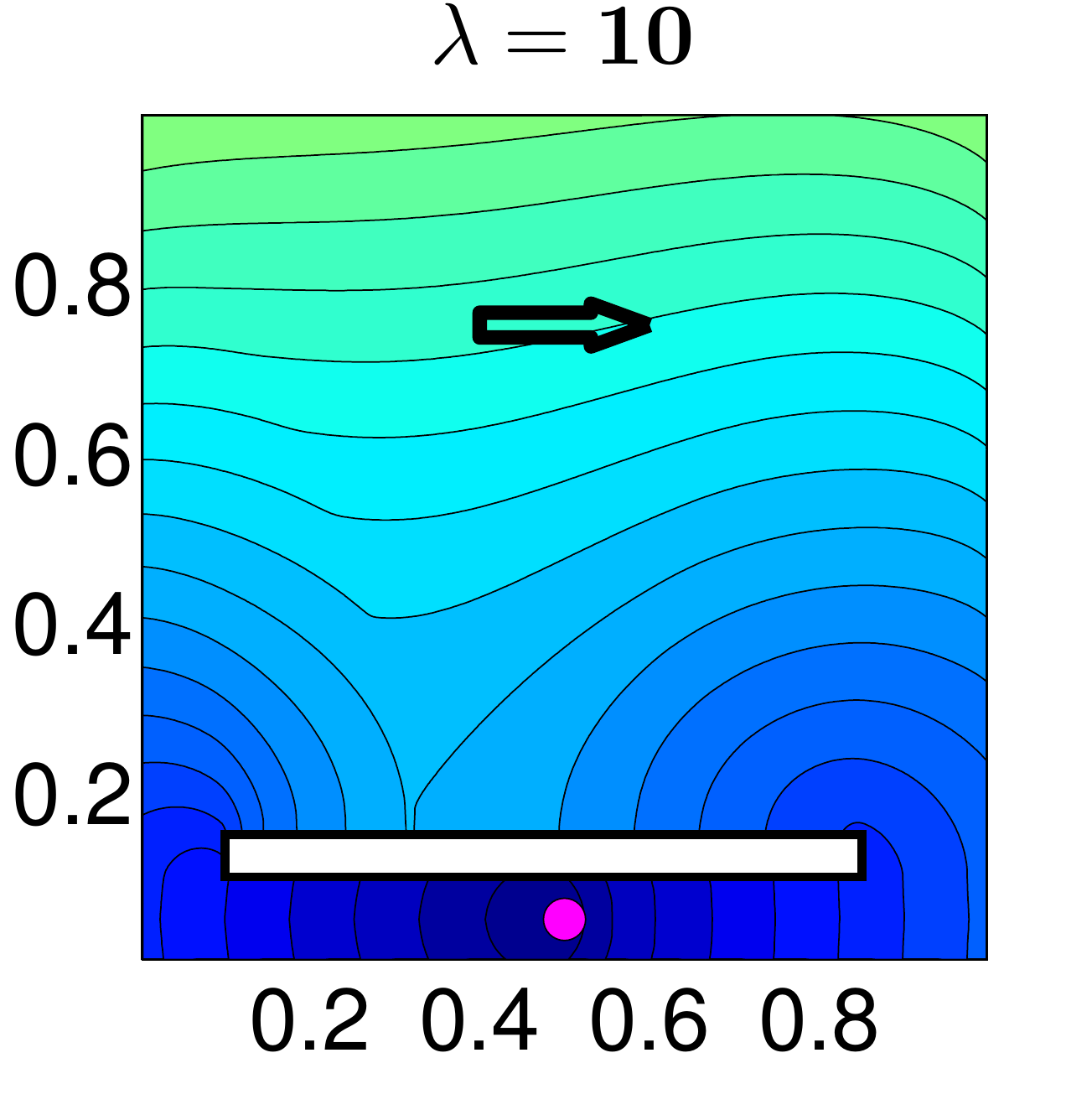}
\includegraphics[width=0.25\textwidth, height=0.25\textwidth]{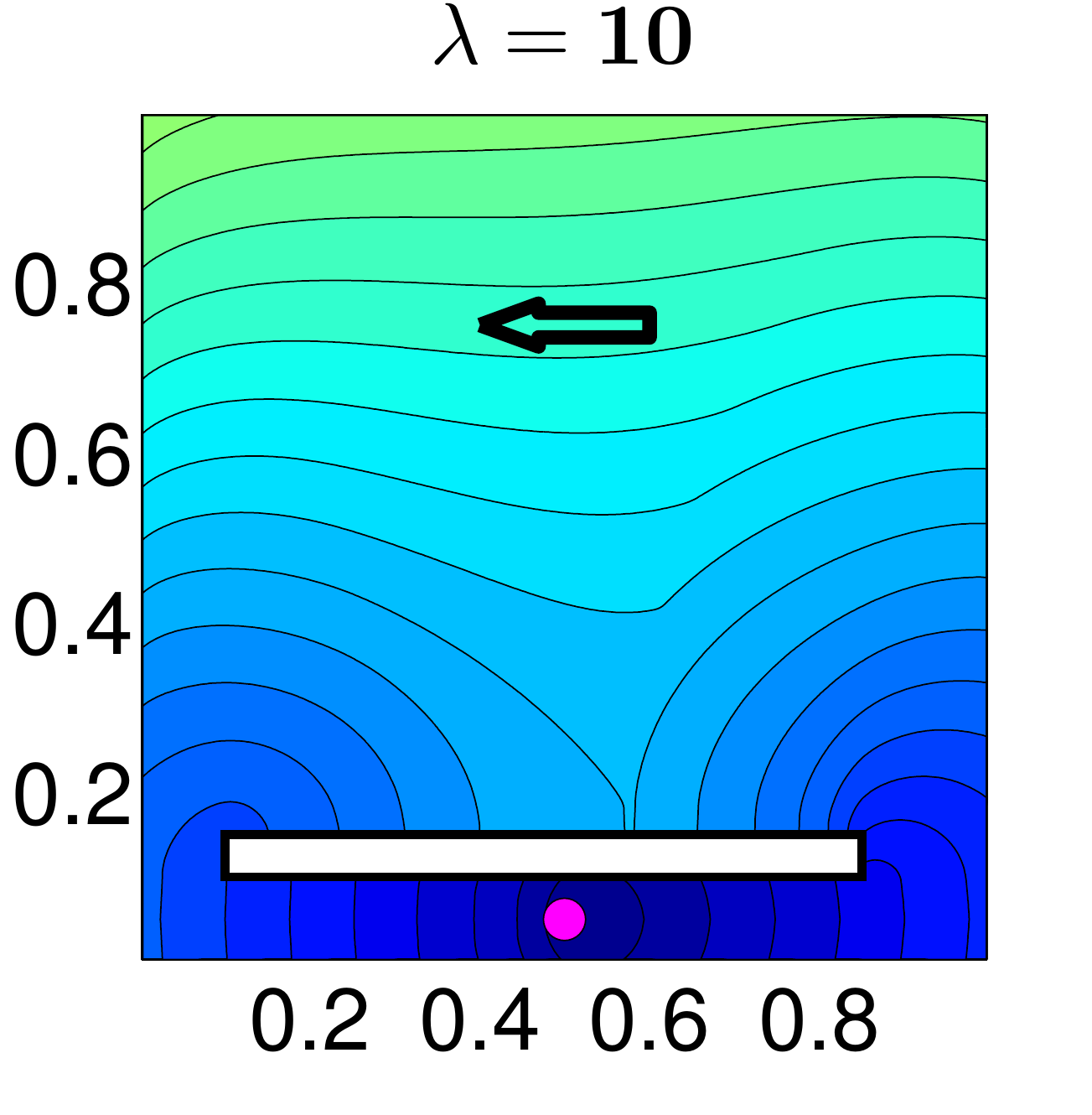}
\includegraphics[width=0.25\textwidth, height=0.25\textwidth]{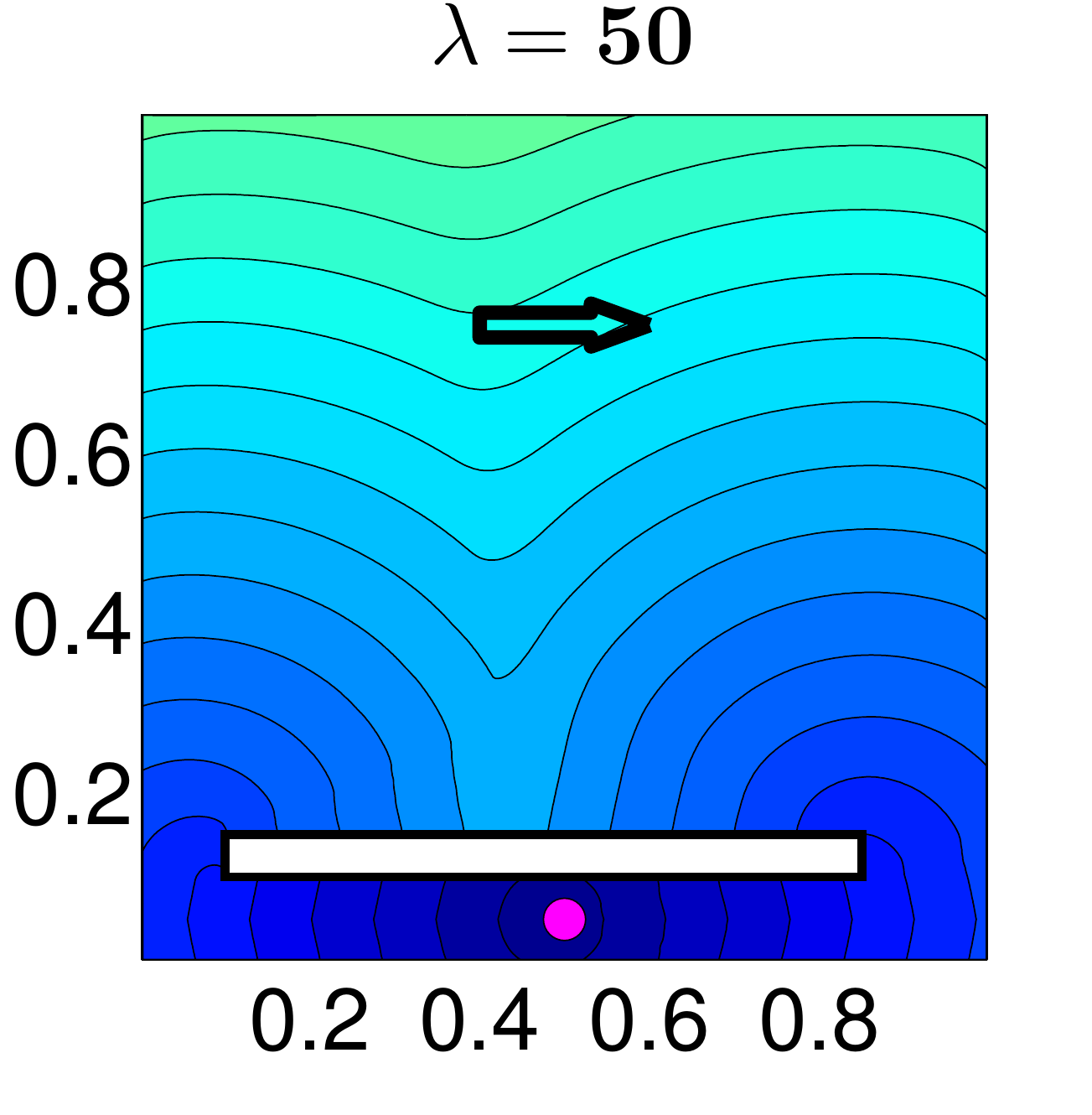}
\includegraphics[width=0.25\textwidth, height=0.25\textwidth]{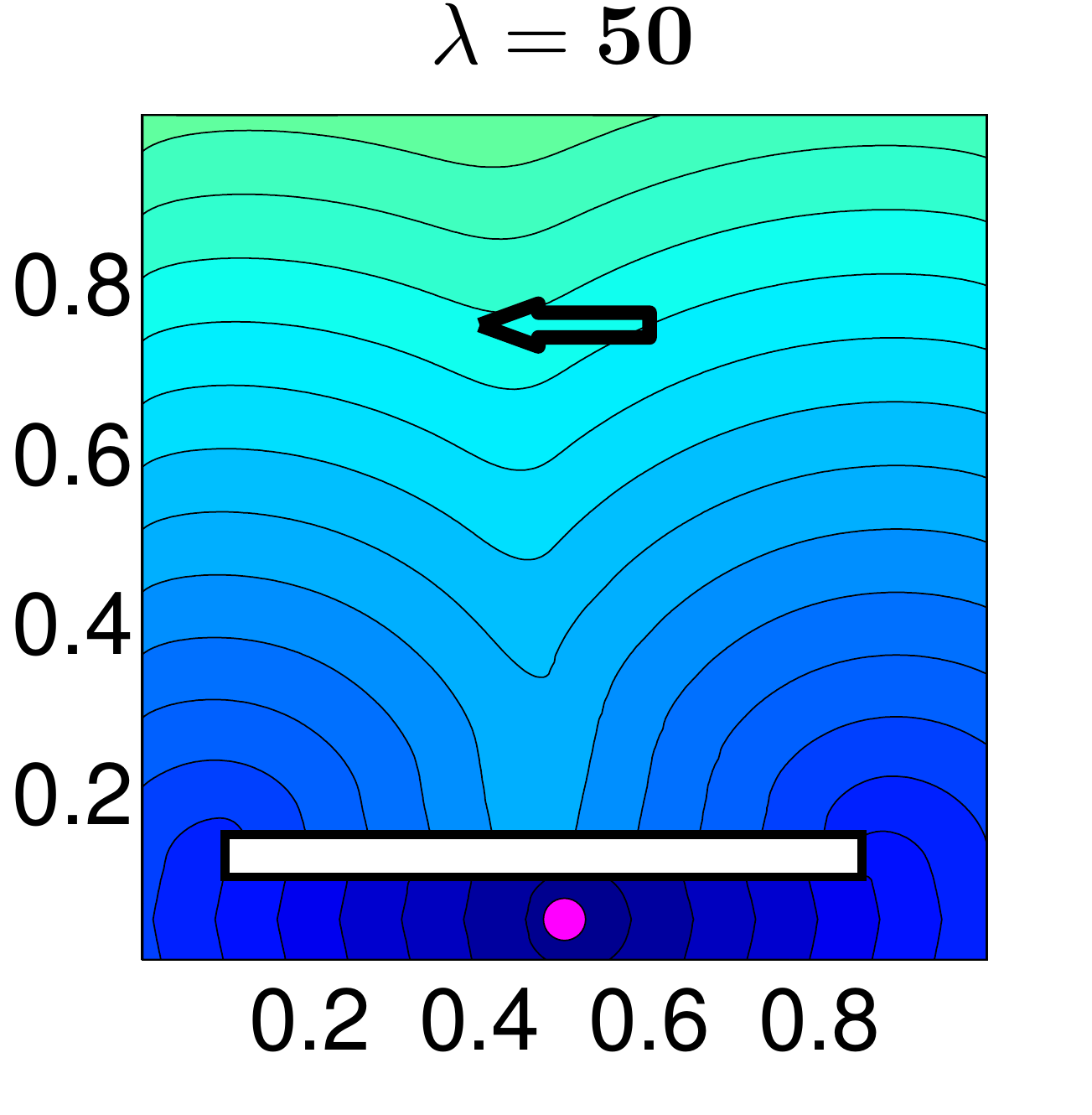}
}

\centerline{
\includegraphics[width=0.25\textwidth, height=0.01\textwidth]{Figures/blank.png}
}

\centerline{
\includegraphics[width=0.55\textwidth, height=0.2\textwidth]{Figures/colorbar.\FIGONETYPE}
\includegraphics[width=0.25\textwidth, height=0.25\textwidth]{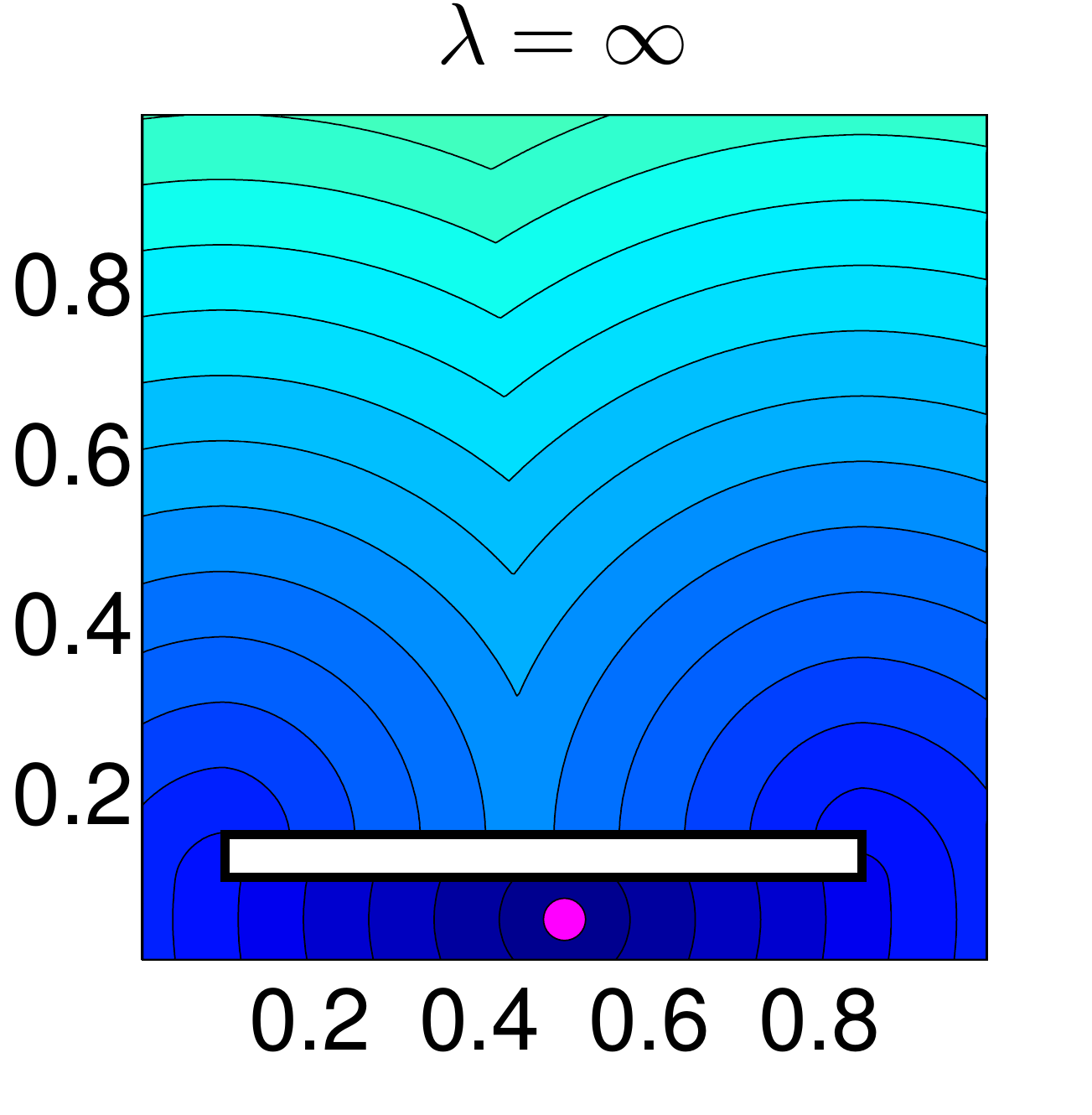}
}
\caption{
The value functions for $\lambda=0, \, \lambda=1, \, \lambda=10, \, \lambda = 50 \text{ and } \lambda=\infty$.
   Wind directions represented by arrows.
}
\label{figValueFunctions}
\end{figure} 
In each subfigure there is a clearly visible ``shockline'' above the obstacle, where the gradient of the value function is undefined.  The optimal trajectory runs clockwise or counterclockwise around the obstacle depending on whether our starting position is to the right or to the left of the shockline.  The optimal direction is not unique on the shockline with both approaches yielding the same (expected) time to the target for these starting locations.
It is important to note that the locations of shocklines are mode and $\lambda-$dependent.
When $\lambda = 0$, the shocklines of both modes are very sharp and almost in the center of the figure.
As $\lambda$ increases, the shocklines move left or right (depending on the mode) away from the center and  become less pronounced.  (E.g., for $\lambda=10$ and most starting positions sufficiently far North of the obstacle, it appears to be optimal to row South, directly toward the obstacle, instead of aiming for one of its corners.   The decision on whether to go clockwise or counterclockwise is postponed until we are closer to the obstacle.)
But when $\lambda$ becomes even higher, the shocklines are sharp again and return to the center.

We also use these computations to verify that the value functions of different modes become more similar as $\lambda$ increases.  Figure \ref{fig:ModeDifference} is fully in agreement with the asymptotic rate \eqref{eq:upper_rate} and the upper bound \eqref{eq:upper_bound}.
\begin{figure}[!ht]
\centerline{
\includegraphics[width=0.5\textwidth, height=0.35\textwidth]{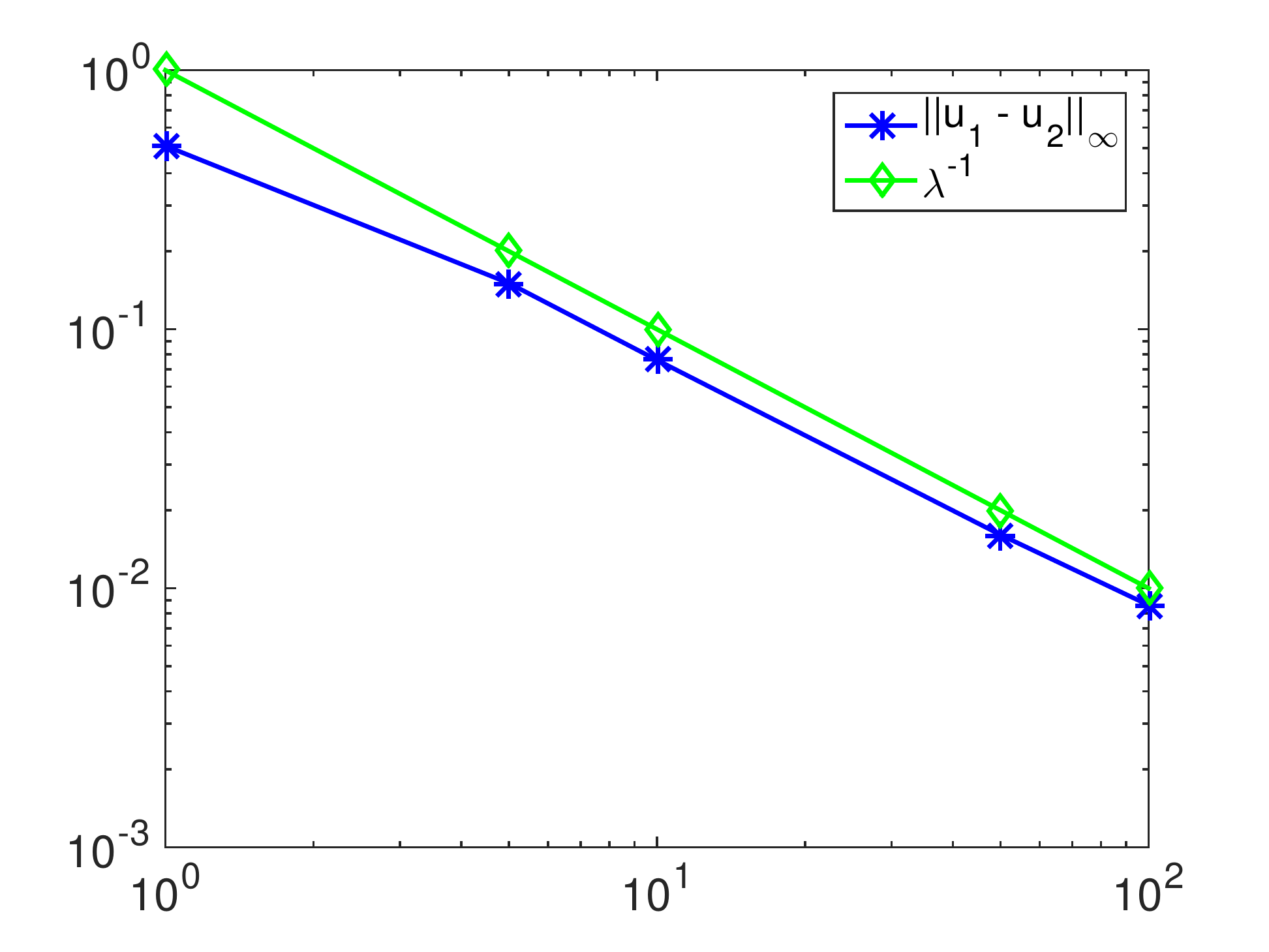}
}
\caption{
The mode differences $\|u_1 - u_2\|_{\infty}$ plotted for $\lambda = 1, 5, 10, 50, 100.$ 
The case $\lambda=0$ is not plotted, but $\|u^0_1 - u^0_2 \|_{\infty} \approx 0.8518.$
}
\label{fig:ModeDifference}
\end{figure}

\subsection{Path Planning and Simulations}\label{ss:simulations}
\def \FIGTHREETWOTYPE{eps}
\begin{figure}[!ht]
\centerline{
\includegraphics[width=0.45\textwidth, height=0.33\textwidth]{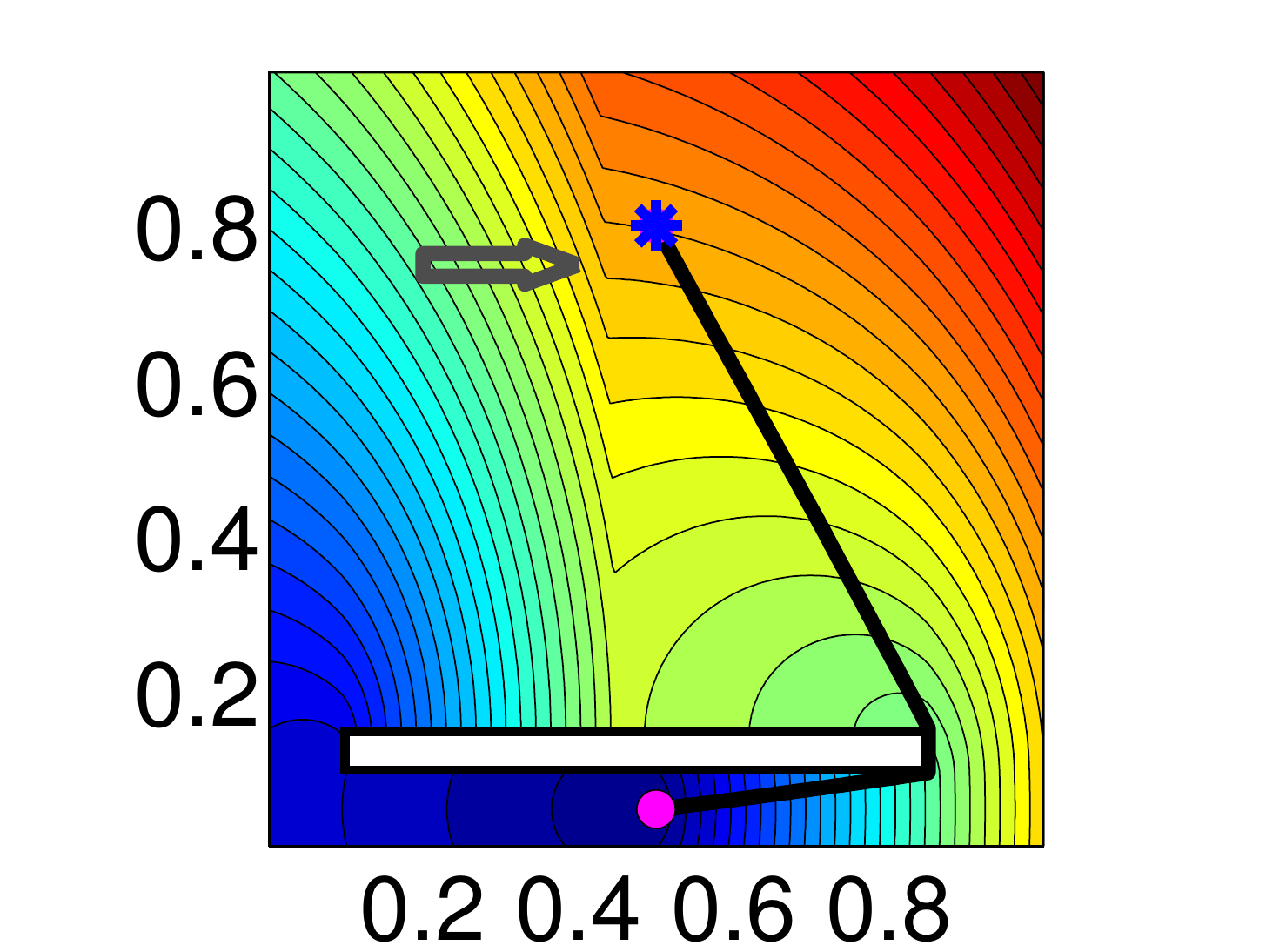}
\includegraphics[width=0.45\textwidth, height=0.33\textwidth]{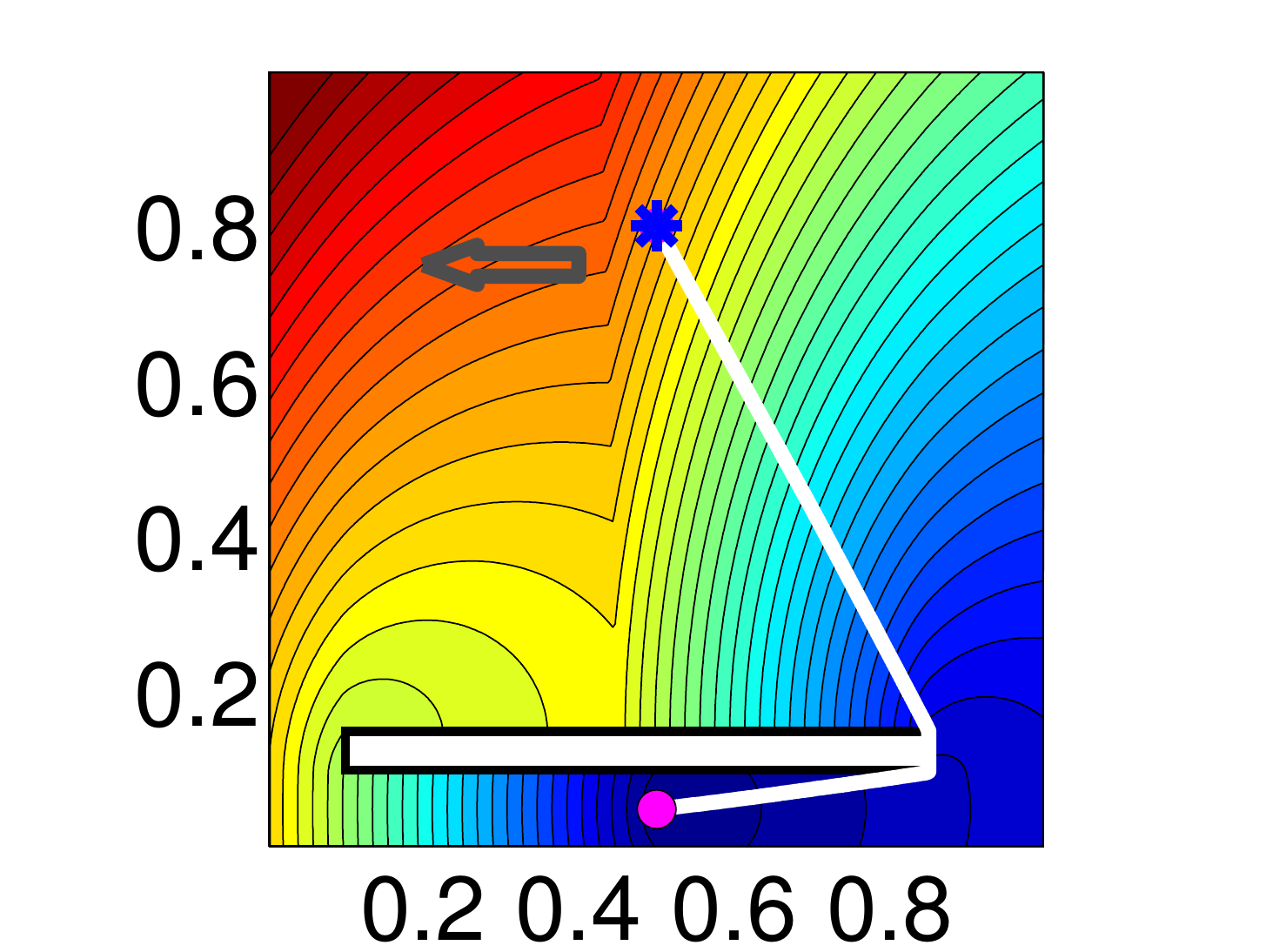}
}
\centerline{
\includegraphics[width=0.45\textwidth, height=0.33\textwidth]{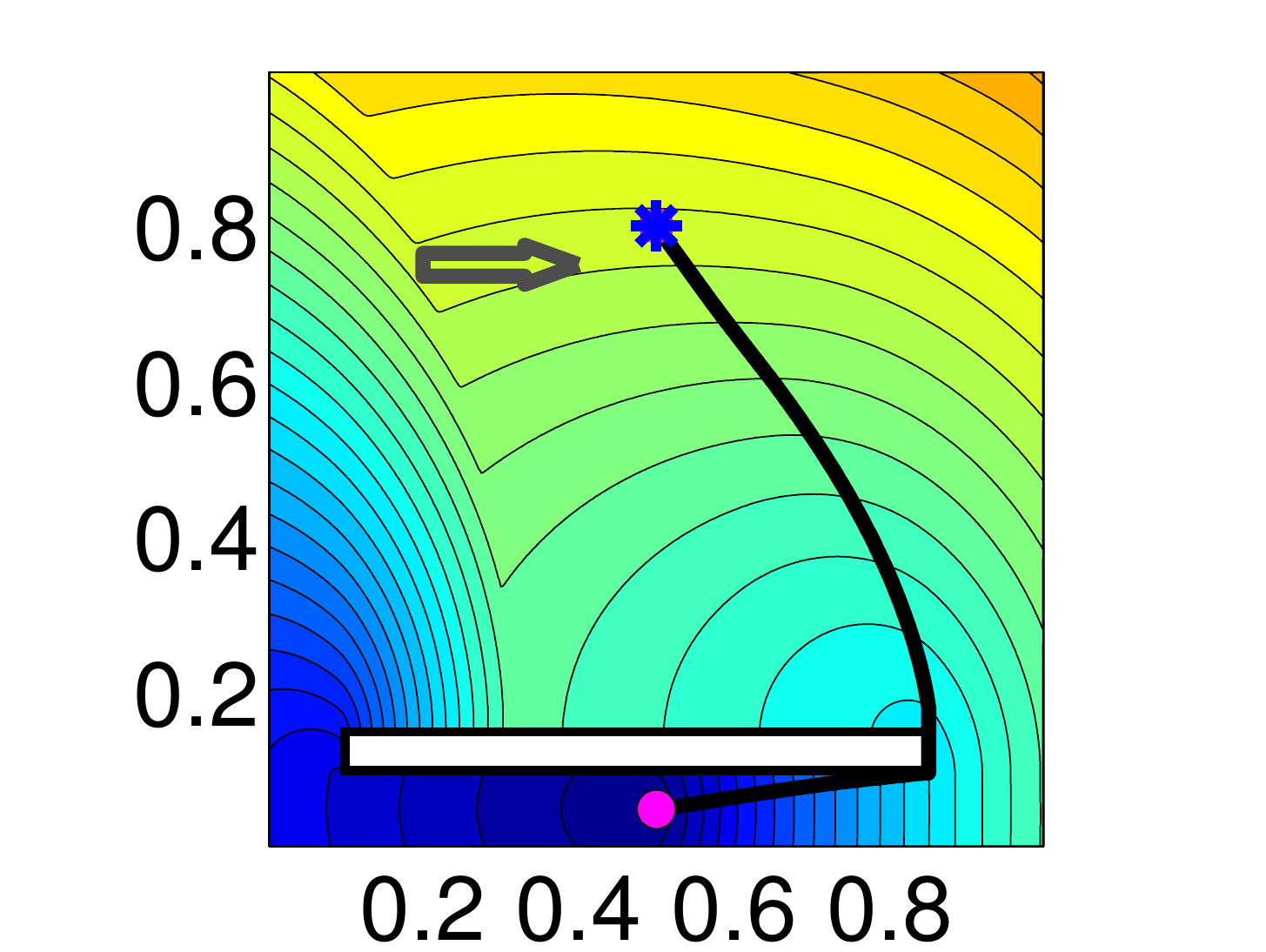}
\includegraphics[width=0.45\textwidth, height=0.33\textwidth]{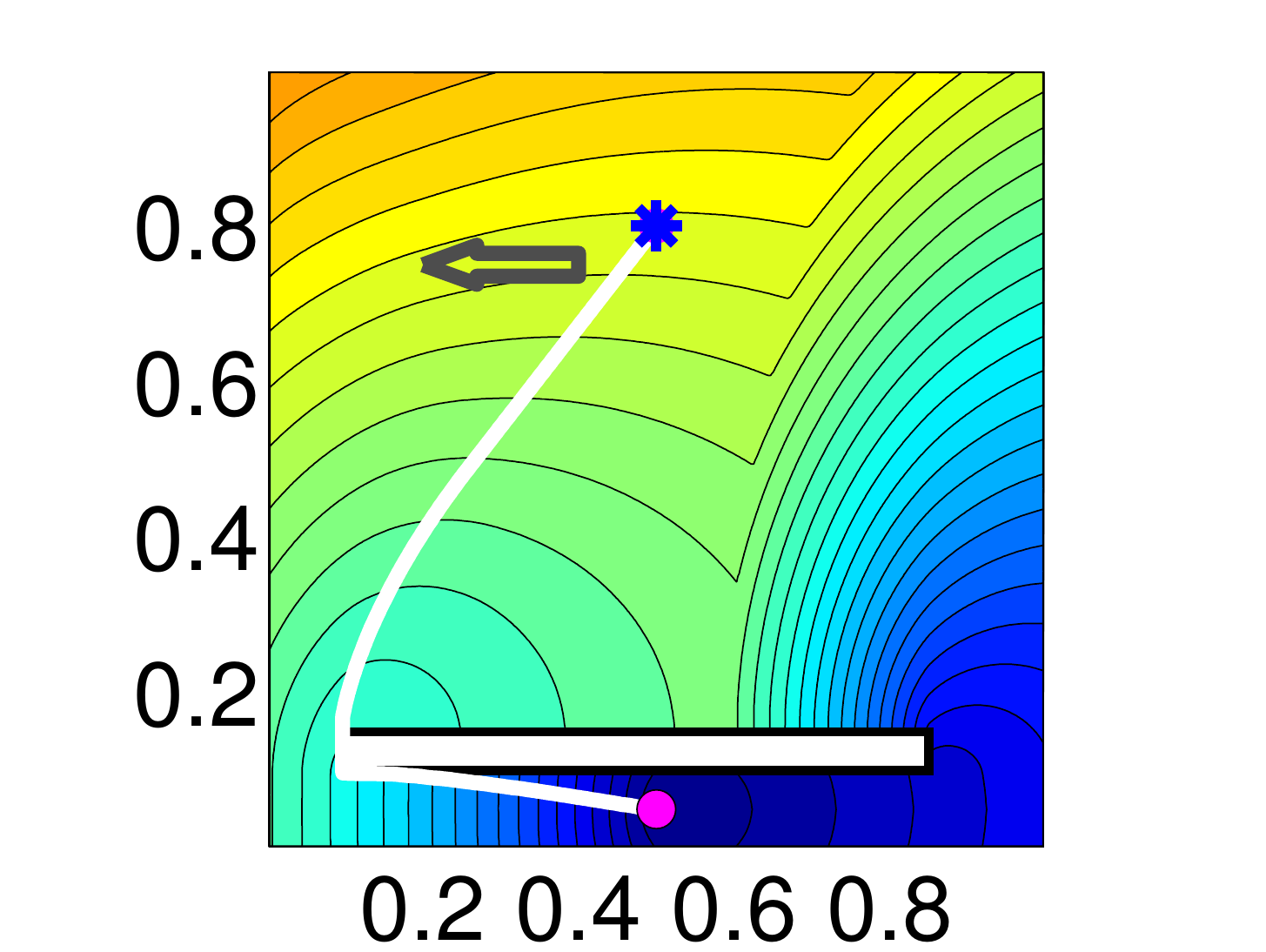}
}
\centerline{
\includegraphics[width=0.5\textwidth, height=0.2\textwidth]{Figures/colorbar.\FIGONETYPE}
}
\caption{
``Coupled'' vs ``uncoupled'' path planning.
A reference trajectory shown for the case when no wind transitions happen before we reach the target.
The color coding of trajectories corresponds to the current wind direction: black is to the East while white is to the West.
The first row: uncoupled planning (based on $u^0_i(\x)$).
The second row: coupled planning  under the assumption that $\lambda_{12}=\lambda_{21}=1$.
}
\label{figSinglePathNoTrans}
\end{figure}

\def \FIGONETYPE {pdf}
\def \FIGTHREETYPE {eps}

\begin{figure}[!htbp]
\centerline{
\includegraphics[width=0.45\textwidth, height=0.33\textwidth]{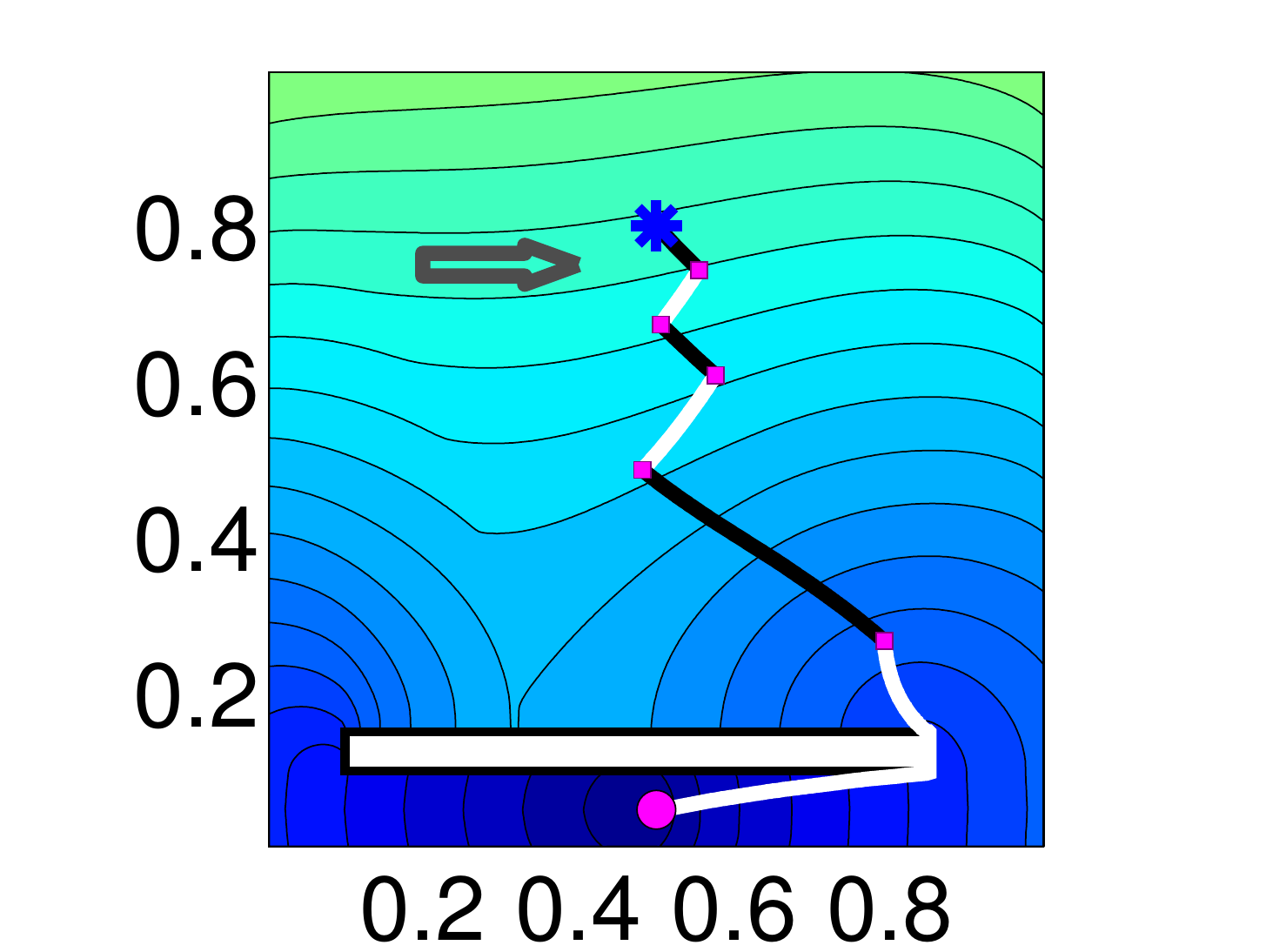}
\includegraphics[width=0.45\textwidth, height=0.33\textwidth]{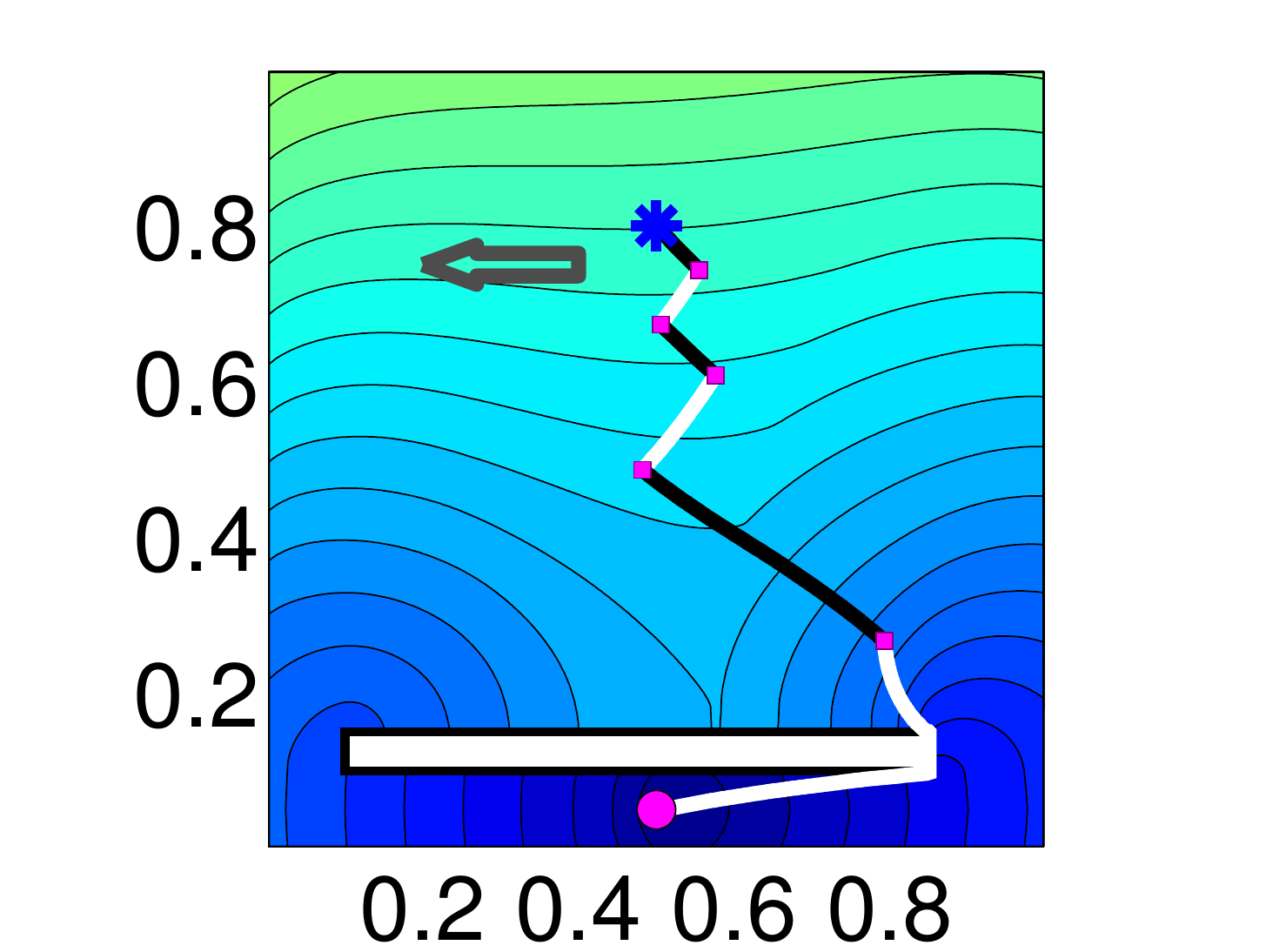}
}
\centerline{
\includegraphics[width=0.45\textwidth, height=0.33\textwidth]{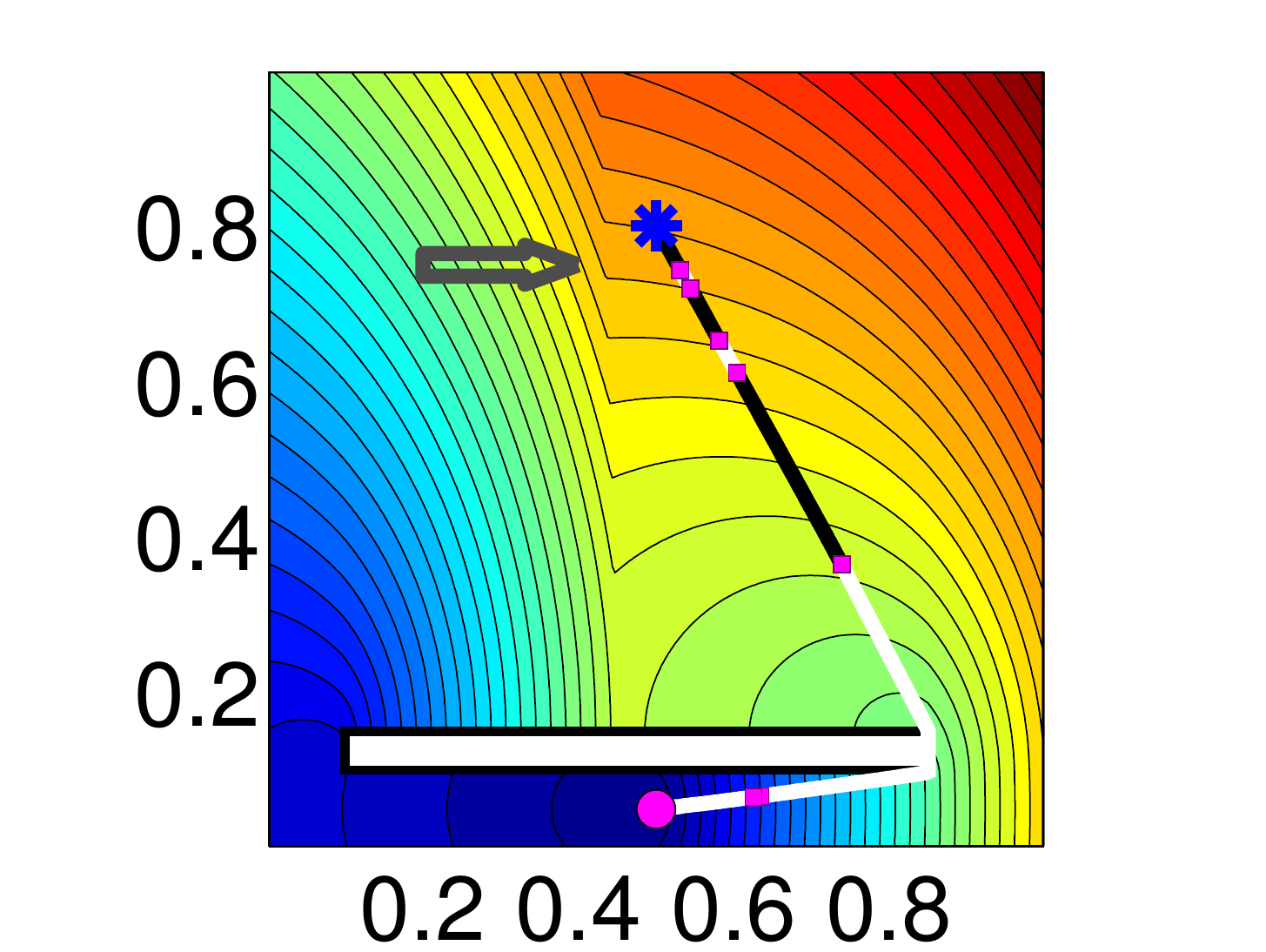}
\includegraphics[width=0.45\textwidth, height=0.33\textwidth]{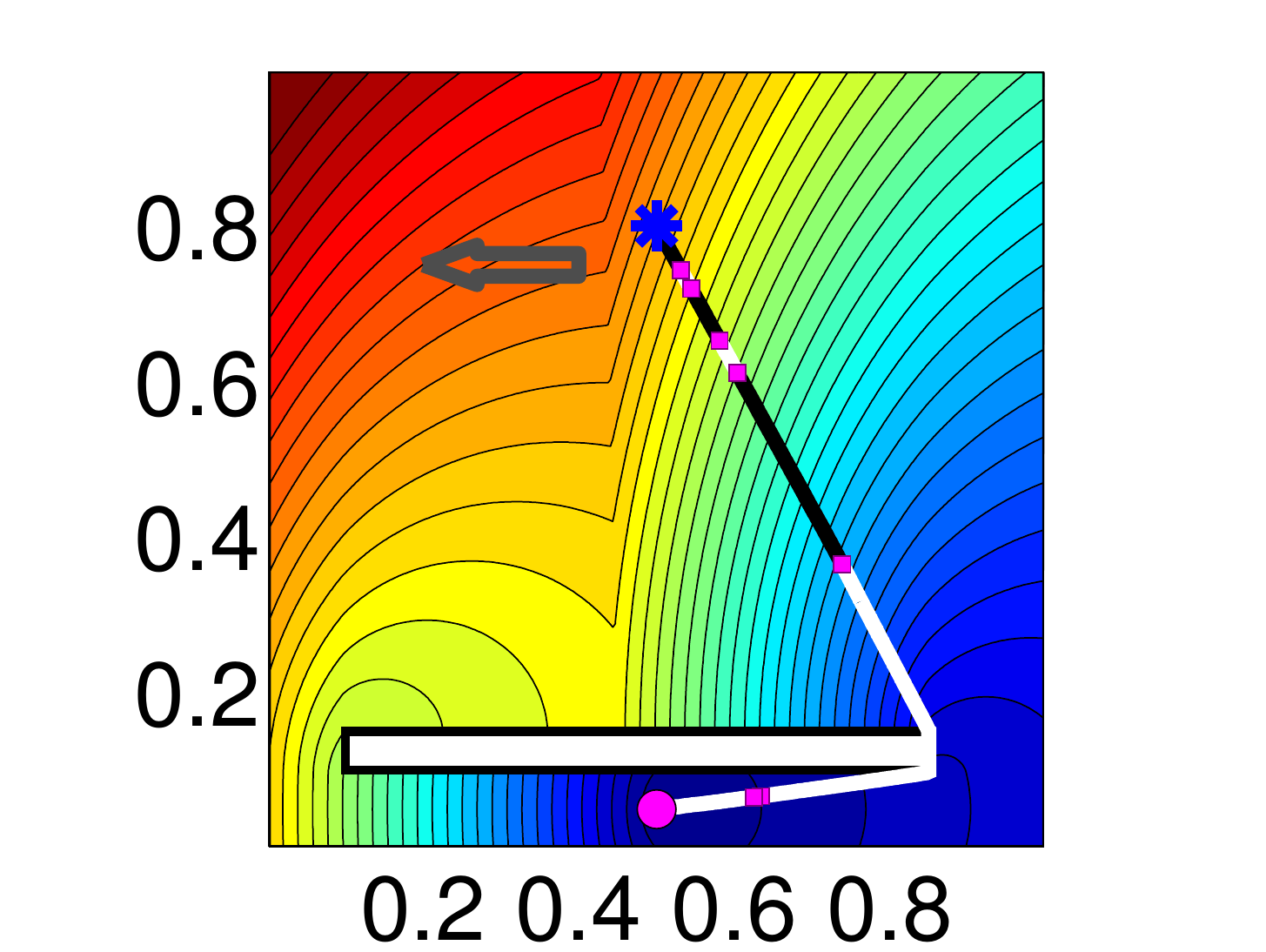}
}
\centerline{
\includegraphics[width=0.45\textwidth, height=0.33\textwidth]{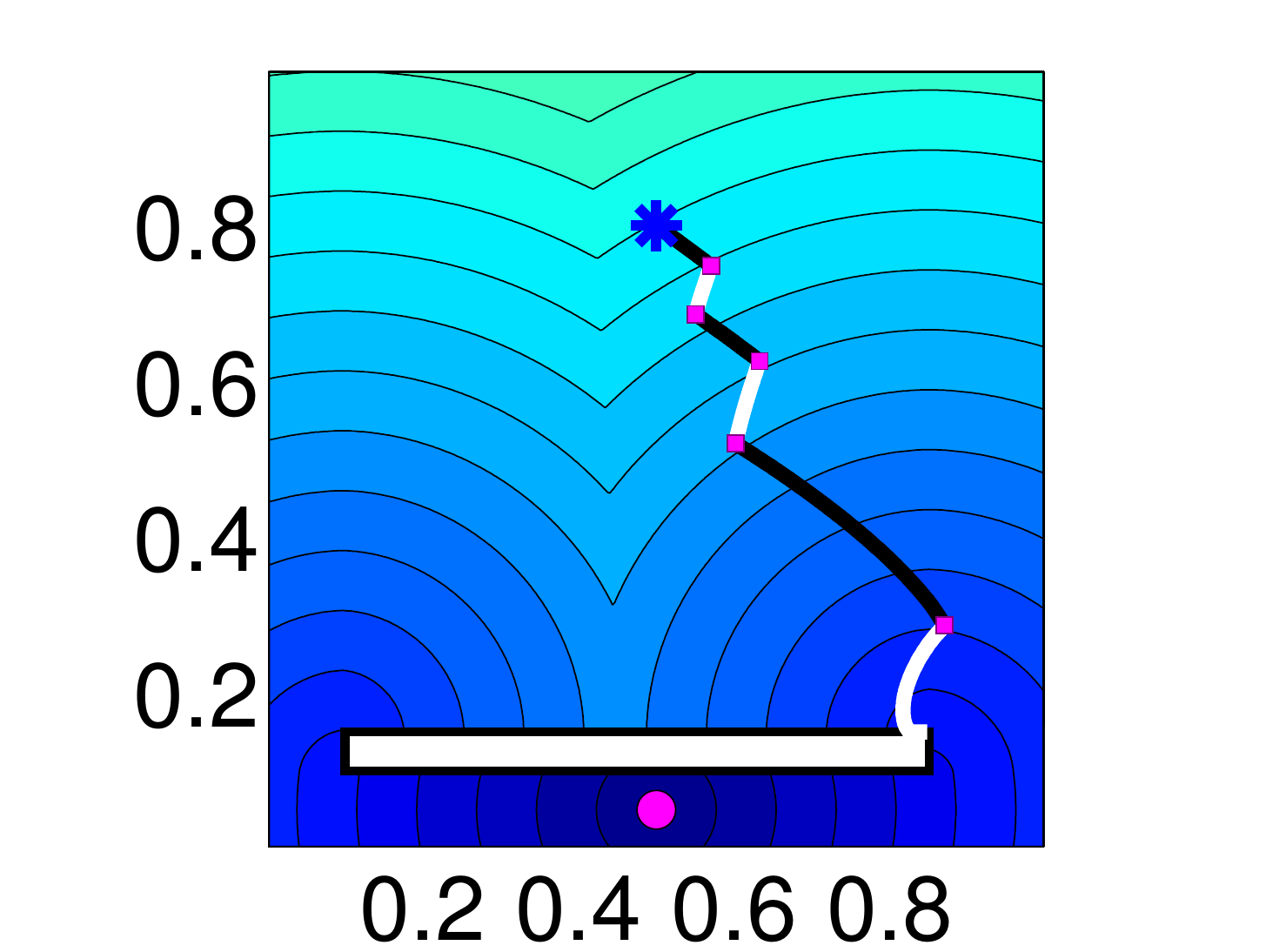}
\includegraphics[width=0.45\textwidth, height=0.2\textwidth]{Figures/colorbar.\FIGONETYPE}
}
\caption{
Trajectories based on coupled, uncoupled and infinite transition rate planners.
   The blue asterisk is the starting position $\xhat$, and the initial wind direction is to the East.
   The magenta disk is the target.
   Wind directions represented by grey arrows.
   The same sequences of wind-switching times (based on the ``real'' transition rate $\lambda_{12}=\lambda_{21}=10$) used in all cases.
   The wind switch locations are shown by purple squares.
   The color coding of trajectories corresponds to the current wind direction: black is to the East while white is to the West.
   The first row: trajectory planning based on the correct/real transition rates.
   The second row: ``uncoupled'' planning (based on a no-switches assumption).
   The third row: infinite-transition-rate planning (we no longer keep track of the current mode) -- the target is never reached since we collide with an obstacle.
}
\label{figSinglePath}
\end{figure}

We perform a series of Monte-Carlo trials for trajectories starting from the same position/mode,
but using randomly generated sequences of mode-switching times.

Once the rate of switching $\lambda>0$ is fixed, we can compute the value function and
define the optimal feedback policy $\balpha_*(\x,i)$ by using the minimizing value $\ba$ in equation \eqref{coupledHJB}.
However, our current goal is to explore the performance changes due to the use of asymptotic approximations:
both $u^\infty(\x)$ and $u^0_i(\x)$ are cheaper to compute than $u_i(\x)$ and define corresponding
``optimal'' feedback policies ($\balpha_*^{\infty}$ and $\balpha_*^0$ respectively).  In the path planning simulations,
these policies can be used despite the fact that the random mode-switching times correspond to our specific $\lambda$.
Since we are discussing the discrepancy of ``assumed'' and ``real'' $\lambda$ values, it is natural to adopt the notation of section \ref{ss:approx}:
we will refer to $u$ as $u^r$, while $u^0$ and $u^{\infty}$ correspond to $u^p$.
Equation \eqref{eq:urp} can be used to recover the expected time-to-target $u^{r,p}$ resulting from
this approximate planning.
However, we prefer to conduct Monte-Carlo simulations instead, since they provide more information about the result:  approximating the actual time-of-arrival
distribution rather than just the expectation.

In our experiments, the starting point is always at ${\xhat} = (0.5,0.8)$, and
the trajectories are color coded based on
the current wind direction: black is for eastward while white is for westward.
Given the current position and mode, ${(\x, i)} \in \Omega \times \M$,
we use a constant control $\balpha_*({\x, i})$ over the next $\Delta t$ seconds.
(We have used $\Delta t=10^{-3}$, which is sufficiently small for the $\lambda$ values 1 and 10 considered in these tests.)
The position is shifted by
$\Delta t \f_i(\x, \balpha_*({\x, i}))$
and the mode possibly changes based on $\lambda$.
The process is repeated until we reach the target.
\newcommand{\SwitchTime}{0.029, 0.064, 0.098, 0.159, 0.285, 0.689, 0.706}
\newcommand{\TCoupled}{0.589}
\newcommand{\TUncoupled}{0.741}
\newcommand{\TInfTrans}{0.423}
Figure \ref{figSinglePathNoTrans} shows the trajectories without any mode switches based on the
$\balpha_*^0$ planning and $\balpha_*$ planning with $\lambda = 1$.
Since the Hamiltonians are homogeneous in $\x$,
the trajectories resulting from $\balpha_*^0$ are piecewise-straight lines.
For this particular starting location,
it is optimal to go clockwise around the obstacle even if the wind blows eastward -- since the counterclockwise alternative path is slightly longer (the obstacle is off center).
However, if we take the correct switching rate into account, use $\balpha_*$ planning and start with the wind blowing eastward, it makes sense to go counterclockwise.
The intuitive reason is that the wind is likely 
to switch toward West before we reach the target 
making our motion on the final stretch much faster.
Interestingly, the trajectory we follow in anticipation of this switch is not piecewise straight.
This is entirely due to a weak coupling of the PDEs since the Hamiltonians remain the same.
However, this logic backfires in the particular scenario illustrated by Figure \ref{figSinglePathNoTrans}:
since the wind never changes, the actual time along the trajectory based on $\balpha_*$ with the wind blowing eastward is
$T^{\lambda} = 1.179$, which is larger than our expected $E[T^{\lambda}] = u^{\lambda}_2(\xhat) = 0.915$
and certainly larger than the no-switch optimal $u^{0}_2(\xhat) = 1.073$.
Luckily, this outcome is not the most likely: the probability of the wind direction remaining the same that long is
$\exp(-1.179) \approx 31\%.$

Of course, there can be many mode switches happening before we reach the target, particularly when $\lambda$ is larger.
Figure \ref{figSinglePath} illustrates the results of a single simulation with many mode switchings based on $\balpha_*$, $\balpha_*^{0}$, and $\balpha_*^{\infty}$ path planning.
Using $\lambda = 10$, we have generated a sequence of switching times: (\SwitchTime, ...).
With the correct feedback optimal controls $\balpha_*$, the target is reached at the time $T^{\lambda} = \TCoupled{}$.
With the uncoupled ``optimal'' control $\balpha_*^0$, two more mode switches occur by the new arrival time $T^0 = \TUncoupled{}$.
Finally, if we use $\balpha_*^{\infty}$,
the target is not reached at all since a collision with an obstacle happens at the time $t = \TInfTrans{}$.
This outcome is not uncommon when using the infinite-transition-rate planning despite the fact that
the real value of $\lambda$ is finite.
In our experiments based on $\balpha_*^{\infty}$,  collisions occurred in $22.5\%$ of simulations for $\lambda=1$
and in $42.5\%$ of simulations for $\lambda=10$.

\begin{figure}[!htbp]
\centerline{
\includegraphics[width=0.35\textwidth, height=0.265\textwidth]{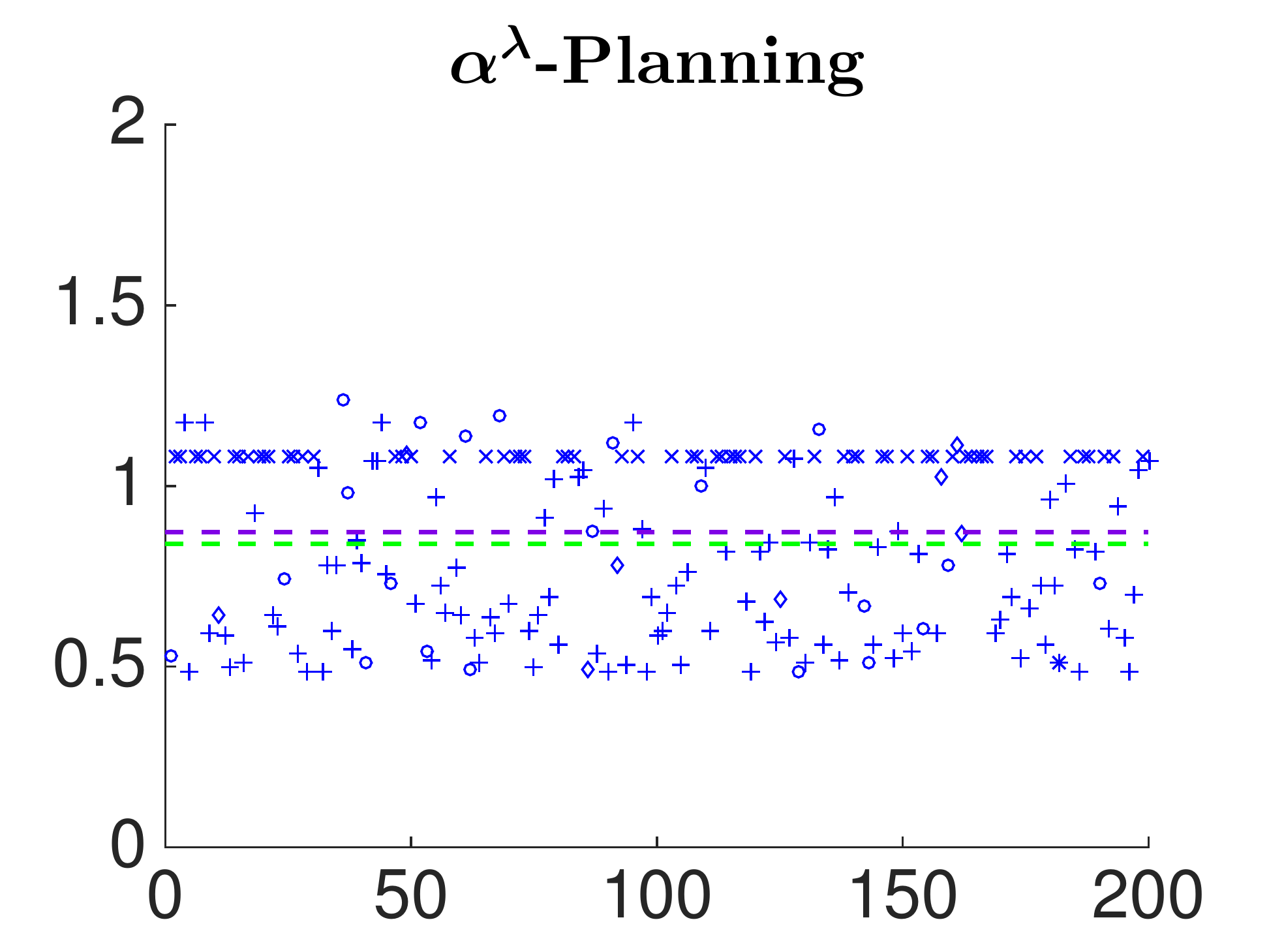}
\includegraphics[width=0.35\textwidth, height=0.265\textwidth]{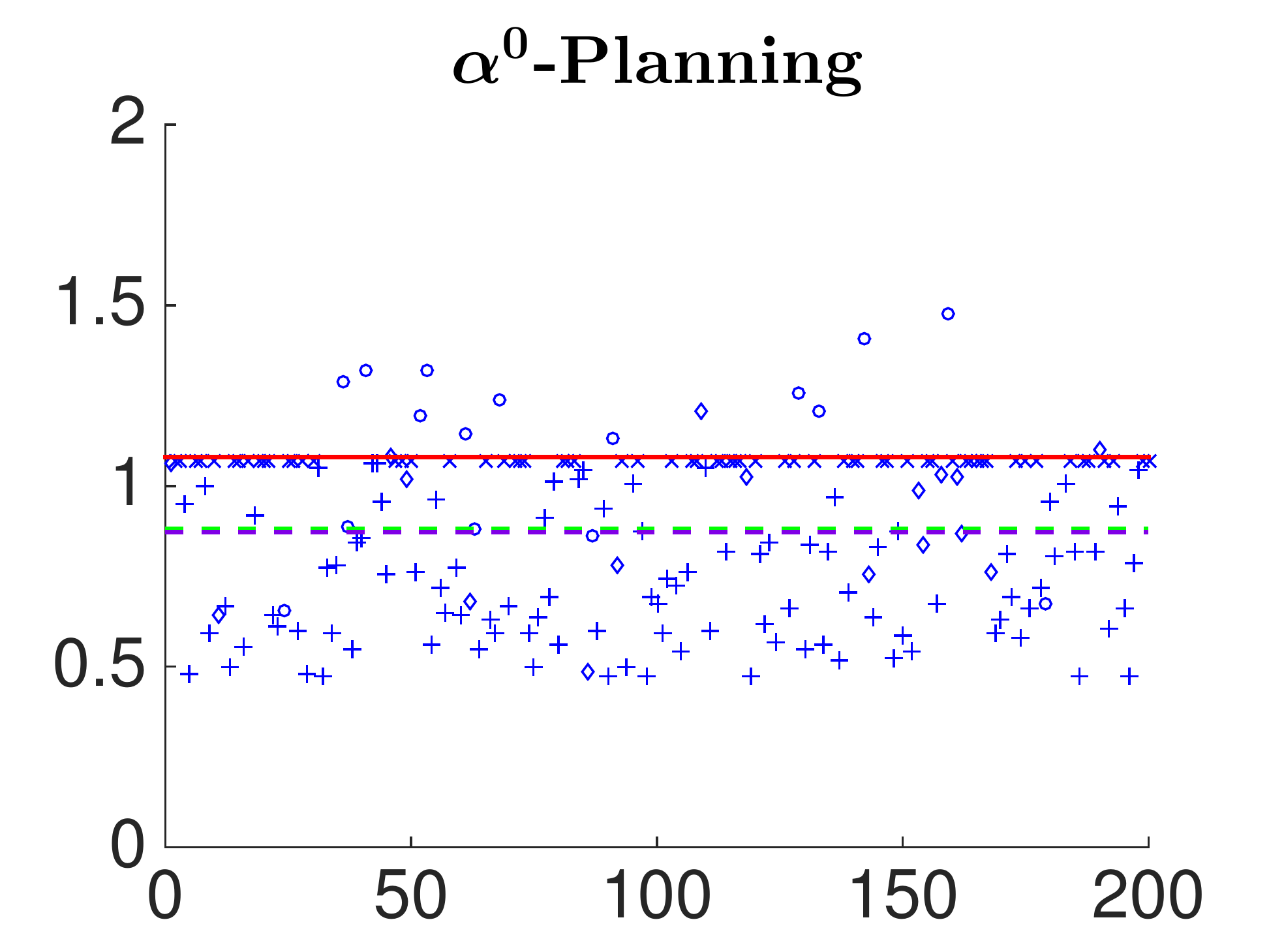}
\includegraphics[width=0.35\textwidth, height=0.265\textwidth]{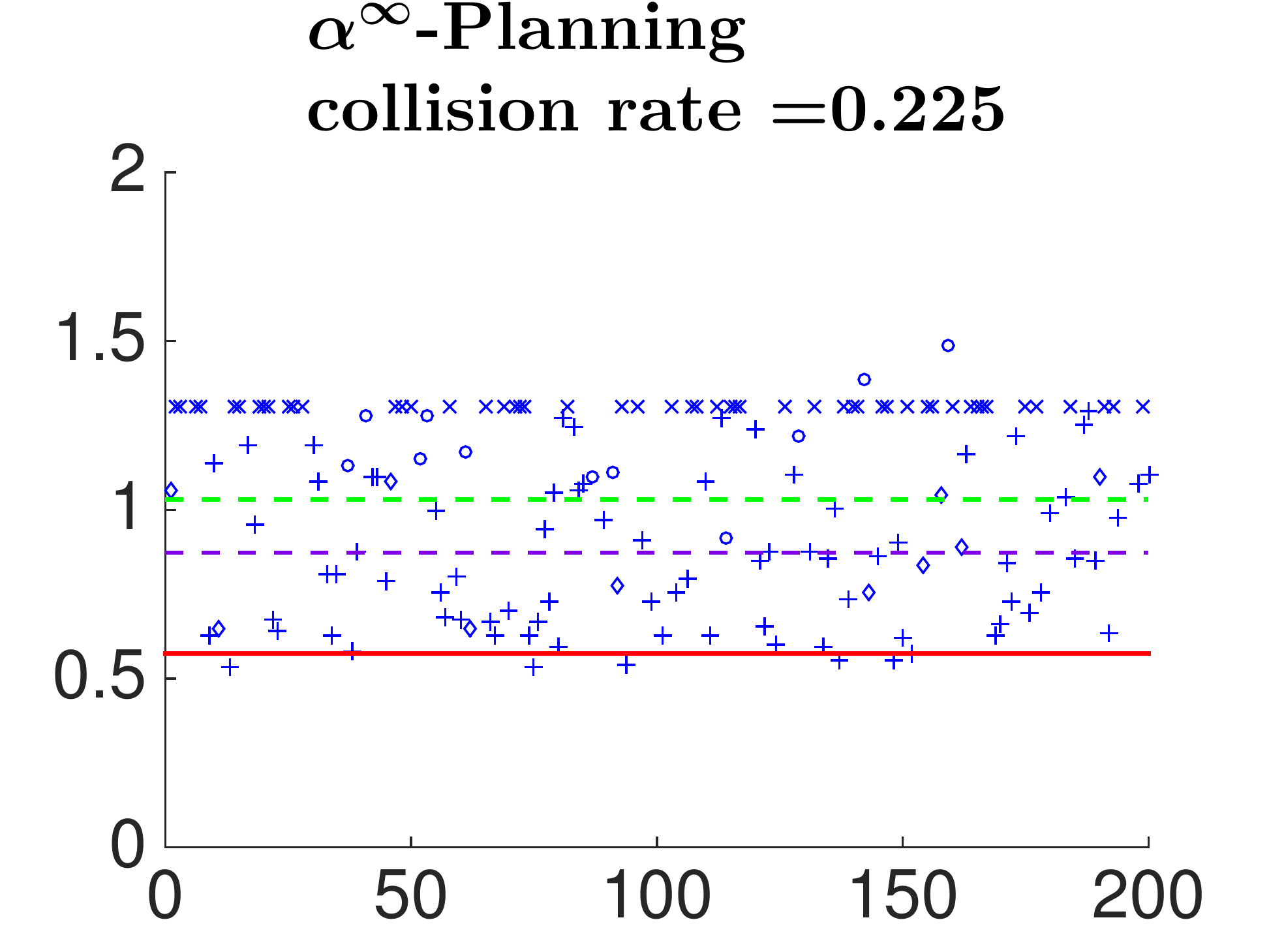}
}
\caption{
200 simulations starting at $\big( (0.5, 0.8), 1 \big)$ when real transition rate $\lambda=1$.
$x$ axis: the number of simulations;
$y$ axis: the time cost.
The labels of the dots indicates the number of transitions occurred before the boat reaches the target:
 $\times \to 0$ ,  $\plus \to 1$, $\circ \to 2$, $\Diamond \to 3$, $* \to 4$.
Green line: approximate $u^{r, p}$ (observed average cost) ;
 red line: $u^p$;
 purple line: $u^r$.
 The optimum expected cost $\approx 0.873$. The observed averages of coupled (optimal) planning, uncoupled planning and infinite-transition-rate planning are approximately 0.840, 0.882 and 1.030 respectively.
}
\label{figCostLow}
\end{figure}

\begin{figure}[!htbp]
\centerline{
\includegraphics[width=0.35\textwidth, height=0.265\textwidth]{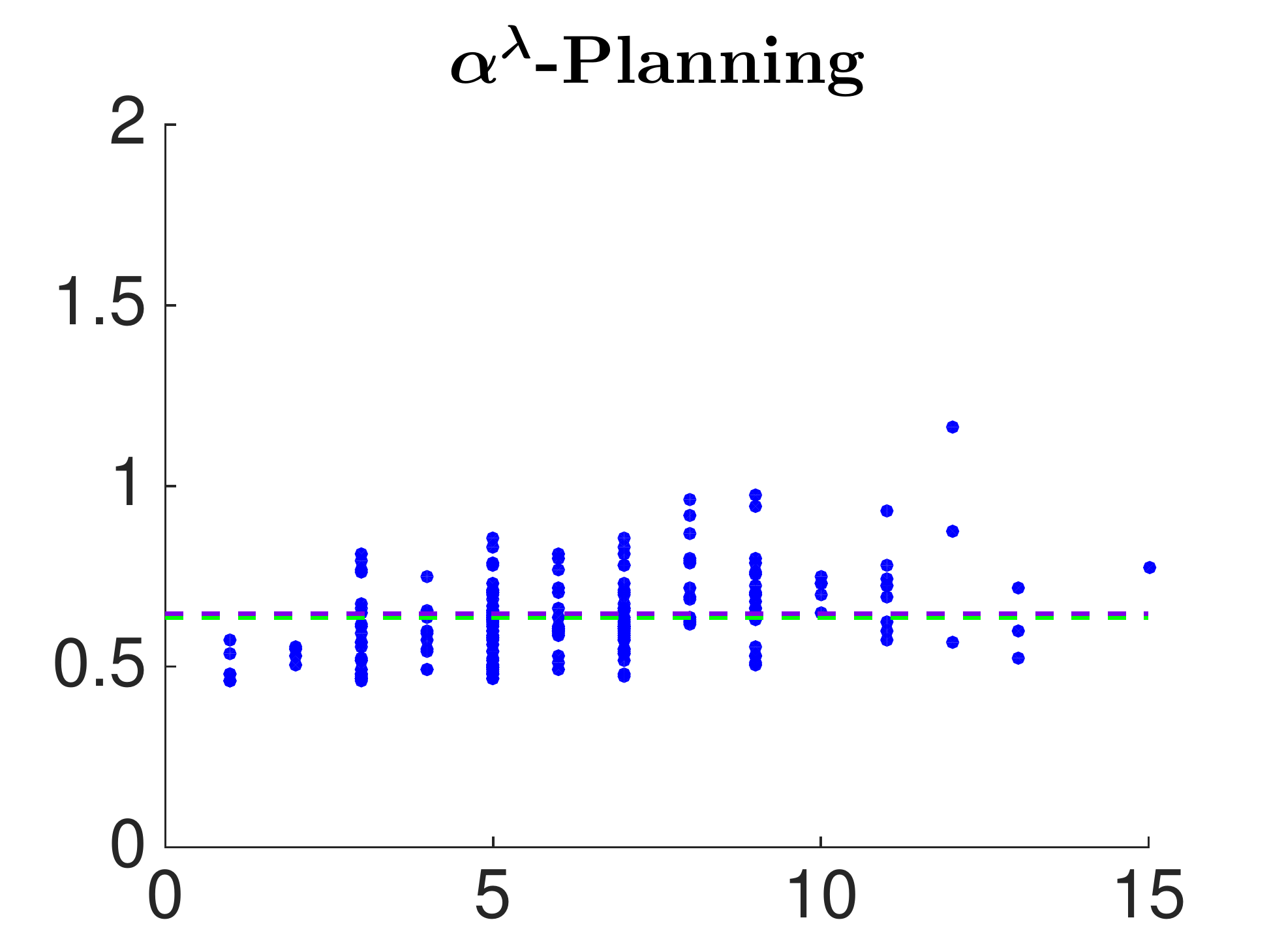}
\includegraphics[width=0.35\textwidth, height=0.265\textwidth]{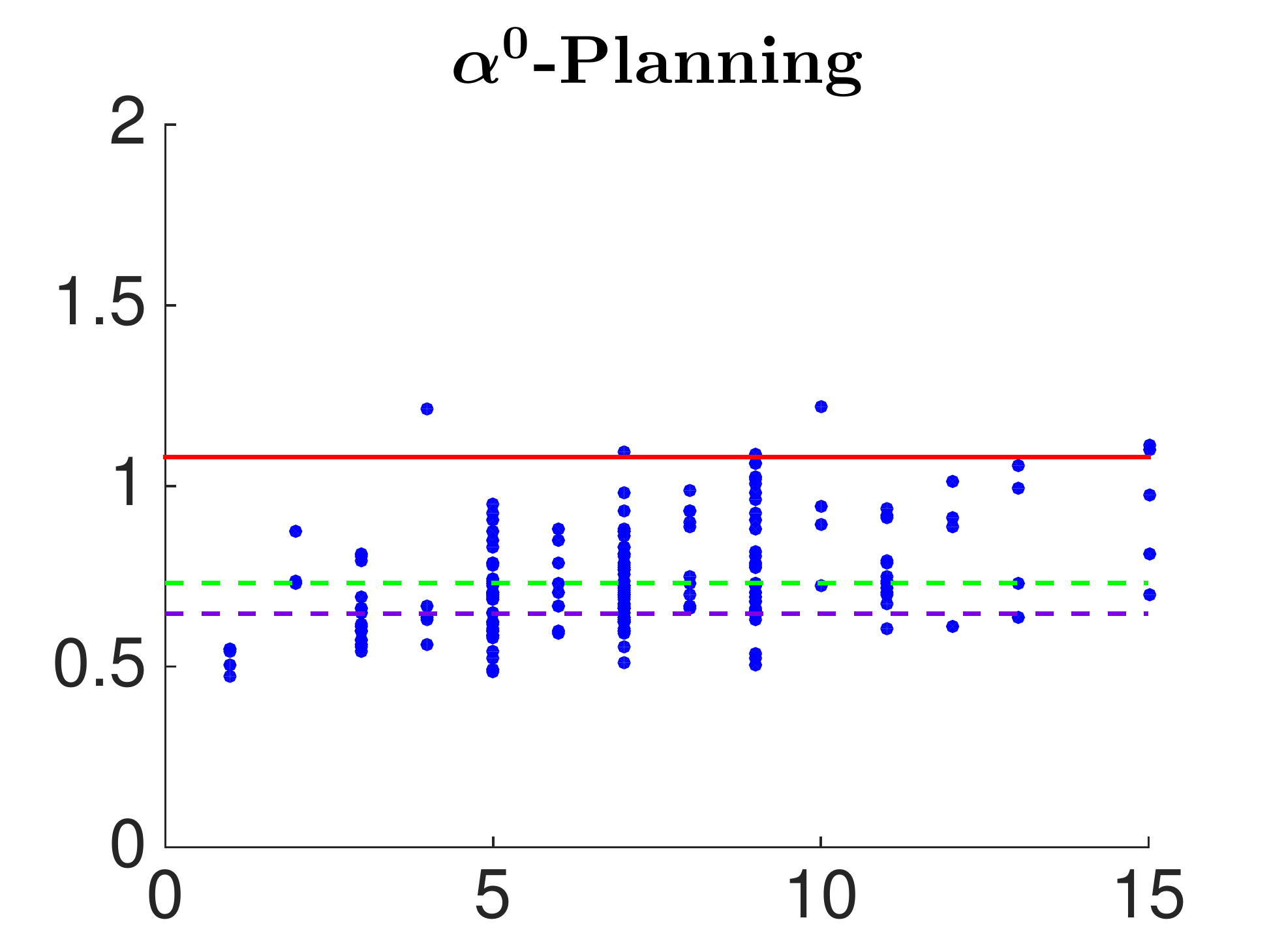}
\includegraphics[width=0.35\textwidth, height=0.265\textwidth]{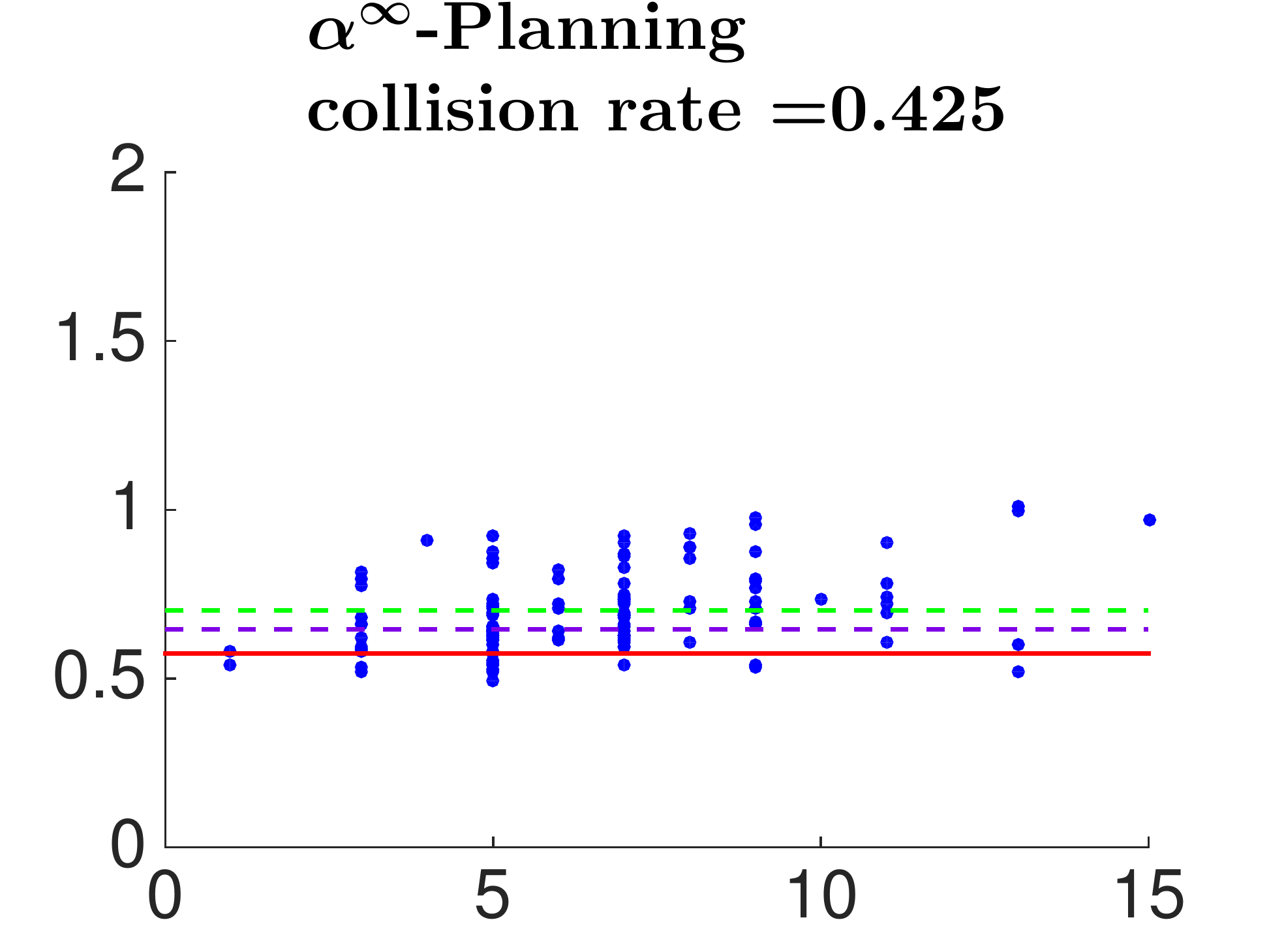}
}
\caption{
200 simulations starting at $\big( (0.5, 0.8), 1 \big)$, when real transition rate $\lambda=10$.
$x$ axis:  the number of transitions that happens in a particular simulation before the swimmer reaches the target;
 $y$ axis: the time cost.
Green line: approximate $u^{r, p}$ (observed average cost); red line: $u^p$; purple line: $u^r$.
 The optimum expected cost $\approx 0.646$. The observed averages of coupled (optimal) planning, uncoupled planning and infinite-transition-rate planning are approximately 0.636, 0.731 and 0.702 respectively.
}
\label{figCostHigh}
\end{figure} 
While Figure \ref{figSinglePath} is based on one particular sequence of switching times,
Figures \ref{figCostLow} and \ref{figCostHigh} show the time-of-arrival statistics
gathered in 200 Monte-Carlo simulations
with $\lambda = 1$ and $\lambda = 10$ respectively.
In Figure \ref{figCostLow} different markers are used to indicate the number of mode switches experienced in each simulation.
The green line shows the observed sample average, an approximation of $u^{r,p}$, and the red line corresponds to the planner's naive expectations (based on $u^0 ({\x, i})$ and $u^{\infty}({\x, i})$ respectively).
For reference, the purple line shows the correct attainable minimum average time based on $u^r(\z)=u^{\lambda}({\z})$.
Figure \ref{figCostHigh} presents the same information for $\lambda = 10$,
but uses the x-axis to represent the number of mode switches.
These scatter plots report the arrival times only for those $\balpha_*^{\infty}$-based simulations that reach the target safely.

We note that for the ``coupled planning'' our observed sample average is the lowest
and quite close to the optimal $u^{\lambda}$.
Since $m(t)$ is based on that $\lambda$ value, this is hardly surprising.  As expected, when $\lambda$ is low,
$u^{\lambda}$ is closer to $u^0$, and $u^{\infty}$ becomes a better approximation for larger $\lambda.$  But from a practical point of view, a much more relevant question is the relative difference between $u^{r,p}$ and $u^r$.
For example, $(u^{\lambda,0} - u^{\lambda})/u^{\lambda}$ can be used to measure the percentage of performance degradation due to using the uncoupled planning.  This quantity is only 1\% when $\lambda=1$, but already
13.2\% when $\lambda = 10$.  (The corresponding numbers for the $\balpha_*^{\infty}$-based planning are
18\% and 8.7\%, but their practical relevance is lower since they are based on a subset of non-colliding trajectories.)


\section{More Wind Directions and the Limiting Case}\label{sec:BM}
\subsection{Continuous Mode Space Model}
In the rowboat problem of section \ref{sec:experiments}, we have assumed that the wind has only two possible directions.
However, in real world the wind direction is a continuous variable.
We will now consider a different (though also admittedly unrealistic) model, where the wind speed $s_w$ is constant, the wind velocity is
$
\w(\theta)=s_w(\cos \theta , \sin \theta )^T,
$
and $\theta$ undergoes a Brownian Motion on $\mathbb{R}$.
In this case, it is natural to pose the optimal control problem 
on an expanded state space
$\tilde{\Omega} = \Omega\times\mathbb{R}$ with the dynamics defined by
\begin{align}
\left\{
\begin{array}{ll}
\y'(t)&=\f \, \Big( \y(t),\, \theta(t),\, \ba \big( \y(t),\theta(t) \big) \Big),\\
d\theta &= \sigma dW,\\
\y(t) &= \x,\\
\theta(0)&=\theta_0,
\end{array}
\right.
\end{align}
where $W$ is a standard Wiener process, $\sigma >0$, and $(\x, \theta_0)$ encode the initial boat position and wind direction.
Assuming the running cost $C(\x, \theta, \balpha)$,
the total cost of this process is
\begin{align}
J(\x, \theta_0, \balpha)=
\int_{0}^{T}C\left(\y(t), \theta(t), \balpha\left(\y(t), \theta(t)\right) \right)dt+q(\y(T)),
\end{align}
and the value function is
$
u(\x, \theta) \; = \; \inf_{\balpha\in\mathcal{A}}\mathbb{E} [J(\x, \theta, \balpha)].
$
Starting with Bellman's optimality condition and Taylor expanding (using Ito's Lemma), we can formally obtain the HJB equation for the value function:
\begin{align}
\min_{\ba\in {A}}\{\nabla_{\x} u(\x, \theta)\cdot \f(\x, \theta, \ba)+
\frac{\sigma^2}{2} \frac{\partial ^2}{\partial \theta^2}u(\x, \theta)+C(\x, \theta, \ba)\}
\; = \;
0.
\end{align}

\noindent
Since the wind direction is periodic, 
$u(\x, \theta)$ is also periodic with respect to $\theta$:
\begin{align}\label{eq:BM_BC}
u(\x, \theta) \; = \; u(\x, \theta + 2\pi), \qquad \forall \theta \in \mathbb{R}.
\end{align}
Therefore, we just need to compute $u(\x, \theta)$ on $\Omega \times [0, 2\pi)$.
If we divide $[0, 2\pi)$ into intervals with length $\Delta \theta = \frac{2\pi}{n}$,
and introduce discrete values $\theta_i=i\Delta\theta$ for $i = 0, 1, \cdots, n - 1$,
then the second-order term, $\frac{\sigma^2}{2} \frac{\partial ^2}{\partial \theta^2}u(\x, \theta)$, can be approximated with central difference method.
We will use $\tildeu_i(\x)=\tildeu(\x, \theta_i)$ to denote this $\theta$-discretized approximation of $u(\x, i\Delta\theta)$.
These functions must satisfy the system of first-order HJB PDEs
\begin{align}
\min_{\ba\in A}\left\{ \nabla_{\x} \tildeu_i(\x)\cdot \f(\x,\theta_i,\ba)+C(\x,\theta_i, \ba) \right\}+
\frac{\sigma^2}{2(\Delta\theta) ^2}\sum_{\epsilon=\pm 1} ( \tildeu_{i+\epsilon}(\x)- \tildeu_i(\x))
\; = \;
0;
\qquad  \qquad i = 0, 1, \cdots, n-1,
\end{align}
where we have used the notation $\tildeu_n(\x) = \tildeu_0(\x)$ and $\tildeu_{-1}(\x) = \tildeu_{n-1}(\x)$.

\noindent
This can be also interpreted as a piecewise-deterministic system discussed in section \ref{sec:statement}, provided
\begin{itemize}
\item[(a)] from every mode $i=0,\cdots,(n-1)$,
the switching can happen only to its ``adjacent'' modes $(i-1)$ and $(i+1)$;
\item[(b)] the rate of switching to these adjacent modes is equal to $\frac{\sigma^2}{2(\Delta\theta)^2}=\frac{\sigma^2n^2}{8\pi^2}$.
\end{itemize}
As the number of modes grows, the original value function on $\mathbb{R}^{d+1}$ is recovered in the limit:
$$
u \left(\x,\theta \right)
 \; = \;
\lim_{n \rightarrow \infty}  \tildeu \left(\x, {\frac{2\pi}{n} \left\lfloor \frac{n\theta}{2\pi} \right\rfloor} \right).
$$


\subsection{Upper Bound on $\theta$-variability}
It is easy to derive a uniform upper bound on
$| u(\x,\theta_1) - u(\x,\theta_2) |, \; \forall \x,\theta_1,\theta_2.$
Suppose {\em the first hitting time} is $\kappa (\theta_1, \theta_2) = \min\limits_{t\geq 0}\left\{t \mid \theta(t_0+t)=\theta_2+2n\pi \text{ for some integer } n \; | \; \theta(t_0)=\theta_1\right\}$.
Since $u$ is periodic in $\theta$, we just need to consider the case $\theta_2 - 2 \pi < \theta_1 < \theta_2  < \theta_1 + 2\pi$ and
$\mathbb{E}[\kappa(\theta_2, \theta_1)] = \phi(\theta_2-\theta_1),$ where $\phi(\alpha)$ is the solution of
$$\phi''(\alpha) = -\frac{2}{\sigma^2}, \quad \forall \alpha \in (0,2\pi);  \qquad \qquad \qquad \phi(0)=\phi(2 \pi) =0.$$
Thus, $\phi(\alpha) = \alpha (2 \pi - \alpha) / \sigma^2 \leq \frac{\pi^2}{\sigma^2}.$
By symmetry the same formula is also valid for $\mathbb{E}[\kappa(\theta_1, \theta_2)].$
An argument similar to that in Appendix \ref{append:upperbound}, shows that
$$
| u(\x,\theta_1) - u(\x,\theta_2) | \; \leq \; \max\{\mathbb{E} [ \kappa (\theta_1, \theta_2)], \, \mathbb{E} [ \kappa (\theta_2, \theta_1)] \}  \leq -\frac{1}{\sigma^2} (\theta_2-\theta_1) (\theta_2-\theta_1-2\pi) \leq \frac{\pi^2}{\sigma^2}.
$$
Similar upper bounds are also easy to derive for $\| \tildeu_i - \tildeu_j \|_{\infty}$
in the $\theta$-discretized version.

\subsection{Example}
\def \FIGFIVETYPE {pdf}

\begin{figure}[!ht]
\centerline{
\includegraphics[width=0.25\textwidth, height=0.25\textwidth]{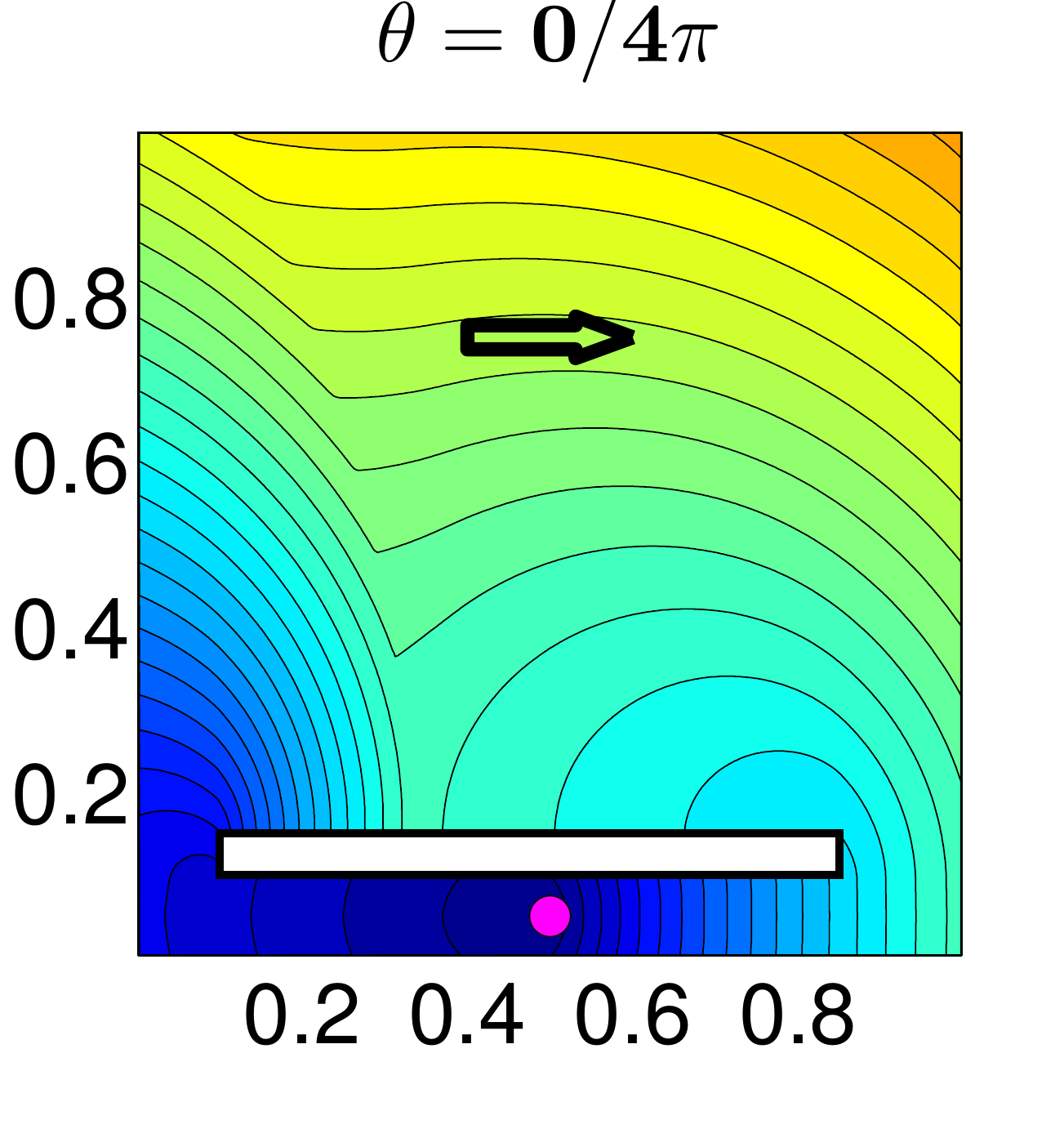}
\includegraphics[width=0.25\textwidth, height=0.25\textwidth]{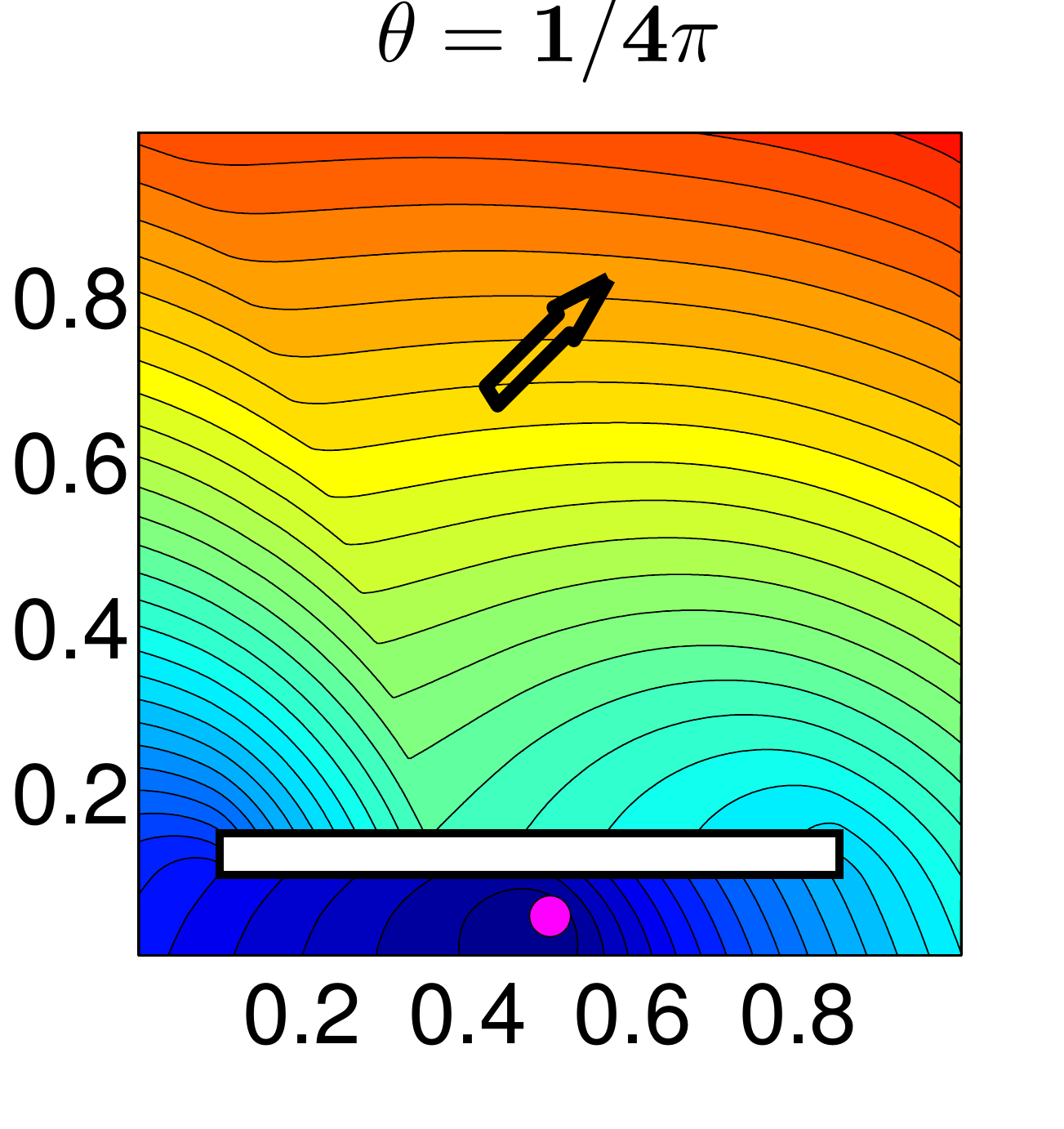}
\includegraphics[width=0.25\textwidth, height=0.25\textwidth]{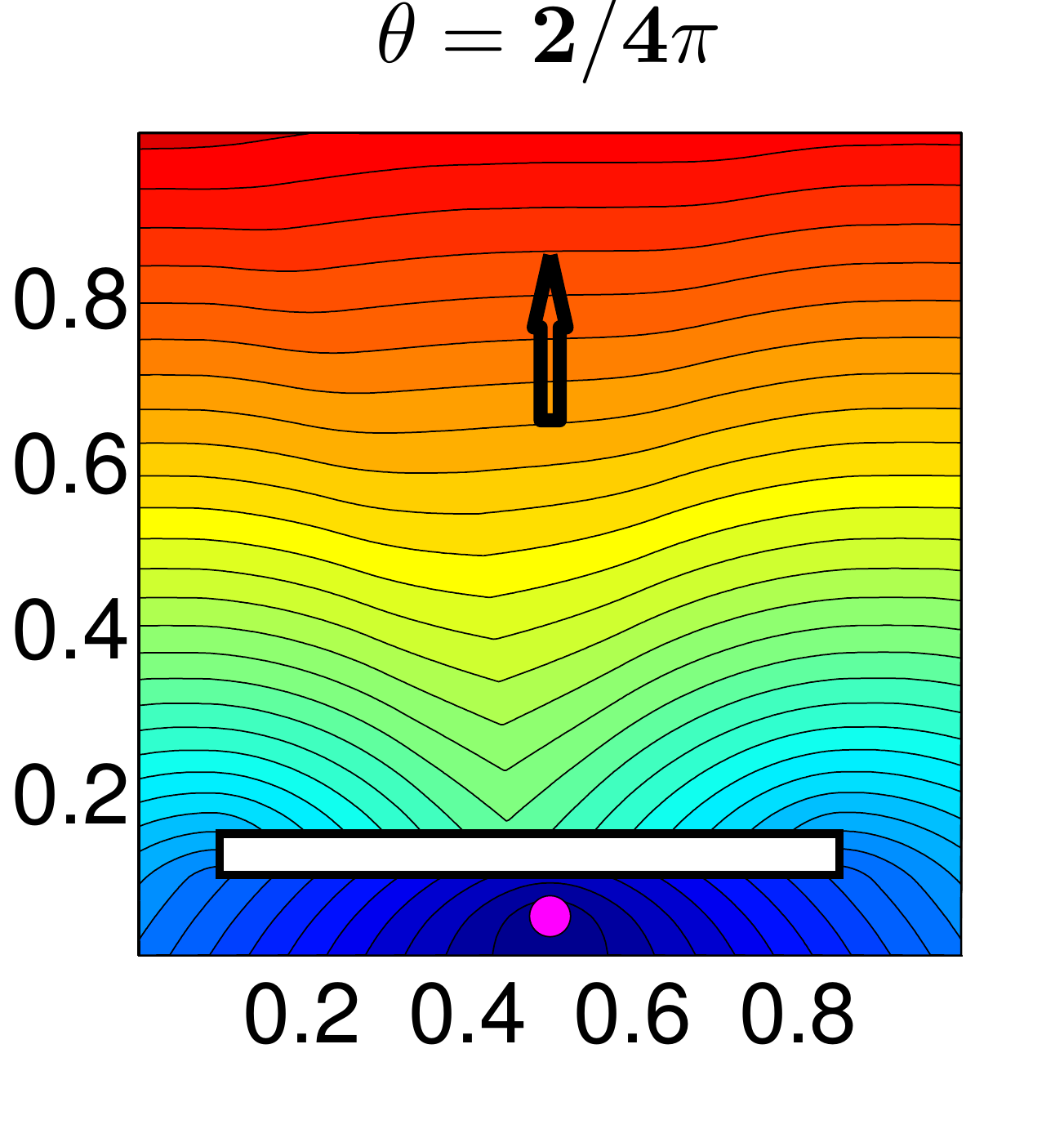}
\includegraphics[width=0.25\textwidth, height=0.25\textwidth]{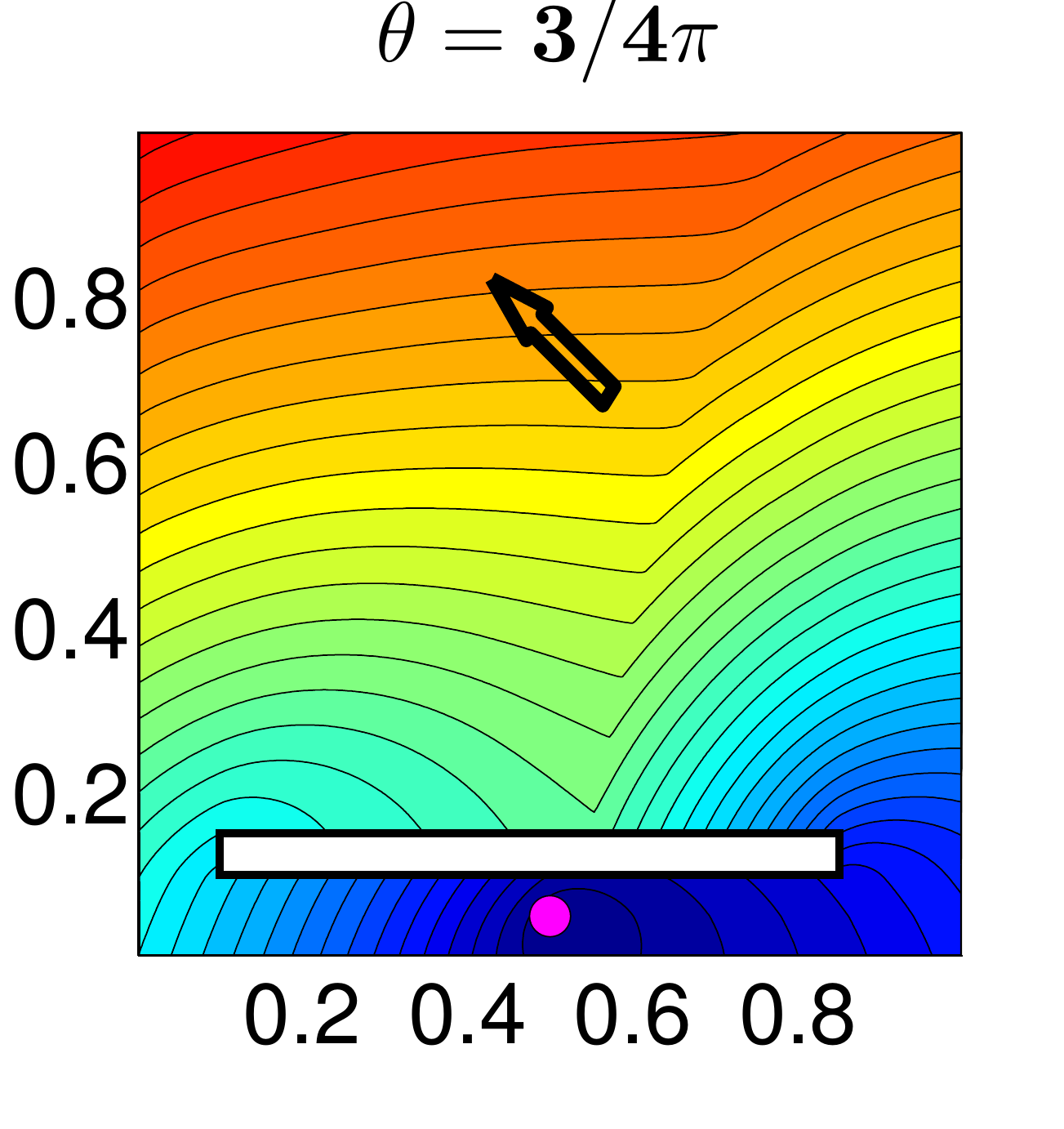}
}
\centerline{
\includegraphics[width=0.3\textwidth, height=0.01\textwidth]{Figures/blank.png}
}
\centerline{
\includegraphics[width=0.25\textwidth, height=0.25\textwidth]{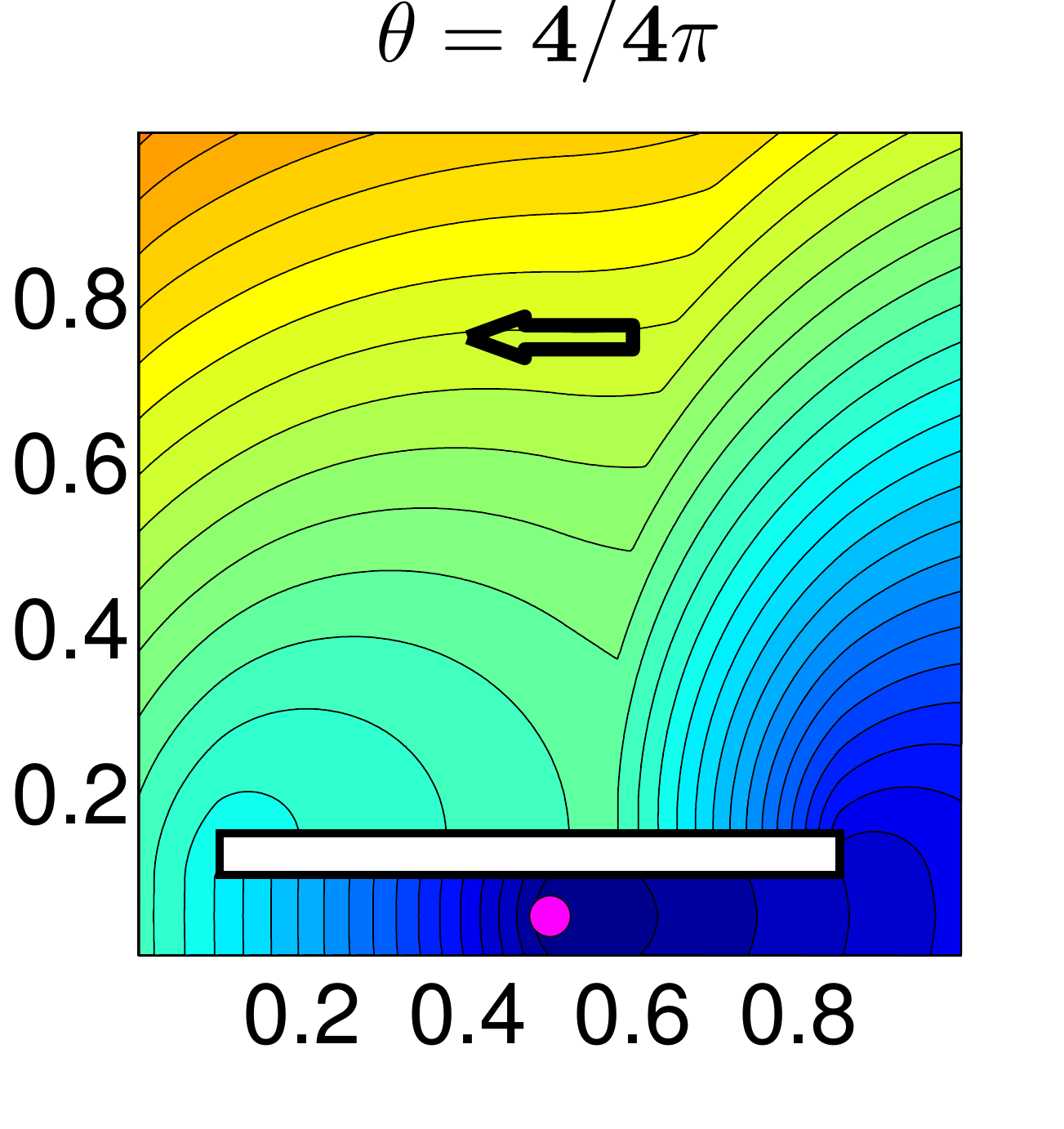}
\includegraphics[width=0.25\textwidth, height=0.25\textwidth]{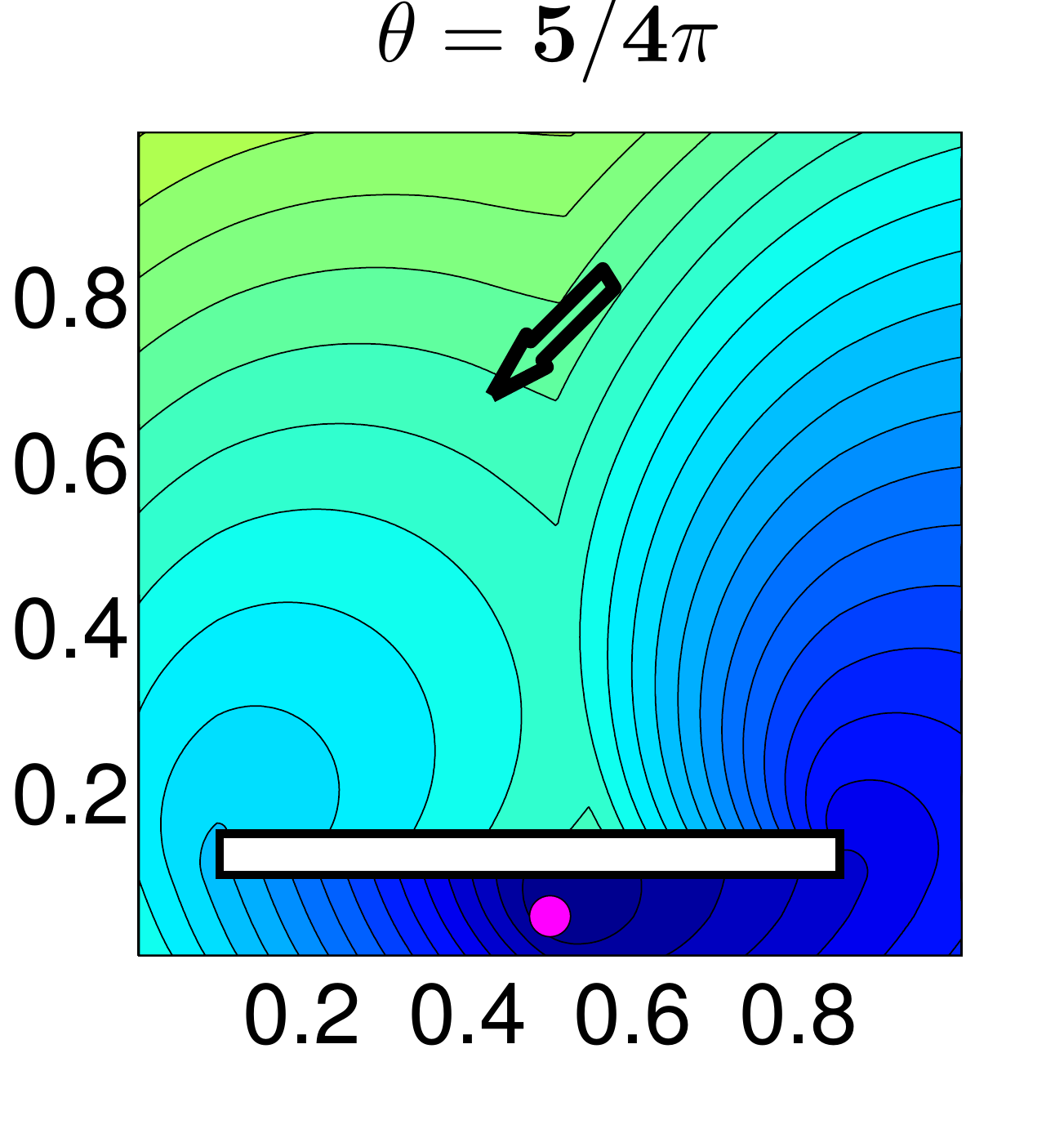}
\includegraphics[width=0.25\textwidth, height=0.25\textwidth]{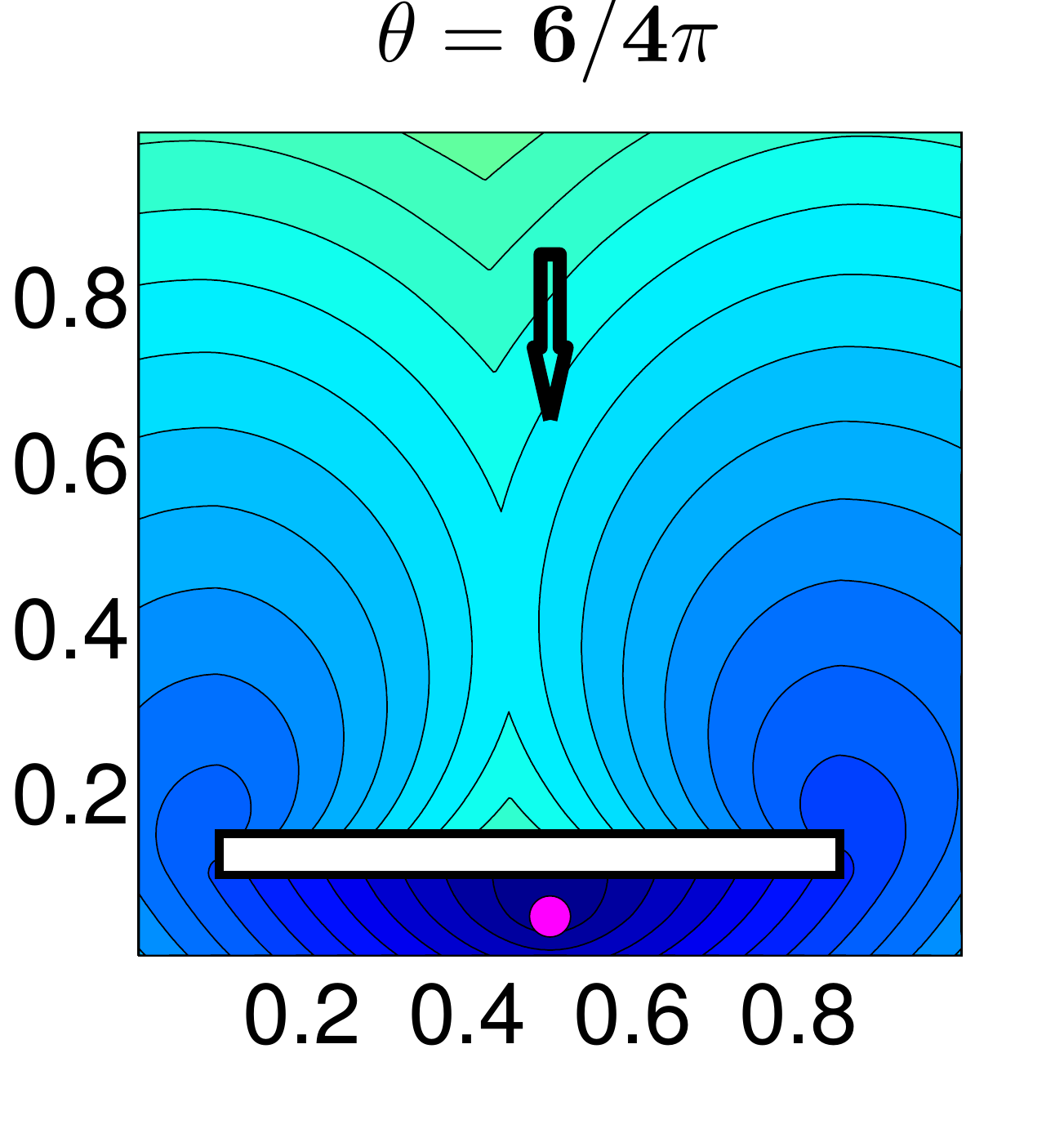}
\includegraphics[width=0.25\textwidth, height=0.25\textwidth]{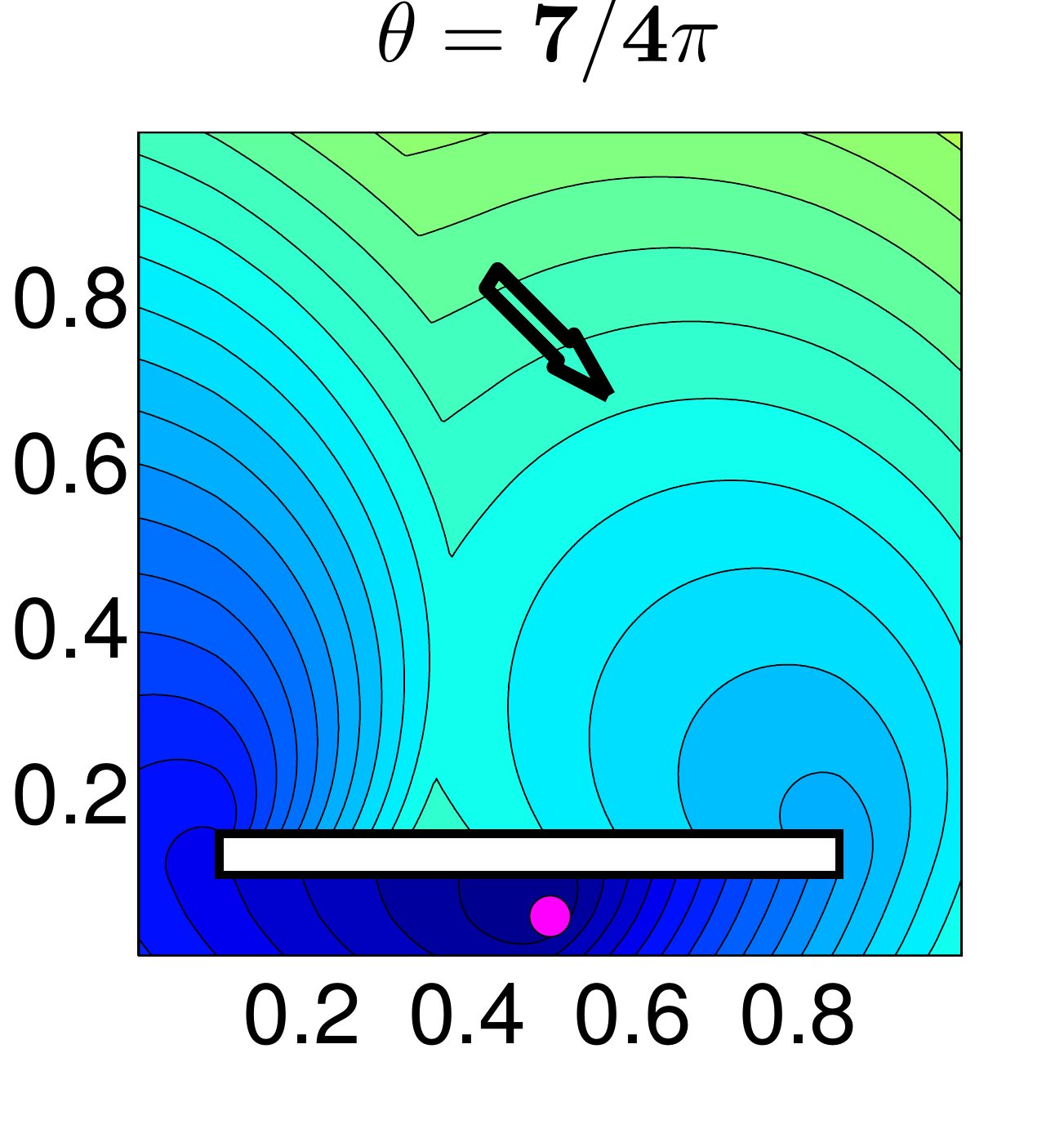}
}
\centerline{
\includegraphics[width=0.6\textwidth, height=0.2\textwidth]{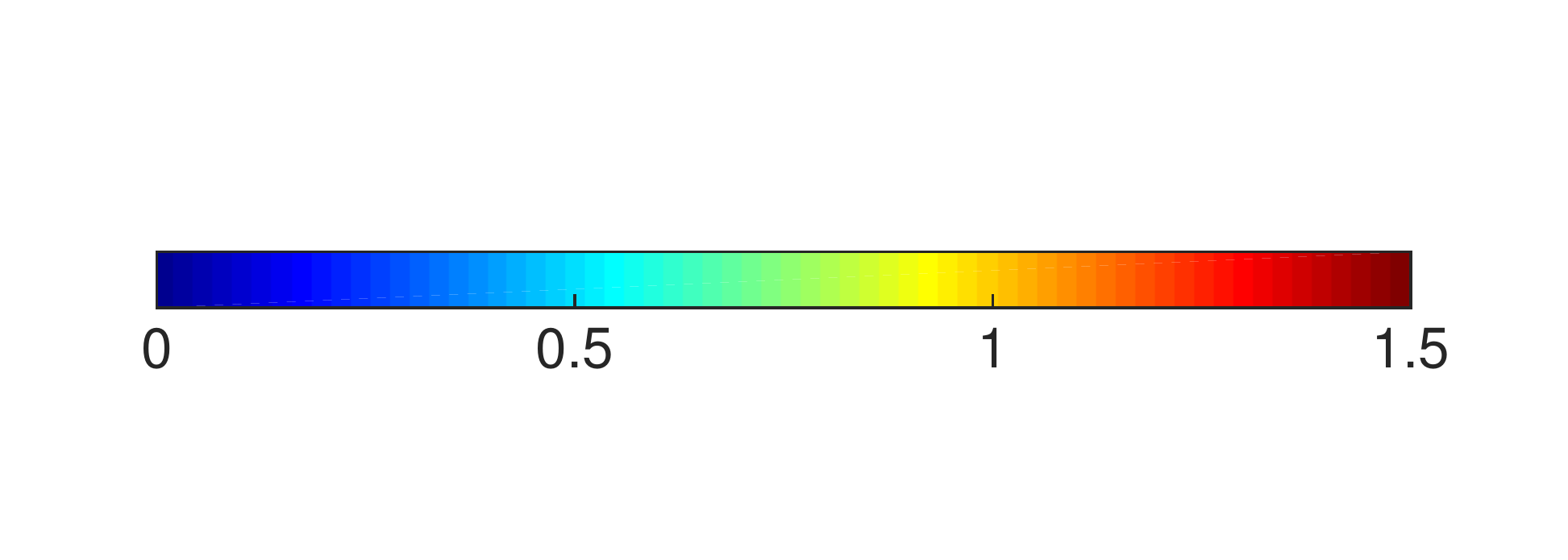}
}
\caption{
Value functions for 8 wind modes discretizing the degenerate diffusion model in section \ref{sec:BM}.
Wind directions represented by arrows.
}
\label{fig:BM}
\end{figure}

Returning to the problem of time-optimal trajectories (i.e., $C \equiv 1$),
we will assume that the rowing speed is $s(\x)=2$, while the wind speed is $s_w=1.5$ and the wind direction undergoes Brownian motion with $\sigma = 2$.  The resulting second-order HJB PDE is
$$
2 \| \nabla u \| - \w(\theta) \cdot \nabla u \; = \; 1 \, + \, 2 u_{\theta \theta},
$$
where
$
\w(\theta)
\; = \;
1.5 \left( \cos(\theta), \, \sin(\theta) \right)^T.
$
To treat this in the framework of section \ref{sec:experiments},
we discretize in $\theta$ using $n$ different wind modes
with
$$
\theta_i \, = \, \frac{2 i \pi}{n};
\qquad
\w_i
\, = \,
\w(\theta_i);
\qquad
\lambda_{ij} \, = \, \left\{
\begin{array}{rl}
\frac{n^2}{2 \pi^2}, & j\equiv i\pm 1 \mod n;\\
0, & \text{otherwise};\\
\end{array}
\right.
\qquad
i,j=0, \ldots, n-1.
$$
Figure \ref{fig:BM} presents the numerical results for $n=8$ wind modes.



\section{Conclusions}\label{sec:conclusions}
We have considered the problem of optimal path planning under uncertainty due to stochastic switching of {\em modes} in the system dynamics.
Such piecewise-deterministic models result in weakly-coupled systems of static HJB equations, and we have discussed both the discretization strategies and the iterative numerical algorithms for solving them.
Asymptotic approximations, obtained by letting the rates of switching tend to either zero or $\infty$, are less challenging computationally.
But our experimental evidence showed that the use of controls recovered from them results in significant performance degradation in non-asymptotic regimes.
The model problems used in our computational experiments are quite simple, but already reveal the complications and opportunities of taking stochastic switching into account.
Some generalizations (e.g., more modes, general anisotropic cost/dynamics, or spatially-dependent switching rates) would be trivial.  It would be both more challenging and interesting to incorporate some non-Markovian switching.

From practitioner's perspective, piecewise-deterministic models will become much more attractive once efficient numerical methods become available.
It will be interesting to see if the number of iterations can be significantly reduced despite the mode-coupling and the mode-dependence of optimal directions.
This is obviously doable for a  subclass of {\it structurally causal} switching systems:
if the modes can be re-numbered so that the transition matrix $\Lambda = (\lambda_{ij})$ becomes upper triangular,
the equations in \eqref{coupledHJB} can be de-coupled and solved one at a time.  Examples of this type are common in
some of the applications (e.g., \cite{haurie2006stochastic, haurie2005multigenerational, andrews2013deterministic}), but it is still unclear whether similar efficiency gains are attainable in the general case.
Another relevant practical task is to derive a priori upper bounds on performance degradation due
to using $\balpha_*^0$ or $\balpha_*^{\infty}$ instead of the correct  $\balpha_*.$  Practitioners could use this information to decide whether solving the coupled system is actually worthwhile.

Many features already available for the fully deterministic case would be similarly attractive in
the piecewise-deterministic setting.
This includes multi-criteria path planning \cite{KumarVlad} and dynamic domain restriction \cite{ClawsonChaconVlad}.  It will be similarly useful
to extend
the concept of {\em risk-sensitive} optimal controls, already popular in standard stochastic control problems \cite{fleming2006controlled}.
Another alternative is to optimize the expected cost with a constraint on the ``worst case scenario'' \cite{ErmonGSV12},
since the latter is well-defined in
stochastic switching problems.

\begin{appendices}


\section{Upper Bound for Mode Differences}\label{append:upperbound}
Our goal is to derive an upper bound on the difference between mode-specific value functions: $||u_i-u_j||_{\infty}$ with $i, j \in\M$.
The basic idea is that, starting from $\x$ in the mode $i$,
it is often possible to hover around $x$ long enough until we transition into a (possibly preferable) mode $j$.
We define {\em the first hitting time} $\kappa_{ij}=\min\limits_{t\geq0}\{t: m(t_0+t)=j \, | \, m(t_0)=i\}$ with $\kappa_{ii}=0, \, \forall i\in\M$, and make the following assumptions about the controlled process.

\begin{assumption}\label{assum:Cost}
The running cost $C_i(\x, \ba) \leq \bC$ for all $(\x, i, \ba) \in \Omega \times \M \times A$.
\end{assumption}

\begin{assumption}\label{assum:Lip}
$u_i(\x)$ is L-Lipschitz continuous for all $i \in \M$.
That is,
\begin{align}\label{eq:Lip}
 | u_i(\x_1) - u_i(\x_2) | \, \leq \,  L \| \x_1 - \x_2 \|, \qquad \forall \x_1, \x_2 \in \Omega, \, i \in \M
 \end{align}
\end{assumption}

\begin{assumption}\label{assum:zero_speed}
For every starting configurations $(\x,i) \in \Omega \times \M$ and every $\delta > 0$, there exists a control
$\balpha_{\delta} \in \mathcal{A}$ such that
\begin{align}
\| \y (t) - \x \| < \delta, \quad \forall t > 0,
\end{align}
where $\y$ satisfies equation \eqref{eq:ODE} with $\balpha = \balpha_{\delta}$.
\end{assumption}
\begin{theorem}\label{thm:upperbound}
If the assumptions  \ref{assum:Cost}-\ref{assum:zero_speed} hold, then
\begin{align}\label{eq:upper_bound}
|u_i(\x)-u_j(\x)| \; \leq \; \bC \, \max\{ \mathbb{E}[\kappa_{ij}], \, \mathbb{E}[\kappa_{ji}] \},  \qquad \forall i, j \in \M.
\end{align}
\end{theorem}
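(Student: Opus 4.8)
The plan is to prove the one-sided estimate $u_i(\x) - u_j(\x) \le \bC\, \mathbb{E}[\kappa_{ij}]$ separately for each ordered pair $(i,j)$; applying the same argument with the roles of $i$ and $j$ reversed gives $u_j(\x) - u_i(\x) \le \bC\, \mathbb{E}[\kappa_{ji}]$, and the two together yield the stated bound with the maximum. Since both sides vanish trivially when $\x \in Q$ (where $u_i(\x)=u_j(\x)=q(\x)$), I assume $\x \in \Omega \setminus Q$ and that $\delta$ is small enough that the closed $\delta$-ball around $\x$ avoids both $Q$ and $\partial\Omega$.

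The heart of the argument is to bound $u_i(\x)$ from above by exhibiting a single (generally suboptimal) admissible control and estimating its expected cost, formalizing the ``hover until a favorable switch'' idea. Fix $\delta>0$ and $\varepsilon>0$. Let $\balpha_\delta$ be the confining control from Assumption \ref{assum:zero_speed}, so that the trajectory started at $(\x,i)$ stays within the $\delta$-ball of $\x$, and let $\kappa_{ij}$ be the first hitting time of mode $j$. I build a composite feedback control: follow $\balpha_\delta$ on the random interval $[0,\kappa_{ij})$, and at the instant the mode first equals $j$ --- at position $\y(\kappa_{ij})$ with $\|\y(\kappa_{ij})-\x\|<\delta$ --- switch to an $\varepsilon$-optimal control for the mode-$j$ problem started from $\y(\kappa_{ij})$.

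Using the additivity of the cost along this control and conditioning on the state at $\kappa_{ij}$, the optimality of $u_i$ gives
\begin{align*}
u_i(\x) \;\le\; \mathbb{E}\!\left[\int_0^{\kappa_{ij}} C_{m(t)}\big(\y(t),\balpha_\delta\big)\,dt\right] \;+\; \mathbb{E}\big[u_j(\y(\kappa_{ij}))\big] \;+\; \varepsilon.
\end{align*}
The first term is at most $\bC\,\mathbb{E}[\kappa_{ij}]$ by Assumption \ref{assum:Cost}, since the integrand never exceeds $\bC$. The second term is at most $u_j(\x)+L\delta$ by the $L$-Lipschitz continuity of $u_j$ (Assumption \ref{assum:Lip}) together with $\|\y(\kappa_{ij})-\x\|<\delta$. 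Letting $\varepsilon \to 0$ and then $\delta \to 0$ yields $u_i(\x)-u_j(\x)\le \bC\,\mathbb{E}[\kappa_{ij}]$, completing the one-sided bound.

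The main obstacle I expect is not the two elementary estimates but the rigorous justification of the displayed inequality: one must verify that the composite control is admissible (a measurable feedback on $\Omega\times\M$), that hovering within a small ball prevents premature termination so the prohibitive terminal cost on $\partial\Omega$ is never incurred, and --- most delicately --- a dynamic programming principle at the random switching time $\kappa_{ij}$. The last point relies on the strong Markov property of the time-homogeneous switching process $m(t)$: conditioned on $m(\kappa_{ij})=j$, the post-$\kappa_{ij}$ evolution is an independent copy of the mode-$j$ problem started at $\y(\kappa_{ij})$, so its conditional expected cost is within $\varepsilon$ of $u_j(\y(\kappa_{ij}))$. Making this decomposition precise, and checking that $\mathbb{E}[\kappa_{ij}]<\infty$ so the waiting cost is finite, is where the real work lies.
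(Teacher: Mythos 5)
Your proposal is correct and follows essentially the same route as the paper's proof in Appendix \ref{append:upperbound}: hover near $\x$ via the confining control of Assumption \ref{assum:zero_speed} until the first hitting time $\kappa_{ij}$ of mode $j$, bound the accumulated cost by $\bC\,\mathbb{E}[\kappa_{ij}]$, use Lipschitz continuity to replace $u_j(\y(\kappa_{ij}))$ by $u_j(\x)+L\delta$, symmetrize, and let $\delta\to 0^+$. Your use of an $\varepsilon$-optimal continuation (rather than assuming an optimal control exists after the switch) is a minor but welcome refinement over the paper's phrasing; otherwise the arguments coincide.
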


\begin{proof}
Starting from $\x\in\Omega$ in mode $i$, we choose a number $\delta > 0$ small enough such that $\delta < dist(\x, Q)$,
and introduce the following hybrid strategy (denoted $\tilde{\balpha}_{\delta}$).
We use $\balpha_{\delta}(\x, m(t))$ described in assumption \ref{assum:zero_speed} until the mode switches to $j$
at some point $\tilde{\x} = \y(\kappa_{ij})$.  From there on, we switch to the optimal strategy corresponding to $(\tilde{\x},j)$.

 While $t \leq \kappa_{ij}$, we have $\| \y(t) - \x \| < \delta < dist(\x, Q)$; so the trajectory will not reach $Q$ before the mode switches to $j$.
 Therefore,
 \begin{align}\label{eq:ui_less_than}
 u_i(\x) \; \leq \;
 \mathbb{E}\left[ J(\x, i, \tilde{\balpha}_{\delta}) \right]
\;  \leq \;
 \bC\, \mathbb{E} \left[ \kappa_{ji} \right] \, + \,
  \mathbb{E}\left[ J(\tilde{\x}, j, \tilde{\balpha}_{\delta}) \right]
\;  =  \;
 \bC \, \mathbb{E} \left[ \kappa_{ij} \right] \, + \,  u_j(\tilde{\x}) .
  \end{align}
Since $\|\tilde{\x} - \x\| \; < \; \delta$, inequalities \eqref{eq:Lip} and \eqref{eq:ui_less_than} yield
 \begin{align}\label{eq:i-j}
u_i(\x) \; \leq \; \bC \, \mathbb{E} \left[ \kappa_{ij} \right] \, + \,  u_j(\x)  \, + \, L\delta.
 \end{align}
Reversing the roles of $i$ and $j$, we also have
\begin{align}\label{eq:j-i}
u_j(\x) \; \leq \; \bC \, \mathbb{E} \left[ \kappa_{ji} \right] \, + \,  u_i(\x)  \, + \, L\delta.
 \end{align}
 To finish the proof, we combine \eqref{eq:i-j} and \eqref{eq:j-i}, and then let  $\delta \to 0^+$.
\end{proof}

Suppose the transition rate matrix is $c\Lambda$ for $c>0$ and a fixed irreducible matrix $\Lambda$.
If we define
$
 \rho_{ij} \, = \, \max \{ \mathbb{E}[\kappa_{ij}], \, \mathbb{E}[\kappa_{ji}] \},
$
a standard argument from the theory of Markov processes \cite{norris1998markov} shows that $\rho_{ij} \propto c^{-1}$.
Therefore, if we send $c \to \infty$,
$|u_i(\x) - u_j(\x)| = O(c^{-1})$
uniformly in $\Omega$ for all $i\neq j$.

\end{appendices}


\bibliographystyle{plain}
\bibliography{tex/Ref}

\begin{thebibliography}{10}

\bibitem{akella1986optimal}
Ramakrishna Akella and P~Roshan Kumar.
\newblock Optimal control of production rate in a failure prone manufacturing
  system.
\newblock {\em Automatic Control, IEEE Transactions on}, 31(2):116--126, 1986.

\bibitem{alton2012ordered}
Ken Alton and Ian~M Mitchell.
\newblock An ordered upwind method with precomputed stencil and monotone node
  acceptance for solving static convex hamilton-jacobi equations.
\newblock {\em Journal of Scientific Computing}, 51(2):313--348, 2012.

\bibitem{andrews2013deterministic}
June Andrews and Alexander Vladimirsky.
\newblock Deterministic control of randomly-terminated processes.
\newblock {\em Interfaces and Free Boundaries}, 16(1):1--40, 2014.

\bibitem{bak2010some}
Stanley Bak, Joyce McLaughlin, and Daniel Renzi.
\newblock Some improvements for the fast sweeping method.
\newblock {\em SIAM Journal on Scientific Computing}, 32(5):2853--2874, 2010.

\bibitem{bardi2008optimal}
Martino Bardi and Italo Capuzzo-Dolcetta.
\newblock {\em Optimal control and viscosity solutions of
  Hamilton-Jacobi-Bellman equations}.
\newblock Springer Science \& Business Media, 2008.

\bibitem{barles1991convergence}
Guy Barles and Panagiotis~E Souganidis.
\newblock Convergence of approximation schemes for fully nonlinear second order
  equations.
\newblock In {\em Asymptotic analysis}, number~3, pages 271--283, 1991.

\bibitem{barron2008infinity}
E~Barron, L~Evans, and R~Jensen.
\newblock The infinity laplacian, aronsson's equation and their
  generalizations.
\newblock {\em Transactions of the American Mathematical Society},
  360(1):77--101, 2008.

\bibitem{bensoussan2000stochastic}
Alain Bensoussan and Jose~Luis Menaldi.
\newblock Stochastic hybrid control.
\newblock {\em Journal of Mathematical Analysis and Applications},
  249(1):261--288, 2000.

\bibitem{bielecki1988optimality}
T~Bielecki and PR~Kumar.
\newblock Optimality of zero-inventory policies for unreliable manufacturing
  systems.
\newblock {\em Operations research}, 36(4):532--541, 1988.

\bibitem{bornemann2006}
Folkmar Bornemann and Christian Rasch.
\newblock Finite-element discretization of static hamilton-jacobi equations
  based on a local variational principle.
\newblock {\em Computing and Visualization in Science}, 9(2):57--69, 2006.

\bibitem{boue1999}
Michelle Bou\'{e} and Paul Dupuis.
\newblock Markov chain approximations for deterministic control problems with
  affine dynamics and quadratic cost in the control.
\newblock {\em SIAM Journal on Numerical Analysis}, 36(3):667--695, 1999.

\bibitem{boukas1990optimal}
El~K{\'e}bir Boukas, Alain Haurie, and Philippe Michel.
\newblock An optimal control problem with a random stopping time.
\newblock {\em Journal of optimization theory and applications},
  64(3):471--480, 1990.

\bibitem{bujorianu2004general}
Manuela~L Bujorianu and John Lygeros.
\newblock General stochastic hybrid systems: Modelling and optimal control.
\newblock In {\em Decision and Control, 2004. CDC. 43rd IEEE Conference on},
  volume~2, pages 1872--1877. IEEE, 2004.

\bibitem{chacon2012fast}
Adam Chacon and Alexander Vladimirsky.
\newblock Fast two-scale methods for eikonal equations.
\newblock {\em SIAM Journal on Scientific Computing}, 34(2):A547--A578, 2012.

\bibitem{chacon2015parallel}
Adam Chacon and Alexander Vladimirsky.
\newblock A parallel two-scale method for eikonal equations.
\newblock {\em SIAM Journal on Scientific Computing}, 37(1):A156--A180, 2015.

\bibitem{ClawsonChaconVlad}
Z.~Clawson, A.~Chacon, and A.~Vladimirsky.
\newblock Causal domain restriction for eikonal equations.
\newblock {\em SIAM Journal on Scientific Computing}, 36(5):A2478--A2505, 2014.

\bibitem{crandall1981viscosity}
Michael~G Crandall and Pierre-Louis Lions.
\newblock Viscosity solutions of hamilton-jacobi equations.
\newblock Technical report, DTIC Document, 1981.

\bibitem{cristiani2015modeling}
Emiliano Cristiani, Fabio~S Priuli, and Andrea Tosin.
\newblock Modeling rationality to control self-organization of crowds: an
  environmental approach.
\newblock {\em SIAM Journal on Applied Mathematics}, 75(2):605--629, 2015.

\bibitem{dahiya2013characteristic}
D~Dahiya, S~Baskar, and F~Coulouvrat.
\newblock Characteristic fast marching method for monotonically propagating
  fronts in a moving medium.
\newblock {\em SIAM Journal on Scientific Computing}, 35(4):A1880--A1902, 2013.

\bibitem{davis1986control}
MHA Davis.
\newblock Control of piecewise-deterministic processes via discrete-time
  dynamic programming.
\newblock In {\em Stochastic Differential Systems}, pages 140--150. Springer,
  1986.

\bibitem{ErmonGSV12}
Stefano Ermon, Carla~P. Gomes, Bart Selman, and Alexander Vladimirsky.
\newblock Probabilistic planning with non-linear utility functions and
  worst-case guarantees.
\newblock In Wiebe van~der Hoek, Lin Padgham, Vincent Conitzer, and Michael
  Winikoff, editors, {\em AAMAS}, pages 965--972. IFAAMAS, 2012.

\bibitem{evans1989perturbed}
Lawrence~C Evans.
\newblock The perturbed test function method for viscosity solutions of
  nonlinear pde.
\newblock {\em Proceedings of the Royal Society of Edinburgh: Section A
  Mathematics}, 111(3-4):359--375, 1989.

\bibitem{falcone1994minimum}
M~Falcone.
\newblock The minimum time problem and its applications to front propagation.
\newblock {\em Motion by mean curvature and related topics (Trento, 1992), de
  Gruyter, Berlin}, pages 70--88, 1994.

\bibitem{falcone2013semi}
Maurizio Falcone and Roberto Ferretti.
\newblock {\em Semi-Lagrangian approximation schemes for linear and
  Hamilton-Jacobi equations}.
\newblock SIAM, 2013.

\bibitem{fleming2006controlled}
Wendell~H Fleming and Halil~Mete Soner.
\newblock {\em Controlled Markov processes and viscosity solutions}, volume~25.
\newblock Springer Science \& Business Media, 2006.

\bibitem{gillberg2012new}
Tor Gillberg, Mohammed Sourouri, and Xing Cai.
\newblock A new parallel 3d front propagation algorithm for fast simulation of
  geological folds.
\newblock {\em Procedia Computer Science}, 9:947--955, 2012.

\bibitem{haurie2005multigenerational}
Alain Haurie.
\newblock A multigenerational game model to analyze sustainable development.
\newblock {\em Annals of Operations Research}, 137(1):369--386, 2005.

\bibitem{haurie2006stochastic}
Alain Haurie and Francesco Moresino.
\newblock A stochastic control model of economic growth with environmental
  disaster prevention.
\newblock {\em Automatica}, 42(8):1417--1428, 2006.

\bibitem{jeong2007interactive}
Won-Ki Jeong, P~Thomas Fletcher, Ran Tao, and Ross~T Whitaker.
\newblock Interactive visualization of volumetric white matter connectivity in
  dt-mri using a parallel-hardware hamilton-jacobi solver.
\newblock {\em Visualization and Computer Graphics, IEEE Transactions on},
  13(6):1480--1487, 2007.

\bibitem{kao2005fast}
Chiu-Yen Kao, Stanley Osher, and Yen-Hsi Tsai.
\newblock Fast sweeping methods for static hamilton--jacobi equations.
\newblock {\em SIAM journal on numerical analysis}, 42(6):2612--2632, 2005.

\bibitem{KumarVlad}
A.~Kumar and A.~Vladimirsky.
\newblock An efficient method for multiobjective optimal control and optimal
  control subject to integral constraints.
\newblock {\em Journal of Computational Mathematics}, 28(4):517--551, 2010.

\bibitem{mirebeau2014efficient}
Jean-Marie Mirebeau.
\newblock Efficient fast marching with finsler metrics.
\newblock {\em Numerische Mathematik}, 126(3):515--557, 2014.

\bibitem{norris1998markov}
James~R Norris.
\newblock {\em Markov chains}.
\newblock Number 2008. Cambridge University Press, 1998.

\bibitem{olsder1980time}
GJ~Olsder and R~Suri.
\newblock Time-optimal control of parts-routing in a manufacturing system with
  failure-prone machines.
\newblock In {\em Decision and Control including the Symposium on Adaptive
  Processes, 1980 19th IEEE Conference on}, pages 722--727. IEEE, 1980.

\bibitem{qian2007fast}
Jianliang Qian, Yong-Tao Zhang, and Hong-Kai Zhao.
\newblock A fast sweeping method for static convex hamilton--jacobi equations.
\newblock {\em Journal of Scientific Computing}, 31(1):237--271, 2007.

\bibitem{sethi2012hierarchical}
Suresh~P Sethi and Qing Zhang.
\newblock {\em Hierarchical decision making in stochastic manufacturing
  systems}.
\newblock Springer Science \& Business Media, 2012.

\bibitem{sethian1996fast}
James~A Sethian.
\newblock A fast marching level set method for monotonically advancing fronts.
\newblock {\em Proceedings of the National Academy of Sciences},
  93(4):1591--1595, 1996.

\bibitem{sethian2001ordered}
James~A Sethian and Alexander Vladimirsky.
\newblock Ordered upwind methods for static hamilton--jacobi equations.
\newblock {\em Proceedings of the National Academy of Sciences},
  98(20):11069--11074, 2001.

\bibitem{sethian2003ordered}
James~A Sethian and Alexander Vladimirsky.
\newblock Ordered upwind methods for static hamilton--jacobi equations: Theory
  and algorithms.
\newblock {\em SIAM Journal on Numerical Analysis}, 41(1):325--363, 2003.

\bibitem{verms1985optimal}
D~Verms.
\newblock Optimal control of piecewise deterministic markov process.
\newblock {\em Stochastics: An International Journal of Probability and
  Stochastic Processes}, 14(3):165--207, 1985.

\bibitem{yershov2013simplicial}
Dmitry~S Yershov and Steven~M LaValle.
\newblock Simplicial label correcting algorithms for continuous stochastic
  shortest path problems.
\newblock In {\em Robotics and Automation (ICRA), 2013 IEEE International
  Conference on}, pages 5062--5067. IEEE, 2013.

\bibitem{zhao2005fast}
Hongkai Zhao.
\newblock A fast sweeping method for eikonal equations.
\newblock {\em Mathematics of computation}, 74(250):603--627, 2005.

\bibitem{zlobec2001zermelo}
Sanjo Zlobec.
\newblock Zermelo's navigation problems.
\newblock In {\em Stable Parametric Programming}, pages 213--223. Springer,
  2001.

\end{thebibliography}

\end{document}